\newtheorem{theorem}{Theorem}[section]
\newtheorem{lemma}[theorem]{Lemma}
\newtheorem{corollary}[theorem]{Corollary}
\newtheorem{proposition}[theorem]{Proposition}
\theoremstyle{definition}
\newtheorem{note}[theorem]{Note}
\let\dotlessi=\i
\theoremstyle{remark}
\begin{document}

\title[Self-reciprocal functions and modular transformations]
{Self-reciprocal functions, powers of the {R}iemann zeta function and 
modular-type transformations}

\author{Atul Dixit}
\address{Department of Mathematics,
Tulane University, New Orleans, LA 70118}
\email{adixit@tulane.edu}

\author{Victor H. Moll}
\address{Department of Mathematics,
Tulane University, New Orleans, LA 70118}
\email{vhm@tulane.edu}

\subjclass[2010]{Primary 11M06, Secondary 33C05}

\date{\today}

\keywords{Riemann {$\Xi$}-function, Bessel functions, integral identities, 
Koshlyakov, Ramanujan}

\begin{abstract}
The classical transformation of Jacobi's theta function admits a simple proof
by producing an integral representation that yields this invariance apparent.
This idea seems to have first appeared in the work of S. Ramanujan. 
Several examples of this idea have been produced by Koshlyakov, Ferrar,
Guinand, Ramanujan and others. A unifying procedure to analyze these examples 
and natural generalizations is presented.
\end{abstract}

\maketitle

\newcommand{\ba}{\begin{eqnarray}}
\newcommand{\ea}{\end{eqnarray}}
\newcommand{\ift}{\int_{0}^{\infty}}
\newcommand{\nn}{\nonumber}
\newcommand{\no}{\noindent}
\newcommand{\lf}{\left\lfloor}
\newcommand{\rf}{\right\rfloor}
\newcommand{\realpart}{\mathop{\rm Re}\nolimits}
\newcommand{\imagpart}{\mathop{\rm Im}\nolimits}

\newcommand{\op}[1]{\ensuremath{\operatorname{#1}}}
\newcommand{\pFq}[5]{\ensuremath{{}_{#1}F_{#2} \left( \genfrac{}{}{0pt}{}{#3}
{#4} \bigg| {#5} \right)}}

\newtheorem{Definition}{\bf Definition}[section]
\newtheorem{Thm}[Definition]{\bf Theorem}
\newtheorem{Example}[Definition]{\bf Example}
\newtheorem{Lem}[Definition]{\bf Lemma}
\newtheorem{Cor}[Definition]{\bf Corollary}
\newtheorem{Prop}[Definition]{\bf Proposition}
\numberwithin{equation}{section}

\section{Introduction}
\label{sec-intro}

In his approach to the theory of elliptic functions, C.~G.~J.~Jacobi 
\cite{jacobi-1829a} introduced his classical theta function
\begin{equation}
\vartheta_{3}(x,\omega) = \sum_{n = -\infty}^{\infty} 
e^{2 \pi i n x + \pi i n^{2} \omega}
\end{equation}
\noindent
and three other similar functions $\vartheta_{1}, \, \vartheta_{2}, \, 
\vartheta_{4}$. These are entire functions of $x \in \mathbb{C}$, so they 
cannot be doubly-periodic, but every elliptic function can be written in 
terms of them. The transformations of the so-called \textit{null-values}
$\vartheta_{1}'(0,\omega), \, \vartheta_{j}(0,\omega)$ for $2 \leq j \leq 4$
under the modular group $PSL(2, \mathbb{Z})$ are of intrinsic 
interest.  Jacobi proved that the transformation of $\vartheta_{3}(0,it)$ for Re $t>0$,
yields the pretty identity
\begin{equation}
\sum_{n=-\infty}^{\infty} e^{-\pi n^{2}t} = \frac{1}{\sqrt{t}}
\sum_{n=-\infty}^{\infty} e^{-\pi n^{2}/t}.
\end{equation}
\noindent
The reader will find details in \cite[Chapter 3]{mckean-1997a}. This identity
may be written in a more symmetric form as
\begin{equation}
\sqrt{\alpha} \left( \frac{1}{2 \alpha} - \sum_{n=1}^{\infty} 
e^{-\pi \alpha^{2} n^{2}} \right) = 
\sqrt{\beta} \left( \frac{1}{2 \beta} - \sum_{n=1}^{\infty} 
e^{-\pi \beta^{2} n^{2}} \right),
\label{jacobi-1}
\end{equation}
\noindent
with $\alpha, \, \beta > 0$ and $\alpha \beta = 1$. These type of 
identities are called modular-type transformations.
%
A procedure to establish the identity \eqref{jacobi-1} is to produce an
integral representation of one of the sides that is invariant under
$\alpha \to 1/\alpha$. To see this, define 
\begin{equation}
\Xi(t) = \xi \left( \tfrac{1}{2} + i t \right),
\end{equation}
\noindent
where $\xi$ is the Riemann $\xi$-function
\begin{equation}
\xi(s) = \tfrac{1}{2}s(s-1)\pi^{-s/2} \Gamma \left( \tfrac{s}{2} \right) 
 \, \zeta(s),
\label{def-xi}
\end{equation}
\noindent 
with 
\begin{equation}
\Gamma(s) = \int_{0}^{\infty} t^{s-1} e^{-t} \, dt \text{ and }
\zeta(s) = \sum_{n=1}^{\infty} \frac{1}{n^{s}},
\label{def-gamma-00}
\end{equation}
\noindent
the gamma and Riemann zeta function, respectively. The definition of $\Gamma(s)$ in 
\eqref{def-gamma-00} is valid for $\realpart{s} > 0$ and that for $\zeta(s)$ is valid for $\realpart{s}>1$. These functions
are then extended to $\mathbb{C}$ by meromorphic continuation, with poles  
at $s = 0, \, -1, \, -2, \ldots$ for $\Gamma(s)$, and at $s=1$ for $\zeta(s)$. The integral evaluation 
\cite[p. 36]{paris-2001a}
\begin{equation}
\frac{2}{\pi} \int_{0}^{\infty} \frac{\Xi(t/2)}{1+t^{2}} 
\cos( \tfrac{1}{2} t \log \alpha) \, dt = 
\sqrt{\alpha} \left( \frac{1}{2 \alpha} - \sum_{n=1}^{\infty} 
e^{-\pi \alpha^{2} n^{2}} \right) 
\label{integ-11}
\end{equation}
\noindent
and the obvious invariance of the left-hand side under $\alpha \to 1/\alpha$
gives \eqref{jacobi-1}.

\smallskip

The goal of this paper is to study, in a unified manner, a variety of 
integrals of the form 
\begin{equation}
\label{int-Igen}
I(f,z; \alpha) = \int_{0}^{\infty} f \left(z, \tfrac{t}{2} \right) 
\Xi \left( \tfrac{t  - i z}{2} \right) \, 
\Xi \left( \tfrac{t  + i z}{2} \right) \, 
\cos \left( \tfrac{1}{2} t \, \log \alpha 
\right) \, dt,
\end{equation}
\noindent
where $f(z,t)$ is an even function of $t$ of the form 
\begin{equation}
f(z, t) = \phi(z, i t) \phi(z, - i t ),\label{factorization-0}
\end{equation}
with $\phi$ analytic as a function of $t \in \mathbb{R}$ and 
$z \in \mathbb{C}$. The integral \eqref{int-Igen} extends
\begin{equation}
\label{int-I}
I(f; \alpha) = \int_{0}^{\infty} f \left( \tfrac{t}{2} \right) 
\Xi\left( \tfrac{t}{2} \right)
\cos \left( \tfrac{1}{2} t \, \log \alpha 
\right) \, dt,
\end{equation}
\noindent
with
\begin{equation}
f(t) = \varphi(i t) \varphi(- i t ),
\label{factorization-1}
\end{equation}
and $\varphi$ is analytic in $t$. 
Particular examples of this latter integral were studied by Ramanujan, Hardy, 
Koshlyakov and Ferrar (see also \cite[p. 35]{titchmarsh-1986a}), 
whereas Ramanujan \cite{ramanujan-1915a} was the first person to study an integral of the type in \eqref{int-Igen}.
\noindent
It is clear that \eqref{int-Igen} and \eqref{int-I}
are invariant under $\alpha \to 1/\alpha$. An alternative 
expression (series or integral) of 
$I(f,\alpha)$ then yields identities similar to \eqref{jacobi-1}. These 
are called \textit{modular-type transformations}. It is easy to extend
these identities by analytic continuation to 
$\alpha, \, \beta \in \Omega \subset \mathbb{C}$, with 
$\mathbb{R} \subset \Omega$. The 
classical example in \eqref{jacobi-1} corresponds to taking $f(t) = 1/(1+4t^{2})$ in \eqref{int-I}. 

The identities obtained here have the form 
\begin{equation}
\label{method-1}
F(z, \alpha) = F(z, \beta) = \int_{0}^{\infty} f \left(z, \tfrac{t}{2} \right) 
\Xi \left( \tfrac{t  - i z}{2} \right) \, 
\Xi \left( \tfrac{t  + i z}{2} \right) \, 
\cos \left( \tfrac{1}{2} t \, \log \alpha 
\right) \, dt,
\end{equation}
\noindent
for a suitably chosen function $f$. The parameters $\alpha$ and 
$\beta$ are positive and satisfy $\alpha \beta  = 1$. Naturally, if the 
identity 
\begin{equation}
\label{method-2}
F(z, \alpha) = \int_{0}^{\infty} G(z, t) \cos \left( \tfrac{1}{2} t 
\, \log \alpha \right) \, dt,
\end{equation}
\noindent
has been established, it follows immediately that $F(z, \alpha) = F(z, \beta)$. 

\smallskip

Section \ref{sec-illustrative} introduces a technique for studying integrals 
in \eqref{int-I} through an example 
containing $K_{0}(x)$, the modified Bessel function of order $0$. Section \ref{sec-ramanujan} 
describes an example found in Ramanujan's work and links it to 
another of his integrals. This illustrates the fundamental idea of this paper. 
In Section  \ref{sec-param-intro}, some classical formulas are
generalized by the introduction of a new 
parameter $z$. This section states the results with the proofs
presented in Section \ref{sec-proofs}. Generalizations of some results of 
Koshlyakov, particularly dealing with his function $\Omega(x)$ 
(see (\ref{koshomega}) below), are stated in Section \ref{sec-omega}. 
Section \ref{sec-advantages} discusses advantages of our methods over 
those of A.P.~Guinand and C.~Nasim. Finally, the last section describes future 
directions of the work discussed here.

\section{An illustrative example}
\label{sec-illustrative}

This section presents a general technique to evaluate integrals of the 
type \eqref{int-I}. This is illustrated with an example 
established by N.~S.~Koshlyakov 
\cite{koshlyakov-1929a}. The proof given here follows \cite{dixit-2010b} and 
\cite{ferrar-1936a}. 

The function 
\begin{equation}
f(t) = \frac{4\Xi(t)}{(\tfrac{1}{4}+t^{2})^{2}}
\label{factor-2}
\end{equation}
\noindent
admits a factorization in the form \eqref{factorization-1} with 
\begin{equation}\label{factor-3}
\varphi(s) = \frac{2 \sqrt{ \xi \left( \tfrac{1}{2} - s \right)}}
{ \left( \tfrac{1}{2} + s \right) \left( \tfrac{1}{2} - s \right)}.
\end{equation}
\noindent
It is known \cite[p.~35]{titchmarsh-1986a} that
\begin{equation}
\int_{0}^{\infty} f \left( \tfrac{t}{2} \right) 
\Xi \left( \tfrac{t}{2} \right) 
\, \cos \left(\tfrac{1}{2}t\log\alpha\right) \, dt = 
\frac{1}{i \sqrt{y}}
\int_{\tfrac{1}{2} - i \infty}^{\tfrac{1}{2} + i \infty} 
\varphi \left( s - \tfrac{1}{2} \right) 
\varphi \left( \tfrac{1}{2} -s \right) 
\xi(s) \alpha^{s} \, ds
\end{equation}
\noindent
Using \eqref{factor-2} and \eqref{factor-3}, this yields 
\begin{equation}
\label{line-integral1}
\int_{0}^{\infty} \frac{ 64 \, \Xi^{2} \left( \tfrac{t}{2} \right) \, 
\cos( \tfrac{1}{2}t \log \alpha ) }{(1+t^{2})^{2}} \, dt =
 \frac{1}{i \sqrt{y}}
\int_{\tfrac{1}{2} - i \infty}^{\tfrac{1}{2} + i \infty} 
\Gamma^{2}\left( \tfrac{s}{2} \right) \zeta^{2}(s) \left( \frac{\alpha}{\pi} 
\right)^{s} \, ds.
\end{equation}
\noindent
To evaluate the contour integral on the right-hand side, square the 
functional equation for $\zeta(s)$
\begin{equation}
\pi^{-s/2} \Gamma \left( \frac{s}{2} \right) \zeta(s) = 
\pi^{-(1-s)/2} \Gamma \left( \frac{1-s}{2} \right) \zeta(1-s),
\label{functional-zeta}
\end{equation}
\noindent
and recall the Dirichlet series 
\begin{equation}
\zeta^{2}(s) = \sum_{n=1}^{\infty} \frac{d(n)}{n^{s}},
\label{dirichlet-1}
\end{equation}
\noindent
where $d(n)$ is the number of divisors of $n$; 
see \cite[page 229]{apostol-1976a}. The expansion \eqref{dirichlet-1} is valid 
for $\realpart{s} > 1$, so it is necessary to move the line of integration in 
\eqref{line-integral1} to $\realpart{s} = 1+ \delta$, for some 
$\delta > 0$. This process captures a pole of the integrand at $s=1$, with 
residue 
\begin{equation}
\lim\limits_{s \to 1} \frac{d}{ds} 
\left( (s-1)^{2} \left( \frac{\alpha}{\pi} \right)^{s} 
\Gamma^{2} \left( \frac{s}{2} \right) \zeta^{2}(s) \right) = 
\alpha \left( \gamma - \log \left( \frac{4 \pi}{\alpha} \right) \right),
\end{equation}
\noindent
where $\gamma = - \Gamma'(1)$ is the Euler's constant. Then 
\begin{equation*}
\int_{1+ \delta - i \infty}^{1+ \delta + i \infty} 
\Gamma^{2} \left( \frac{s}{2} \right) \zeta^{2}(s) \left( \frac{y}{\pi} 
\right)^{s} ds = 
\sum_{m=1}^{\infty} d(m) 
\int_{1+ \delta - i \infty}^{1+ \delta + i \infty} 
\Gamma^{2} \left( \frac{s}{2} \right) \left( \frac{\pi m}{y} \right)^{-s} ds.
\end{equation*}
\noindent
The integral on the right-hand side appears in 
\cite[p.115, formula 11.1]{oberhettinger-1974a}
\begin{equation}
\frac{1}{2 \pi i} \int_{c - i \infty}^{c + i \infty} 
2^{s-2} a^{-s} \Gamma\left( \frac{s}{2} - \frac{\nu}{2} \right) 
\Gamma\left( \frac{s}{2} + \frac{\nu}{2} \right) x^{-s} \, ds = 
K_{\nu}(ax),\label{mellin-KBessel}
\end{equation}
\noindent
valid for Re $s>\pm $ Re $\nu$ and $a>0$. Here $K_{\nu}(w)$ is the modified Bessel function of order $\nu$, defined by \cite[p.~928, formula \text{8.485}]{gradshteyn-2007a}
\begin{equation}
K_{\nu}(w) = \frac{\pi}{2} \, 
\frac{ \left( I_{-\nu}(w) - I_{\nu}(w) \right)}{\sin \pi \nu} 
\end{equation}
\noindent
where \cite[p.~911, formula 8.406, nos. 1-2]{gradshteyn-2007a}
\begin{equation}\label{defbesi}
I_{\nu}(w)=
\begin{cases}
e^{-\frac{\pi}{2}\nu i}J_{\nu}(e^{\frac{\pi i}{2}}w) & \text{if $-\pi<$ arg $w\leq\frac{\pi}{2}$,}\\
e^{\frac{3}{2}\pi\nu i}J_{\nu}(e^{-\frac{3}{2}\pi i}w) & \text{if $\frac{\pi}{2}<$ arg $w\leq \pi$,}
\end{cases}
\end{equation}
and \cite[p.~910, formula 8.402]{gradshteyn-2007a}
\begin{equation}\label{defbesj}
J_{\nu}(w):=\left(\frac{w}{2}\right)^{\nu}\sum_{n=0}^{\infty}\frac{(-w^2/4)^n}{\Gamma(\nu+1+n)n!}
\end{equation}
is the Bessel function of the first kind. The 
result is an integral of type \eqref{int-I}, stated here for $\nu=0$.

\begin{theorem}
Let $d(n)$ be the number of divisors of $n \in \mathbb{N}, \, 
\gamma$ the Euler constant and
$K_0(w)$ the modified Bessel function of order $0$. Then, for $\alpha, \beta > 0, \alpha\beta=1$,
\begin{eqnarray}
\label{int-kosh1}\\
- \frac{32}{\pi} \int_{0}^{\infty} \frac{ \left( \Xi \left( \tfrac{t}{2}
\right) \right)^{2}}{(1+t^{2})^{2}} \cos( \tfrac{1}{2}t \log \alpha ) \, dt 
& = &   \sqrt{\alpha} \left( \frac{\gamma - \log( 4 \pi \alpha)}{\alpha} - 
4 \sum_{n=1}^{\infty} d(n) K_{0}(2 \pi n \alpha) \right) \nonumber \\
& = & \sqrt{\beta} \left( \frac{\gamma - \log( 4 \pi \beta)}{\beta} - 
4 \sum_{n=1}^{\infty} d(n) K_{0}(2 \pi n \beta) \right).
\nonumber
\end{eqnarray}
\end{theorem}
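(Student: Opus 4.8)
The plan is to evaluate the contour integral on the right-hand side of \eqref{line-integral1} explicitly, since the excerpt has already reduced the left-hand integral to that contour integral. Starting from
\begin{equation*}
\int_{0}^{\infty} \frac{ 64 \, \Xi^{2} \left( \tfrac{t}{2} \right) \,
\cos( \tfrac{1}{2}t \log \alpha ) }{(1+t^{2})^{2}} \, dt =
 \frac{1}{i \sqrt{\alpha}}
\int_{\tfrac{1}{2} - i \infty}^{\tfrac{1}{2} + i \infty}
\Gamma^{2}\left( \tfrac{s}{2} \right) \zeta^{2}(s) \left( \frac{\alpha}{\pi}
\right)^{s} \, ds,
\end{equation*}
I would shift the line of integration from $\realpart{s} = \tfrac{1}{2}$ to $\realpart{s} = 1 + \delta$ so that the Dirichlet series \eqref{dirichlet-1} converges termwise on the contour. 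By the residue theorem this shift picks up the contribution of the double pole at $s = 1$ (a double pole from $\zeta^2(s)$, with $\Gamma^2(s/2)$ and $(\alpha/\pi)^s$ analytic and nonzero there), whose residue is recorded in the excerpt as $\alpha\bigl(\gamma - \log(4\pi/\alpha)\bigr)$. I must justify that the horizontal segments joining the two vertical lines contribute nothing in the limit; this follows from the rapid exponential decay of $\Gamma^2(s/2)$ as $\lvert\imagpart s\rvert\to\infty$ in the vertical strip $\tfrac12\le\realpart s\le 1+\delta$, which dominates the at-most-polynomial growth of $\zeta^2(s)$.

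Next I would interchange summation and integration on the shifted line and apply the Mellin--Barnes evaluation \eqref{mellin-KBessel} with $\nu = 0$ and $a = 2$ to identify each term. Writing $\Gamma^2(s/2) = 2^{2-s}\,2^{s-2}\Gamma(s/2)\Gamma(s/2)$ and setting $x = \pi m/\alpha$ in \eqref{mellin-KBessel} turns the $m$-th contour integral into a constant multiple of $K_0\!\left(2\pi m/\alpha\right)$. Summing over $m$ against the divisor coefficients $d(m)$ produces the series $\sum_{m=1}^{\infty} d(m)\,K_0(2\pi m/\alpha)$; tracking the prefactors $2^{2-s}$, the $1/(2\pi i)$ normalization in \eqref{mellin-KBessel}, and the $1/(i\sqrt{\alpha})$ out front then assembles the right-hand side of \eqref{int-kosh1}. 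Since $\alpha\beta = 1$, we have $1/\alpha = \beta$, so the Bessel series may equally be written in terms of $\beta$, and the $\log(4\pi/\alpha)$ term rearranges into the displayed $(\gamma - \log(4\pi\alpha))/\alpha$ form after factoring $\sqrt{\alpha}$.

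Finally, the chain of equalities $F(\alpha) = F(\beta)$ is immediate: the original integral on the left is manifestly invariant under $\alpha \to 1/\alpha$ because $\cos(\tfrac12 t\log\alpha) = \cos(\tfrac12 t\log(1/\alpha))$, so evaluating the same quantity once with parameter $\alpha$ and once with $\beta = 1/\alpha$ yields the two series expressions, which must therefore be equal. The main obstacle I anticipate is the analytic bookkeeping rather than any deep idea: one must verify the vanishing of the horizontal connecting segments carefully (invoking Stirling-type bounds on $\Gamma^2(s/2)$ against the convexity bound for $\zeta(s)$), confirm that the series--integral interchange is legitimate on $\realpart{s} = 1+\delta$ where $\sum d(m)m^{-(1+\delta)}$ converges absolutely and $K_0$ decays exponentially, and correctly match every power of $2$ and $\pi$ so the constants $\tfrac{32}{\pi}$ and the factor $4$ in front of the Bessel sum come out exactly. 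The identity \eqref{mellin-KBessel} supplies the representation-theoretic heart of the computation, so once its hypotheses $\realpart s > \pm\realpart\nu$ are checked at $\nu = 0$, the remaining work is a disciplined constant-chase.
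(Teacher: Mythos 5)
Your proposal is correct and follows essentially the same route as the paper's own proof: shift the contour in \eqref{line-integral1} from $\realpart{s}=\tfrac{1}{2}$ to $\realpart{s}=1+\delta$, collect the double pole of $\zeta^{2}(s)$ at $s=1$, expand $\zeta^{2}(s)$ via \eqref{dirichlet-1}, and evaluate each term with \eqref{mellin-KBessel} at $\nu=0$, $a=2$. One small bookkeeping correction: the direct shift yields $K_{0}(2\pi m/\alpha)=K_{0}(2\pi m\beta)$ and residue contribution $\alpha\left(\gamma-\log(4\pi/\alpha)\right)=\left(\gamma-\log(4\pi\beta)\right)/\beta$, i.e.\ the $\beta$-side of \eqref{int-kosh1} rather than the $(\gamma-\log(4\pi\alpha))/\alpha$ form your wording suggests, but your closing invariance argument $I(\alpha)=I(\beta)$ (the cosine is even in $\log\alpha$) then delivers both sides, exactly as the paper intends.
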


\section{An example from {R}amanujan}
\label{sec-ramanujan}

The success of the method described in Section \ref{sec-illustrative} 
depends on the appropriate choice of the function $\varphi(t)$ in 
\eqref{factorization-1} and the ability to evaluate, or at least transform, 
the resulting integral $I(f; \alpha)$. This section presents a second 
example that illustrates this point of view. 

Take 
\begin{equation}
\varphi(t) = \Gamma \left( \frac{2t-1}{4} \right)
\end{equation}
\noindent
and 
\begin{equation}
f(t) = 
\Gamma \left( \frac{2it-1}{4} \right)
\Gamma \left( \frac{-2it-1}{4} \right) = 
\left| \Gamma \left( \frac{2it-1}{4} \right) \right|^{2},
\end{equation}
\noindent
in view of $\overline{\Gamma(z)} = \Gamma( \overline{z} )$. The 
integral \eqref{int-I} becomes 
\begin{eqnarray}
\quad I(f;\alpha) & = & 
\int_{0}^{\infty} 
\left| \Gamma \left( \frac{it-1}{4} \right) \right|^{2}
\Xi \left( \frac{t}{2} \right) \cos \left( \frac{1}{2} t \log \alpha \right) dt
\label{rama-1} \\
& = & 
\int_{0}^{\infty} 
\Gamma \left( \frac{it-1}{4} \right) 
\Gamma \left( \frac{-it-1}{4} \right) 
\Xi \left( \frac{t}{2} \right) \cos \left( \frac{1}{2} t \log \alpha \right) dt.
\nonumber
\end{eqnarray}
\noindent
The functional equation $\Gamma(z+1) = z \Gamma(z)$, with $z = (it -1)/4$, now
gives a new representation of \eqref{rama-1}:
\begin{equation}
\label{rama-2}
I(f; \alpha) = 16 
\int_{0}^{\infty} 
\Gamma \left( \frac{3 + it}{4} \right) 
\Gamma \left( \frac{3-it}{4} \right) 
\frac{\Xi \left( \frac{t}{2} \right)}{1+t^{2}} 
 \cos \left( \frac{1}{2} t \log \alpha \right) dt.
\end{equation}
This integral was transformed by S.~Ramanujan 
\cite[Equation (13)]{ramanujan-1915a}. 

\begin{theorem}[Ramanujan]\label{ramanujan}
The identity 
\begin{multline}
\int_{0}^{\infty} 
\Gamma \left( \frac{3 + it}{4} \right) 
\Gamma \left( \frac{3-it}{4} \right) 
\frac{\Xi \left( \frac{t}{2} \right)}{1+t^{2}} 
\cos \left( \frac{1}{2} t \log \alpha \right) dt = 
\label{rama-44} \\
\pi^{5/2} \alpha^{3/2} \int_{0}^{\infty} 
xe^{- \pi \alpha^{2} x^{2}} \left( \frac{1}{2 \pi x} - 
\frac{1}{e^{2 \pi x } -1 } \right) \, dx
\end{multline}
\noindent
holds for $\alpha>0$.
\end{theorem}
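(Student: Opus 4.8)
The plan is to show that both sides of \eqref{rama-44} equal one and the same Mellin--Barnes integral on the vertical line $\realpart s=\tfrac12$; once this is established the identity follows by transitivity, and no contour will need to be moved.

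First I would treat the left-hand side. Since the integral in \eqref{rama-44} is exactly $\tfrac{1}{16}I(f;\alpha)$ with $I(f;\alpha)$ as in \eqref{rama-2}, I apply the Mellin-transform representation used in Section \ref{sec-illustrative} (the formula of Titchmarsh recorded there) with $\varphi(s)=\Gamma\!\left(\frac{2s-1}{4}\right)$, so that $\varphi\!\left(s-\tfrac12\right)\varphi\!\left(\tfrac12-s\right)=\Gamma\!\left(\frac{s-1}{2}\right)\Gamma\!\left(-\frac{s}{2}\right)$. This gives
\begin{equation*}
\frac{1}{16}I(f;\alpha)=\frac{1}{16\,i\sqrt{\alpha}}\int_{\frac12-i\infty}^{\frac12+i\infty}\Gamma\!\left(\frac{s-1}{2}\right)\Gamma\!\left(-\frac{s}{2}\right)\xi(s)\,\alpha^{s}\,ds .
\end{equation*}
Substituting the definition \eqref{def-xi} of $\xi$ and simplifying the gamma factors by $\Gamma(z+1)=z\Gamma(z)$ (which cancels the polynomial $\tfrac12 s(s-1)$), the reflection formula $\Gamma(\tfrac s2)\Gamma(1-\tfrac s2)=\pi/\sin(\pi s/2)$, and the duplication formula $\Gamma(\tfrac s2)\Gamma(\tfrac{s+1}{2})=2^{1-s}\sqrt{\pi}\,\Gamma(s)$, the integrand collapses to an expression built only from $\Gamma(s)$, $\Gamma(1-\tfrac s2)$, $\zeta(s)$ and elementary factors.

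Next I would treat the right-hand side by Mellin--Parseval. The inner function has the classical Mellin transform
\begin{equation*}
\int_{0}^{\infty}\left(\frac{1}{e^{2\pi x}-1}-\frac{1}{2\pi x}\right)x^{s-1}\,dx=(2\pi)^{-s}\Gamma(s)\zeta(s),\qquad 0<\realpart s<1,
\end{equation*}
while the Gaussian weight satisfies $\int_{0}^{\infty}\bigl(xe^{-\pi\alpha^{2}x^{2}}\bigr)x^{s-1}\,dx=\tfrac12(\pi\alpha^{2})^{-(s+1)/2}\Gamma\!\left(\frac{s+1}{2}\right)$ for $\realpart s>-1$. Since these two strips overlap, Parseval's formula for the Mellin transform taken with abscissa $c=\tfrac12$ writes the right-hand side of \eqref{rama-44} as a line integral on $\realpart s=\tfrac12$; after the change of variable $s\mapsto 1-s$ and one further use of the duplication and reflection formulas this integrand becomes \emph{identical} to the one produced from the left-hand side, namely (up to the common prefactor)
\begin{equation*}
-\frac{\pi^{3/2}}{2}\cdot\frac{1}{2\pi i}\int_{\frac12-i\infty}^{\frac12+i\infty}2^{-s}\pi^{-s/2}\Gamma(s)\,\Gamma\!\left(1-\tfrac{s}{2}\right)\zeta(s)\,\alpha^{\,s-\frac12}\,ds .
\end{equation*}
The functional equation \eqref{functional-zeta} (equivalently $\xi(s)=\xi(1-s)$) is precisely what reconciles the $\zeta(s)$ coming from the left side with the $\zeta(1-s)$ produced by the Parseval step.

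The gamma-function bookkeeping is routine; the main obstacle is analytic, namely justifying that all of these manipulations are legitimate. I would use Stirling's asymptotics to verify that $\Gamma(\frac{s-1}{2})\Gamma(-\frac s2)\xi(s)$ decays fast enough on vertical lines for the representation of Section \ref{sec-illustrative} to apply, and that the product of the two Mellin transforms is absolutely integrable on $\realpart s=\tfrac12$, so that the interchange of integration underlying Parseval's formula is valid. The pleasant feature of the argument is that the two convergence strips meet exactly at $\realpart s=\tfrac12$, so the two line integrals live on the same contour and the proof closes without crossing any pole of the integrand.
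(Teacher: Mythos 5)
Your proposal is correct, and it is worth noting at the outset that the paper itself does not prove Theorem \ref{ramanujan}: the identity is quoted from Ramanujan \cite[Equation (13)]{ramanujan-1915a} and used to deduce Corollary \ref{ramanujancor}, so there is no internal proof to compare against. What you have supplied is a self-contained proof built from exactly the toolkit the paper uses for its own theorems: the line-integral representation of Section \ref{sec-illustrative} (with the $y$ there read as $\alpha$) and Parseval's formula \eqref{parseval-1}, as in the second proof of Theorem \ref{thm-45}. I checked the computation: with $\varphi(s)=\Gamma\left(\frac{2s-1}{4}\right)$ one has $\varphi(s-\tfrac12)\varphi(\tfrac12-s)=\Gamma\left(\frac{s-1}{2}\right)\Gamma\left(-\frac{s}{2}\right)$; the factor $\tfrac12 s(s-1)$ in \eqref{def-xi} is absorbed via $\Gamma(u+1)=u\Gamma(u)$, and Legendre duplication reduces the left-hand side of \eqref{rama-44} to
\[
-\frac{\pi^{3/2}}{2}\cdot\frac{1}{2\pi i}\int_{\frac12-i\infty}^{\frac12+i\infty}2^{-s}\pi^{-s/2}\,\Gamma(s)\,\Gamma\left(1-\frac{s}{2}\right)\zeta(s)\,\alpha^{s-\frac12}\,ds,
\]
while Parseval with $F(s)=\tfrac12(\pi\alpha^{2})^{-(s+1)/2}\Gamma\left(\frac{s+1}{2}\right)$ and $G(s)=(2\pi)^{-s}\Gamma(s)\zeta(s)$ on $\realpart s=\tfrac12$ yields precisely the same expression for the right-hand side, sign and constants included. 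Your argument also has a structural advantage over the paper's proofs of Theorems \ref{thm-45} and \ref{thm-46}: since the Mellin transform of $\frac{1}{e^{2\pi x}-1}-\frac{1}{2\pi x}$ is already valid inside the critical strip, no contour is shifted and no residues are collected.

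Two inaccuracies, both harmless. First, the convergence strips do not ``meet exactly at $\realpart s=\tfrac12$'': $F(1-s)$ is analytic for $\realpart s<2$ and $G(s)$ on $0<\realpart s<1$, so they share the full open strip $0<\realpart s<1$ --- better than you claim, and this is what makes the Parseval step legitimate. Second, your closing bookkeeping is redundant: if \eqref{parseval-1} is applied with the Gaussian transform evaluated at $1-s$, the integrand already emerges in $\zeta(s)$ form and matches the left side verbatim; depending on how you orient Parseval you need either the substitution $s\mapsto 1-s$ or the functional equation \eqref{functional-zeta} (equivalently $\xi(s)=\xi(1-s)$), but not both, since composing the two undoes itself. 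With these cosmetic corrections the proof is complete, and the analytic justifications you invoke (Stirling decay of the gamma factors against polynomial growth of $\zeta$ on the critical line) do suffice.
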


This evaluation generates a modular-type transformation, which 
naturally leads to a beautiful identity among definite integrals.

\begin{corollary}\label{ramanujancor}
Let $\alpha, \, \beta > 0$ with $\alpha \beta =1$. Then 
\begin{multline*}
\alpha^{3/2} \int_{0}^{\infty} 
xe^{- \pi \alpha^{2} x^{2}} \left( \frac{1}{2 \pi x} - 
\frac{1}{e^{2 \pi x } -1 } \right) \, dx = \\
\beta^{3/2} \int_{0}^{\infty} 
xe^{- \pi \beta^{2} x^{2}} \left( \frac{1}{2 \pi x} - 
\frac{1}{e^{2 \pi x } -1 } \right) \, dx.
\end{multline*}
\end{corollary}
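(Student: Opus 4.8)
The plan is to derive this directly from Ramanujan's identity \eqref{rama-44}, exploiting the fact that its left-hand side is manifestly symmetric under $\alpha \mapsto 1/\alpha$. Write $G(\alpha)$ for the integral on the left of \eqref{rama-44}, namely
\begin{equation*}
G(\alpha) = \int_{0}^{\infty}
\Gamma\left(\frac{3+it}{4}\right)
\Gamma\left(\frac{3-it}{4}\right)
\frac{\Xi\left(\tfrac{t}{2}\right)}{1+t^{2}}
\cos\left(\frac{1}{2} t \log\alpha\right) dt.
\end{equation*}
The first step is to observe that the only dependence of $G(\alpha)$ on $\alpha$ enters through the factor $\cos(\tfrac{1}{2}t\log\alpha)$; the Gamma factors, the $\Xi$-function, and the rational weight $1/(1+t^{2})$ are all independent of $\alpha$.

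Next I would use the hypothesis $\alpha\beta=1$, so that $\log\beta = -\log\alpha$, together with the evenness of cosine, to conclude
\begin{equation*}
\cos\left(\frac{1}{2}t\log\beta\right)
= \cos\left(-\frac{1}{2}t\log\alpha\right)
= \cos\left(\frac{1}{2}t\log\alpha\right).
\end{equation*}
Since the integrands then agree pointwise in $t$, this yields $G(\alpha)=G(\beta)$ immediately, with no need for analytic continuation because both $\alpha$ and $\beta$ remain positive and hence inside the range of validity of Theorem~\ref{ramanujan}.

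Finally, I would invoke Theorem~\ref{ramanujan} twice: once to replace $G(\alpha)$ by $\pi^{5/2}\alpha^{3/2}\int_{0}^{\infty} x e^{-\pi\alpha^{2}x^{2}}\bigl(\tfrac{1}{2\pi x}-\tfrac{1}{e^{2\pi x}-1}\bigr)\,dx$, and once again for $\beta$. Equating the two expressions and cancelling the common constant $\pi^{5/2}$ gives exactly the claimed identity. I do not anticipate any genuine obstacle here, since the corollary is an immediate consequence of the invariance already built into \eqref{rama-44}; the only point warranting a moment's care is verifying that every factor other than the cosine is independent of the modular parameter, so that the symmetry of the cosine transfers cleanly to the whole integral.
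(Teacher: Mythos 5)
Your proposal is correct and is exactly the argument the paper intends: the corollary is presented as an immediate consequence of Theorem~\ref{ramanujan}, obtained by noting that $\log\beta=-\log\alpha$ and the evenness of cosine make the $\Xi$-integral in \eqref{rama-44} invariant under $\alpha\mapsto 1/\alpha$, then applying the theorem at both $\alpha$ and $\beta$ and cancelling $\pi^{5/2}$. No gaps; your remark that no analytic continuation is needed (both parameters stay positive) is also consistent with the paper's hypotheses.
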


\begin{note}
Koshlyakov example \eqref{int-kosh1} is obtained by squaring
$\Xi(t/2)/(1+t^{2})$, which is part of the integrand 
in \eqref{integ-11} appearing in the proof of the classical 
theta function identity. Thus is it natural to consider the integral 
\begin{equation}
\int_{0}^{\infty} 
\Gamma \left( \frac{3 + it}{4} \right)
\Gamma \left( \frac{3 - it}{4} \right)
\left( \frac{\Xi \left( \tfrac{t}{2} \right)}{1+t^{2}} \right)^{2} 
\, \cos \left( \tfrac{1}{2} t \log \alpha \right) \,dt 
\end{equation}
\noindent
as a variation of \eqref{rama-44}. Up to a constant factor, this may be 
expressed as 
\begin{equation}
\int_{0}^{\infty} 
\Gamma \left( \frac{-1 + it}{4} \right)
\Gamma \left( \frac{-1 - it}{4} \right)
\frac{\Xi^{2} \left( \tfrac{t}{2} \right)}{1+t^{2}} 
\, \cos \left( \tfrac{1}{2} t \log \alpha \right) \,dt, 
\label{ramanujan-99}
\end{equation}
\noindent
which is exactly the integral presented by S. Ramanujan at the end of his
paper \cite[Equation (22)]{ramanujan-1915a}. Thus the idea of 
squaring the functional equation of $\zeta(s)$ to produce new 
transformations is implicit in the work of Ramanujan, much before Koshlyakov. In his Lost Notebook 
\cite{ramanujan-1988a}, Ramanujan gives the following beautiful modular-type 
transformation resulting from this integral.

\begin{theorem}
\label{thm-3.4}
Let 
\begin{equation}
\lambda(x) = \psi(x) + \frac{1}{2x} - \log x,
\end{equation}
\noindent
where 
\begin{equation}
\psi(x) = \frac{\Gamma'(x)}{\Gamma(x)} = - \gamma - 
\sum_{m=0}^{\infty} \left( \frac{1}{m+x} - \frac{1}{m+1} \right)
\end{equation}
\noindent
is the logarithmic derivative of the Gamma function. If $\alpha$ and 
$\beta $ are positive numbers such that $\alpha \beta =1$, then 
\begin{multline*}
\sqrt{\alpha} \left( \frac{\gamma - \log( 2 \pi \alpha)}{2 \alpha} + 
\sum_{k=1}^{\infty} \lambda( k \alpha) \right) = 
\sqrt{\beta} \left( \frac{\gamma - \log( 2 \pi \beta)}{2 \beta} + 
\sum_{k=1}^{\infty} \lambda( k \beta) \right)  \\
= - \frac{1}{\pi^{3/2}} 
\int_{0}^{\infty} \left| \Gamma \left( \frac{-1+it}{4} \right) \right|^{2}\Xi^{2} \left( \frac{t}{2} \right) \, 
\frac{\cos \left( \tfrac{1}{2} t \log \alpha \right) \, dt}{1+t^{2}}.
\end{multline*}
\end{theorem}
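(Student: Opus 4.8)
The plan is to apply the transform method of Section~\ref{sec-illustrative} to the integral on the right, which is exactly the integral \eqref{ramanujan-99} appearing at the end of Ramanujan's paper. Since this integral is even in $\log\alpha$, it is invariant under $\alpha\to1/\alpha$; hence once we prove that it equals the $\sqrt{\alpha}$-member, the same computation with $\alpha$ replaced by $\beta=1/\alpha$ gives the $\sqrt{\beta}$-member, and the asserted three-way identity reduces to a single evaluation. To fit the framework \eqref{factorization-1}, I would write the non-oscillatory part as $f(t/2)\Xi(t/2)$ by peeling off one factor of $\Xi$, with
\[
\varphi(s)=\frac{\Gamma\!\left(\tfrac{2s-1}{4}\right)\sqrt{\xi\!\left(\tfrac12-s\right)}}{1+2s}.
\]
Using $\xi(\tfrac12+s)=\xi(\tfrac12-s)$ to split $\Xi$ into a product of square roots and $1+4u^2=(1+2iu)(1-2iu)$, one checks that $f(t)=\varphi(it)\varphi(-it)$ reproduces $\left|\Gamma\!\left(\tfrac{-1+it}{4}\right)\right|^2\Xi(t/2)/(1+t^2)$ after $t\mapsto t/2$. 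Feeding this $\varphi$ into the transform formula quoted in Section~\ref{sec-illustrative} turns $I:=\int_{0}^{\infty}\left|\Gamma\!\left(\tfrac{-1+it}{4}\right)\right|^2\Xi^2(\tfrac{t}{2})\cos(\tfrac12 t\log\alpha)(1+t^2)^{-1}\,dt$ into a Mellin--Barnes integral on $\realpart s=\tfrac12$ with integrand $\varphi(s-\tfrac12)\varphi(\tfrac12-s)\xi(s)\alpha^s$; the theorem asserts that $-\pi^{-3/2}I$ equals the $\sqrt{\alpha}$-member.

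The next step is to collapse the $\Gamma$- and $\xi$-factors. Substituting the definition \eqref{def-xi} of $\xi$ and using $\xi(1-s)=\xi(s)$, the integrand becomes a multiple of $s(1-s)\Gamma\!\left(\tfrac{s-1}{2}\right)\Gamma\!\left(-\tfrac s2\right)\pi^{-s}\Gamma^2\!\left(\tfrac s2\right)\zeta^2(s)\alpha^s$. The Legendre duplication formula $\Gamma\!\left(\tfrac{s-1}{2}\right)\Gamma\!\left(\tfrac s2\right)=2^{2-s}\sqrt{\pi}\,\Gamma(s-1)$ together with the reflection identity $\Gamma\!\left(\tfrac s2\right)\Gamma\!\left(-\tfrac s2\right)=-2\pi/(s\sin(\pi s/2))$ then telescopes this product, and I expect to arrive at
\[
-\frac{1}{\pi^{3/2}}\,I=\sqrt{\alpha}\,(-\pi)\,\frac{1}{2\pi i}\int_{1/2-i\infty}^{1/2+i\infty}\frac{\Gamma(s)\,\zeta^2(s)}{\sin(\pi s/2)\,(2\pi)^s}\,\alpha^{\,s-1}\,ds.
\]
Because the kernel $\Gamma(s)\zeta^2(s)/(\sin(\pi s/2)(2\pi)^s)$ is invariant under $s\mapsto 1-s$ --- this is the squared functional equation \eqref{functional-zeta} in disguise --- the substitution $s\mapsto 1-s$ replaces $\alpha^{s-1}$ by $\alpha^{-s}$ while fixing both the kernel and the line $\realpart s=\tfrac12$. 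Thus the right side equals $\sqrt{\alpha}\,\tfrac{1}{2\pi i}\int_{(1/2)}\Phi(s)\alpha^{-s}\,ds$ with $\Phi(s)=-\pi\Gamma(s)\zeta^2(s)/(\sin(\pi s/2)(2\pi)^s)$.

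It remains to read off the series. The essential special-function input is the Mellin transform of $\lambda$: starting from Binet's representation $\lambda(x)=-2\int_{0}^{\infty}t\,dt/((x^2+t^2)(e^{2\pi t}-1))$ and integrating in $x$ first, one finds for $1<\realpart s<2$ that
\[
\int_{0}^{\infty}x^{s-1}\lambda(x)\,dx=-\frac{\pi\,\Gamma(s)\,\zeta(s)}{\sin(\pi s/2)\,(2\pi)^s}=:\widetilde{\lambda}(s),
\]
so that $\Phi(s)=\zeta(s)\widetilde{\lambda}(s)$. Shifting the line from $\realpart s=\tfrac12$ to $\realpart s=c$ with $1<c<2$ crosses only the double pole of $\zeta^2$ at $s=1$; a short Laurent computation (using $\zeta(s)=\tfrac{1}{s-1}+\gamma+\cdots$ and $\psi(1)=-\gamma$) gives $\operatorname*{Res}_{s=1}\Phi(s)\alpha^{-s}=(\log(2\pi\alpha)-\gamma)/(2\alpha)$, and the rightward shift contributes the negative of this, namely $(\gamma-\log(2\pi\alpha))/(2\alpha)$. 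On the line $\realpart s=c$ I would expand $\zeta(s)=\sum_{k\ge1}k^{-s}$, interchange summation and integration, and invert term by term through $(k\alpha)^{-s}$ to obtain $\sum_{k\ge1}\lambda(k\alpha)$. Adding the residue to this series and multiplying by $\sqrt{\alpha}$ produces exactly the $\sqrt{\alpha}$-member of the identity; the invariance noted at the outset then supplies the other equalities.

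I expect the main obstacle to be the bridge between the two sides: establishing the closed form of $\widetilde{\lambda}(s)$ (so that $\Phi=\zeta\widetilde{\lambda}$ can be recognized) and correctly tracking the sign of the residue at $s=1$, which must come out as $+(\gamma-\log(2\pi\alpha))/(2\alpha)$ for the explicit term to match. The reduction of $\Gamma^2(\tfrac s2)\Gamma(\tfrac{s-1}{2})\Gamma(-\tfrac s2)$ by duplication and reflection is tedious but routine. The remaining analytic points --- absolute convergence on $\realpart s=c$, the vanishing of the horizontal segments in the contour shift, and the legitimacy of the term-by-term Mellin inversion --- are all controlled by the exponential decay $\Gamma(s)/\sin(\pi s/2)=O(e^{-\pi|\tau|})$ on vertical lines $s=\sigma+i\tau$ against the polynomial growth of $\zeta$, but they should be verified.
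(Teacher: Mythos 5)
Your proposal is correct: the factorization via $\varphi(s)=\Gamma\!\left(\tfrac{2s-1}{4}\right)\sqrt{\xi\!\left(\tfrac12-s\right)}/(1+2s)$, the collapse by duplication and reflection to the kernel $\Gamma(s)\zeta^{2}(s)/\left(\sin(\pi s/2)(2\pi)^{s}\right)$, its invariance under $s\mapsto 1-s$, the Mellin transform $\widetilde{\lambda}(s)$ from Binet's formula, and the residue $(\log(2\pi\alpha)-\gamma)/(2\alpha)$ at $s=1$ all check out. The paper itself states Theorem \ref{thm-3.4} without proof (it is quoted from Ramanujan's Lost Notebook, with proofs in the cited references), and your argument is precisely an instantiation of the paper's own machinery from Section \ref{sec-illustrative} --- factor the integrand, pass to a Mellin--Barnes integral on $\realpart{s}=\tfrac12$ via the quoted transform formula, shift the contour past the double pole at $s=1$, expand the Dirichlet series, and invert termwise --- so it is essentially the approach the paper would take.
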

\end{note}

\section{Some parametric generalizations. Statements of the results}
\label{sec-param-intro}

This section contains several new modular-type transformations. The results 
are stated in this section with their proofs
postponed to Section \ref{sec-proofs}. 

There are (at least) two approaches towards obtaining these 
transformations. One is presented in Theorems \ref{production-1} and 
\ref{production-2} and the corollaries following them. The other method 
involves the evaluation of integrals involving the Riemann $\Xi$-function, 
illustrated by \eqref{ramanujan-98} below. Given a modular-type transformation,
the use of Parseval's identity \eqref{parseval-1} produces an integral 
involving the Riemann $\Xi$-function. Conversely, having a representation for 
an aforementioned integral involving the Riemann $\Xi$-function, obtained 
by residue calculus and Mellin transforms, the obvious invariance of 
these integrals under $\alpha \mapsto 1/\alpha$, provides a 
modular-type transformation. Examples of these two methods are presented in 
the proofs of the main results.

Implicit in both methods is the idea of squaring the functional equation 
of $\zeta(s)$. In the second method, this is reflected in the term 
$\Xi\left( \frac{t}{2} \right)/(1+t^2)$, present in the integrands. This 
is generalized by the inclusion a new parameter $z$. 

In his work related to \eqref{ramanujan-99}, Ramanujan
\cite{ramanujan-1915a} considered the generalization
\begin{multline}
\int_{0}^{\infty} 
\Gamma \left( \frac{z-1+ i t }{4} \right)
\Gamma \left( \frac{z-1- i t }{4} \right) \\
\times \Xi \left( \frac{t + i z}{2} \right) 
\Xi \left( \frac{t - i z}{2} \right) 
\frac{\cos( \tfrac{1}{2} t \log \alpha) \, dt}{(t^{2} + (z+1)^{2}}
\label{ramanujan-98}
\end{multline}
of \eqref{ramanujan-99}. He provided 
alternative integral representations valid in different regions 
in the complex plane. Modular-type transformations for the above integral, which involve the Hurwitz 
zeta function, appear in \cite{dixit-2011a, dixit-2011b}. It should be 
mentioned here that Ramanujan had not only discovered Koshlyakov's formula
\eqref{int-kosh1} about $10$ years before, but had also generalized it. Details
appear in \cite{berndt-2008a}. The generalization presented below,
was later rediscovered by A. P. Guinand \cite{guinand-1955a} and is rephrased 
in a symmetric form in the theorem below. 
\begin{theorem}
Let $K_{\nu}(s)$ denote the modified {B}essel function of order $\nu$ and 
let $\begin{displaystyle} \sigma_{s}(n) = \sum_{d | n} d^{s} 
\end{displaystyle}$. For 
$-1 < \realpart{z} < 1$ define 
\begin{equation}
\omega(z, \alpha) = \left( \frac{\alpha}{\pi} \right)^{z/2} 
\Gamma \left( \frac{z}{2} \right) \zeta(z).
\end{equation}
\noindent
Then, if $\alpha, \, \beta > 0$ with $\alpha \beta  =1$, 
\begin{multline}\label{guinandf1}
\frac{1}{\sqrt{\alpha}} \left( \omega(z, \alpha) + \omega(-z,\alpha) - 
4 \alpha \sum_{n=1}^{\infty} \sigma_{-z}(n) n^{z/2} K_{z/2}(2 n \pi \alpha) 
\right) = \\
\frac{1}{\sqrt{\beta}} \left( \omega(z, \beta) + \omega(-z,\beta) - 
4 \beta \sum_{n=1}^{\infty} \sigma_{-z}(n) n^{z/2} K_{z/2}(2 n \pi \beta) 
\right).
\end{multline}
\end{theorem}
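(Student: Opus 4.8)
The plan is to establish the displayed identity by proving the single-sided representation
\begin{equation*}
\frac{1}{\sqrt{\alpha}} \left( \omega(z, \alpha) + \omega(-z,\alpha) - 4 \alpha \sum_{n=1}^{\infty} \sigma_{-z}(n) n^{z/2} K_{z/2}(2 n \pi \alpha) \right) = \int_{0}^{\infty} G(z,t) \cos\left( \tfrac{1}{2} t \log \alpha \right) \, dt
\end{equation*}
for a suitable $\alpha$-independent kernel $G(z,t)$, after which the invariance of the right-hand side under $\alpha \mapsto 1/\alpha$ (equivalently $\log\alpha \mapsto -\log\alpha$, which leaves $\cos$ fixed) immediately yields the claimed symmetry between the $\alpha$ and $\beta$ sides, exactly as in the schematic \eqref{method-2}$\Rightarrow$\eqref{method-1}. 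Concretely, following the illustrative example of Section \ref{sec-illustrative}, I would start from a contour integral of the form
\begin{equation*}
\frac{1}{2\pi i} \int_{(c)} \Gamma\left( \tfrac{s+z}{2} \right) \Gamma\left( \tfrac{s-z}{2} \right) \zeta(s+z)\,\zeta(s-z)\, \pi^{-s}\, \alpha^{s}\, ds
\end{equation*}
with $c$ chosen to the right of all relevant poles; this is the natural $z$-deformation of the integrand $\Gamma^2(s/2)\zeta^2(s)(\alpha/\pi)^s$ appearing in \eqref{line-integral1}, reflecting the "squaring of the functional equation" replaced here by the product $\zeta(s+z)\zeta(s-z)$ whose Dirichlet coefficients are $\sigma_{-z}(n)n^{z/2}$ up to normalization.

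The key steps would be as follows. First, I would expand the product $\zeta(s+z)\zeta(s-z) = \sum_{n\ge 1} \sigma_{-z}(n) n^{(z-s)} \cdot (\text{normalizing power})$ using the standard Ramanujan-type identity $\sum_{n} \sigma_{-z}(n)/n^{s} = \zeta(s)\zeta(s+z)$, valid in a right half-plane, so that moving the contour past the poles and integrating term by term produces the Bessel sum. Second, the term-by-term integrals would be evaluated by the Mellin–Barnes formula \eqref{mellin-KBessel} with $\nu = z/2$, which converts each $\frac{1}{2\pi i}\int \Gamma\left( \tfrac{s+z}{2} \right)\Gamma\left( \tfrac{s-z}{2} \right) x^{-s}\,ds$ into a multiple of $K_{z/2}(2\pi n \alpha)$, accounting for the factor $n^{z/2}$ and the argument $2\pi n\alpha$. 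Third, I would carry out the residue computation: shifting the contour to the left from $\realpart{s} = c$ to a line left of $s = \pm z$ and $s = 1 \mp z$ captures the simple poles coming from $\zeta(s\pm z)$ at $s = 1\mp z$ and the poles of the gamma factors; these residues assemble precisely into $\omega(z,\alpha) + \omega(-z,\alpha)$, the two terms arising symmetrically from the two $\zeta$-factors. Fourth, I would identify the shifted contour integral with the integral $\int_0^\infty G(z,t)\cos(\tfrac12 t\log\alpha)\,dt$ by substituting $s = \tfrac12 + it$ on the critical line and rewriting the gamma and zeta factors in terms of $\Xi\left( \tfrac{t\pm iz}{2} \right)$ via the definitions \eqref{def-xi}--\eqref{def-gamma-00}, matching the general form \eqref{int-Igen}.

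The main obstacle I anticipate is the justification of the contour shifts and the interchange of summation and integration when $z$ is complex in the strip $-1 < \realpart{z} < 1$. The presence of two zeta factors $\zeta(s+z)$ and $\zeta(s-z)$ means I must control growth of $\Gamma\left( \tfrac{s\pm z}{2} \right)\zeta(s\pm z)$ along vertical lines simultaneously; Stirling's formula gives exponential decay of the gamma factors in $\imagpart{s}$, which dominates the polynomial growth of $\zeta$ on vertical lines and secures absolute convergence, but the bookkeeping of exactly which poles are crossed as the contour moves (and confirming no spurious poles from coincidences $s+z = 1$ and $s - z = 1$ when $z=0$) requires care. A secondary technical point is verifying that the validity region $\realpart{s} > 1 \pm \realpart{z}$ required for the Dirichlet expansion is compatible with the contour position before the shift; I would handle this by first establishing the identity for $\realpart{z}$ in a smaller subinterval where the various half-planes overlap cleanly, and then extending to the full strip $-1 < \realpart{z} < 1$ by analytic continuation in $z$, both sides being analytic there.
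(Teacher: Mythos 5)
Your plan is viable and, once its normalizations are repaired, it does prove the theorem; but it is a genuinely different route from the paper's. The paper does not prove \eqref{guinandf1} by contour integration at all: in Section \ref{sec-advantages} it obtains the identity as the simplest instance of Guinand's summation formula \eqref{guisum}, taking $f(x)=K_{z/2}(2\pi\alpha x)$, which by Koshlyakov's identity \eqref{koshlyakov-1} is self-reciprocal (up to the scaling $\alpha\mapsto\beta$) with respect to the kernel \eqref{guifouker}; the two integral terms in \eqref{guisum} are then evaluated by the Mellin transform \eqref{mellin-K} and put into the $\omega(\pm z,\cdot)$ shape using the functional equation \eqref{zetafe}. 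Your route instead exhibits each side as a single integral manifestly invariant under $\alpha\mapsto1/\alpha$; in effect you are reconstructing the $\Xi$-integral \eqref{atul-00} of Note \ref{int-rgf}, which the paper cites from \cite{dixit-2011b} rather than proves. What each buys: the paper's argument is short but rests on the heavy external input \eqref{guisum} (whose other explicit instances, as the paper notes, are hard to produce); yours is self-contained Mellin--Barnes technology and yields, as a by-product, precisely the kind of invariant $\Xi$-integral representation that is the organizing theme of the paper.

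Two slips need fixing when you execute the plan. First, the normalization: $\zeta(s+z)\zeta(s-z)=\sum_{n\geq1}\sigma_{-2z}(n)\,n^{z-s}$, not a series with coefficients $\sigma_{-z}(n)n^{z/2}$; to land on $\sum_n\sigma_{-z}(n)n^{z/2}K_{z/2}(2\pi n\alpha)$ you must start from the integrand $\Gamma\left(\tfrac{s}{2}-\tfrac{z}{4}\right)\Gamma\left(\tfrac{s}{2}+\tfrac{z}{4}\right)\zeta\left(s-\tfrac{z}{2}\right)\zeta\left(s+\tfrac{z}{2}\right)(\pi\alpha)^{-s}$, i.e.\ use \eqref{expansion-sigma} together with \eqref{mellin-KBessel} for $\nu=z/2$; with your doubled-$z$ integrand the Bessel order comes out $z$, the coefficients come out $\sigma_{-2z}(n)n^{z}$, and the admissible strip shrinks to $|\realpart{z}|<\tfrac12$. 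Second, the pole bookkeeping is not optional fine print---it is exactly where the $\omega$-terms come from, and your description of it is internally inconsistent. If you shift only to the critical line $\realpart{s}=\tfrac12$ (as your final step requires), then for $-1<\realpart{z}<1$ you cross precisely the two zeta poles at $s=1\mp z/2$, and their residues, after one application of \eqref{functional-zeta}, give $\tfrac{1}{4\alpha}\left(\omega(z,\alpha)+\omega(-z,\alpha)\right)$; the gamma poles at $s=\pm z/2-2k$ lie strictly to the \emph{left} of the critical line and must not be crossed. Crossing them, as you propose, belongs to a different (also valid) variant: shift all the way to $\realpart{s}=1-c$, in which case the gamma-pole residues at $s=\pm z/2$ contribute $-\tfrac14\left(\omega(z,\beta)+\omega(-z,\beta)\right)$ and the reflected integral, via $s\to1-s$ and the functional equation, equals $\beta$ times the $\beta$-side Bessel sum, giving the transformation directly without any $\Xi$-integral. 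Either variant works, but you must commit to one; as written, the claim that the zeta-pole and gamma-pole residues together ``assemble precisely into $\omega(z,\alpha)+\omega(-z,\alpha)$'' is false.
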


\begin{note}\label{int-rgf}
The symmetry in $\alpha$ and $\beta$ suggests the existence of an integral 
involving the Riemann $\Xi$-function similar to \eqref{ramanujan-98}, which 
generalizes that giving rise to Koshlyakov's formula \eqref{int-kosh1}. This 
integral, found in \cite{dixit-2011b}, is 
\begin{equation}
- \frac{32}{\pi} \int_{0}^{\infty} 
\Xi \left( \frac{t + iz}{2} \right) 
\Xi \left( \frac{t - iz}{2} \right) 
\frac{\cos \left( \tfrac{1}{2} t \log \alpha \right) \, dt}
{(t^{2} + (z+1)^{2})(t^{2}+(z-1)^{2})}. 
\label{atul-00}
\end{equation}
\noindent
The term $\begin{displaystyle} \Xi^{2}(t/2)/(1+t^{2})^{2} 
\end{displaystyle}$ in \eqref{int-kosh1}, which resulted from squaring part
of the integrand in \eqref{integ-11}, is now generalized to 
\begin{equation*}
\frac{ 
\Xi \left( \frac{t + i z}{2} \right) 
\Xi \left( \frac{t - i z}{2} \right) }
{ (t^{2} + (z+1)^{2})(t^{2}+ (z-1)^{2})}
\end{equation*}
\noindent 
in \eqref{atul-00}. The integral in \eqref{ramanujan-98} can also be 
rewritten to contain the above expression. To the best of our 
knowledge, \eqref{atul-00} is the only other 
integral of this type, besides Ramanujan's  \eqref{ramanujan-98}, that 
has been studied up to now. Several new integrals of this type are 
presented next. 
\end{note}

\begin{note}
As mentioned above, Koshlyakov made use of the idea of squaring the 
functional equation for $\zeta(s)$ in order to obtain some new transformations through the existing ones.
These include Hardy's formula \cite{hardy-1915a}, rephrased in a compact form
given by Koshlyakov \cite[Equations (14), (20)]{koshlyakov-1937a}:
\begin{align}\label{hardyf}
\sqrt{\alpha} \int_{0}^{\infty} e^{-\pi \alpha^{2} x^{2}} 
\left( \psi(x+1) - \log x \right) \, dx &= \sqrt{\beta} \int_{0}^{\infty} e^{-\pi \beta^{2} x^{2}} 
\left( \psi(x+1) - \log x \right) \, dx\\
&=2 \int_{0}^{\infty} \frac{\Xi(t/2)}{1+t^{2}} \, 
\frac{\cos \left( \tfrac{1}{2}t \log \alpha \right)}{\cosh \tfrac{1}{2} 
\pi t} \,  dt. 
\nonumber
\end{align}
\noindent 
\end{note}

Koshlyakov \cite[Equations (36), (40)]{koshlyakov-1937a} squared the term 
$\Xi(t/2)/(1+t^{2})$ in the integral on the extreme right above and obtained 
the following result\footnote{In equation (40) in Koshlyakov's paper, there
is a minus sign missing in front on the right-hand side.}. 

\begin{theorem}
Define 
\begin{equation}
\Lambda(x) = \frac{\pi^{2}}{6} + \gamma^{2} - 2 \gamma_{1} + 2 \gamma \log x 
+ \frac{1}{2} \log^{2}x + \sum_{n=1}^{\infty} d(n) 
\left( \frac{1}{x+n} - \frac{1}{n} \right), 
\end{equation}
\noindent
where $\gamma_{1}$ is the Stieltjes constant defined by
\begin{equation*}
\gamma_1=\lim_{m\to\infty}\left(\sum_{k=1}^{m}\frac{\log k}{k}-\frac{(\log m)^2}{2}\right).
\end{equation*}
Then, for $\alpha, \beta > 0, \alpha\beta=1$, 
\begin{align}
\sqrt{\alpha} \int_{0}^{\infty} K_{0}(2 \pi \alpha x) \Lambda(x) \, dx &= \sqrt{\beta} \int_{0}^{\infty} K_{0}(2 \pi \beta x) \Lambda(x) \, dx\nonumber\\
&=8 \int_{0}^{\infty} \frac{ \left( \Xi \left( \tfrac{t}{2} \right) \right)^{2}}
{(1+t^{2})^{2}} \,  
\frac{\cos \left( \tfrac{1}{2} t \log \alpha \right)}{\cosh \left( \tfrac{1}{2}
\pi t \right)} \, dt. \label{kosh1937}
\end{align}
\noindent
\end{theorem}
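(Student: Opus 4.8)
The key structural feature is that the theorem asserts a two-part identity: the equality of the two $\alpha$/$\beta$ series on the one hand, and the common integral representation on the other. As the paper repeatedly emphasizes, the first equality is an automatic consequence of the second, since the right-hand integral is manifestly invariant under $\alpha\mapsto 1/\alpha$ (this swaps the cosine's argument's sign, which the cosine absorbs, while $\alpha\beta=1$). Hence the entire content is to establish the single integral representation
\begin{equation*}
\sqrt{\alpha}\int_{0}^{\infty}K_{0}(2\pi\alpha x)\Lambda(x)\,dx
=8\int_{0}^{\infty}\frac{\bigl(\Xi(\tfrac{t}{2})\bigr)^{2}}{(1+t^{2})^{2}}\,
\frac{\cos(\tfrac{1}{2}t\log\alpha)}{\cosh(\tfrac{1}{2}\pi t)}\,dt,
\end{equation*}
and the whole proof reduces to matching two Mellin-type representations of the same object.

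**Main steps.** First I would start from the right-hand $\Xi$-integral and convert it to a contour integral along $\realpart{s}=\tfrac12$, exactly as in the derivation of \eqref{line-integral1}, using the factorization $\Xi^{2}(t/2)/(1+t^{2})^{2}$ coming from the square of the functional equation \eqref{functional-zeta}. The new ingredient relative to the Koshlyakov example of Section~\ref{sec-illustrative} is the factor $1/\cosh(\tfrac12\pi t)$; under the standard substitution $s=\tfrac12+it$ this becomes a Gamma-quotient, since $1/\cosh(\tfrac12\pi t)$ is (up to elementary factors) the Mellin-type kernel attached to $\Gamma(\tfrac{s}{2})\Gamma(\tfrac{1-s}{2})$ via the reflection formula $\Gamma(w)\Gamma(1-w)=\pi/\sin\pi w$. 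This produces an integrand of the shape $\Gamma$-factors times $\zeta^{2}(s)$ times $\alpha^{s}$, to which I apply Parseval's identity \eqref{parseval-1} (the second method advertised in Section~\ref{sec-param-intro}). Second, on the left-hand side I would compute the Mellin transform of $\Lambda(x)$; the definition of $\Lambda$ is engineered so that its Mellin transform, after accounting for the constant, the logarithmic, and the divisor-sum pieces, reproduces precisely $\Gamma^{2}(s/2)\zeta^{2}(s)$ up to powers of $\pi$, with the Stieltjes constant $\gamma_{1}$ and the $\pi^{2}/6+\gamma^{2}$ term arising from the Laurent expansion of $\Gamma^{2}(s/2)\zeta^{2}(s)$ at its double pole at $s=1$. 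Third, pairing this against the Mellin transform of $K_{0}(2\pi\alpha x)$ — given by \eqref{mellin-KBessel} with $\nu=0$ — and invoking Parseval again collapses the left-hand integral into the same contour integral, completing the identification.

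**Main obstacle.** The delicate point is not any single Mellin evaluation but the careful bookkeeping of the double pole at $s=1$ and the attendant convergence/analytic-continuation issues. When the line of integration is shifted to pick up the Dirichlet series \eqref{dirichlet-1} for $\zeta^{2}(s)$, the residue at the double pole of $\Gamma^{2}(s/2)\zeta^{2}(s)(\alpha/\pi)^{s}$ is a full $\tfrac{d}{ds}$-residue, and it is exactly this residue that must reproduce the non-series part of $\Lambda(x)$ — namely the block $\tfrac{\pi^{2}}{6}+\gamma^{2}-2\gamma_{1}+2\gamma\log x+\tfrac12\log^{2}x$. Verifying that the Stieltjes constant $\gamma_{1}$ appears with the correct coefficient requires the Laurent expansion $\zeta(s)=\tfrac{1}{s-1}+\gamma-\gamma_{1}(s-1)+\cdots$ together with the expansions of $\Gamma^{2}(s/2)$ and $(\alpha/\pi)^{s}$ about $s=1$, and then matching the constant, $\log x$, and $\log^{2}x$ terms after taking the inverse Mellin transform. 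I would therefore treat the residue computation as the technical heart of the argument; once the constant term of $\Lambda$ is shown to be the residue and the divisor-sum term is identified with the shifted contour integral via \eqref{mellin-KBessel}, the two sides coincide term by term and the modular-type transformation follows from the invariance already noted.
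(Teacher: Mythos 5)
Your overall architecture is the right one---it is the paper's ``second method'', and your observation that the first equality follows from the invariance of the $\Xi$-integral under $\alpha\mapsto1/\alpha$ is correct. (For the record, the paper never proves \eqref{kosh1937} directly: it quotes Koshlyakov, and instead proves the parametric generalization, Theorem \ref{thm-45}, whose $z\to0$ limit recovers \eqref{kosh1937} because the four gamma factors there collapse by the reflection formula to $2\pi^{2}/\cosh(\tfrac{1}{2}\pi t)$ while $\Lambda(x,z)\to\Lambda(x)$.) However, your Step 2, which you yourself identify as the heart of the matter, is wrong, and carried out as written the argument proves a different identity. On $0<\realpart{s}<1$ the Mellin transform of $\Lambda$ is
\begin{equation*}
\mathfrak{M}[\Lambda;s]=\Gamma(s)\Gamma(1-s)\,\zeta^{2}(1-s)=\frac{\pi}{\sin \pi s}\,\zeta^{2}(1-s),
\end{equation*}
not ``$\Gamma^{2}(s/2)\zeta^{2}(s)$ up to powers of $\pi$''. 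The kernel $\pi/\sin\pi s$ is forced by the shape of the divisor sum: the inverse Mellin transform of $\pi/\sin\pi s$ is $1/(1+x)$, which is what produces the terms $\tfrac{1}{x+n}-\tfrac{1}{n}$. The function whose Mellin transform is proportional to $\Gamma^{2}(s/2)\zeta^{2}(s)\pi^{-s}$ is instead the Bessel series $4\sum_{n\geq1}d(n)K_{0}(2\pi nx)$ (with its residue terms); pairing \emph{that} against $K_{0}(2\pi\alpha x)$ via \eqref{parseval-1} reproduces Koshlyakov's other formula \eqref{int-kosh1}, in which no $1/\cosh(\tfrac{1}{2}\pi t)$ appears. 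With the correct transform, the Parseval integrand is $\tfrac{1}{4}\Gamma^{2}(\tfrac{s}{2})\Gamma(s)\Gamma(1-s)\zeta^{2}(s)(\pi\alpha)^{-s}$, and it is precisely the factor $\Gamma(s)\Gamma(1-s)$ that becomes $\pi/\cosh(\tfrac{1}{2}\pi t)$ on $s=\tfrac{1+it}{2}$ (note it is $\Gamma(s)\Gamma(1-s)$, not $\Gamma(\tfrac{s}{2})\Gamma(\tfrac{1-s}{2})$ as claimed in your Step 1).

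The same mix-up breaks your residue bookkeeping. A pole of order $k$ played against $x^{-s}$ can produce at most $\log^{k-1}x$; since $\Lambda$ contains $\tfrac{1}{2}\log^{2}x$, the relevant singularity must have order three, not two. In the correct computation one shifts $\Gamma(w)\Gamma(1-w)\zeta^{2}(w)x^{w-1}$ across $w=1$, where the double pole of $\zeta^{2}(w)$ merges with the simple pole of $\Gamma(1-w)$: the second-derivative residue yields $\tfrac{1}{2}\log^{2}x+2\gamma\log x$, the Stieltjes constant $\gamma_{1}$ comes from the Laurent coefficients of $\zeta$, and $\pi^{2}/6$ enters through $\Gamma''(1)=\gamma^{2}+\pi^{2}/6$ --- a contribution that does not exist in your proposed integrand $\Gamma^{2}(s/2)\zeta^{2}(s)(\alpha/\pi)^{s}$, whose double-pole residue is the elementary quantity $\alpha(\gamma-\log(4\pi/\alpha))$ computed in Section \ref{sec-illustrative}. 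So keep your skeleton, but replace Step 2 by the transform displayed above (or argue as the paper does for Theorem \ref{thm-45}: apply Theorem \ref{production-1} with $F(s,z)=\Gamma(s+\tfrac{z}{2})\Gamma(1-s+\tfrac{z}{2})$ and Corollary \ref{cor5.4}, then let $z\to0$); the kernel assignments you need for the case $m=2$ are stated explicitly in Section \ref{sec-dev}.
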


In his work, Koshlyakov did not consider one variable generalizations of the 
type in \eqref{ramanujan-98} and \eqref{atul-00} of any of his formulas. 
The next result presents a generalization of \eqref{kosh1937} where 
$\Xi^{2}(t/2)/(1+t^{2})^{2}$ is generalized to 
\begin{equation*}
\frac{ 
\Xi \left( \frac{t + i z}{2} \right) 
\Xi \left( \frac{t - i z}{2} \right) }
{ (t^{2} + (z+1)^{2})(t^{2}+ (z-1)^{2})}.
\end{equation*}
Furthermore, the reciprocal of $\cosh\tfrac{1}{2}\pi t$ is replaced by a 
product of four gamma functions. 

\begin{theorem}
\label{thm-45}
Assume $-1 < \realpart{z} < 1$ and let $\gamma, \, \gamma_{1}$ and 
$K_{\nu}(z)$ be as before. Define 
\begin{multline}\label{lambda-def}
\Lambda(x,z) = x^{z/2} \Gamma(1+z) \left\{ 
\frac{x^{-z}}{-z} \zeta(1-z) + (2 \gamma + \log x + \psi(1+z) ) \zeta(1+z) 
\right. \\
\left. + \zeta'(1+z) + \sum_{n=1}^{\infty} \sigma_{-z}(n) 
\left( \frac{n^{z}}{(n+x)^{z+1}} - \frac{1}{n} \right) \right\}.
\end{multline}
\noindent
Then, for $\alpha,\beta > 0$ and $\alpha\beta=1$, 
\begin{multline}
\label{formula-121}
\sqrt{\alpha} \int_{0}^{\infty} K_{z/2}(2 \pi \alpha x) \Lambda(x,z) dx = \sqrt{\beta} \int_{0}^{\infty} K_{z/2}(2 \pi \beta x) \Lambda(x,z) dx\\
=\frac{2^{z+2}}{\pi^{2}} \int_{0}^{\infty} 
\Gamma \left( \frac{z+3+it}{4} \right) 
\Gamma \left( \frac{z+3-it}{4} \right) 
\Gamma \left( \frac{z+1+it}{4} \right)  \\
\Gamma \left( \frac{z+1-it}{4} \right) 
\Xi \left( \frac{t+iz}{2} \right) 
\Xi \left( \frac{t-iz}{2} \right) 
\frac{ \cos \left( \tfrac{1}{2} t \log \alpha \right) \, dt}
{(t^{2}+ (z+1)^{2})(t^{2} + (z-1)^{2})}. 
\end{multline}
\end{theorem}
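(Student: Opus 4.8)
The plan is to prove the single representation that the right-hand $\Xi$-integral of \eqref{formula-121} equals $\sqrt{\alpha}\int_{0}^{\infty}K_{z/2}(2\pi\alpha x)\Lambda(x,z)\,dx$, i.e. an identity of the shape \eqref{method-2}. Once this is done, the modular equality between the two Bessel integrals is automatic: since $\alpha\beta=1$ forces $\cos(\tfrac12 t\log\beta)=\cos(\tfrac12 t\log\alpha)$, the right-hand integrand is invariant under $\alpha\mapsto\beta$, so both Bessel integrals equal the same $\Xi$-integral.

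First I would reduce the $\Xi$-integral to a Mellin--Barnes contour integral, following the passage leading to \eqref{line-integral1}. Writing $\Xi(\tfrac{t\pm iz}{2})=\xi(\tfrac{1\mp z+it}{2})$ and inserting \eqref{def-xi}, the crucial observation is that the quadratic factors $s(s-1)$ from the two copies of $\xi$ collapse to $\tfrac{1}{64}\bigl(t^{2}+(z+1)^{2}\bigr)\bigl(t^{2}+(z-1)^{2}\bigr)$, which cancels the quartic denominator in \eqref{formula-121} exactly; this is precisely why that denominator is present. Replacing $\cos(\tfrac12 t\log\alpha)$ by $\tfrac12(\alpha^{it/2}+\alpha^{-it/2})$ and exploiting the evenness of the integrand (itself a manifestation of the functional equation \eqref{functional-zeta}), the substitution $s=\tfrac{1-z+it}{2}$ turns the integral into a line integral along $\realpart s=\tfrac{1-\realpart z}{2}$ whose integrand is a product of gamma functions, the factor $\zeta(s)\zeta(s+z)$, and a power of $\alpha$. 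Here $\zeta(s)\zeta(s+z)=\sum_{n\ge1}\sigma_{-z}(n)n^{-s}$ is the one-parameter analogue of the squared zeta in \eqref{line-integral1}.

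Separately I would evaluate the Bessel integral by Parseval's identity \eqref{parseval-1}, writing it as $\tfrac{1}{2\pi i}\int_{(c)}\mathcal{M}[K_{z/2}(2\pi\alpha\,\cdot)](s)\,\widetilde\Lambda(1-s)\,ds$, where the first factor is the gamma quotient of \eqref{mellin-KBessel} with $\nu=z/2$. The Mellin transform $\widetilde\Lambda$ is computed term by term: the beta integral $\int_{0}^{\infty}x^{-s}\tfrac{n^{z}}{(n+x)^{z+1}}\,dx$ gives a ratio of gammas times $n^{z-s}$, and summation against $\sigma_{-z}(n)$ rebuilds the factor $\zeta(s)\zeta(s+z)$, while the closed-form part of $\Lambda(x,z)$---the $\zeta(1\pm z)$, $\gamma$, $\psi(1+z)$, $\log x$ and $\zeta'(1+z)$ terms---corresponds to the poles of $\widetilde\Lambda$, the double pole generating the $\log x$ and $\zeta'(1+z)$ terms and the regularizing $-1/n$ subtraction supplying the analytic continuation past $\realpart s=1$. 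Inserting $\widetilde\Lambda(1-s)$ into Parseval produces a contour integral with the same integrand---gamma functions times $\zeta(s)\zeta(s+z)$ times a power of $\alpha$---as in the previous paragraph, and reconciling the two using the reflection and duplication formulas for $\Gamma$ completes the proof of \eqref{formula-121}.

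The main obstacle is the gamma-factor bookkeeping together with the justification of the analytic manipulations. One must verify that the horizontal segments vanish when shifting contours, using Stirling's asymptotics for the gamma quotients against the polynomial growth of $\zeta(s)\zeta(s+z)$ on vertical lines, and one must extract the confluent double-pole residue correctly, since it is responsible for the $\log x$, $\gamma$, $\psi(1+z)$ and $\zeta'(1+z)$ terms of $\Lambda(x,z)$. The most delicate point is the regularizing $-1/n$ inside the series defining $\Lambda(x,z)$: it is exactly what is needed to continue $\sum_{n}\sigma_{-z}(n)n^{-s}$ past $\realpart s=1$ and to keep the term-by-term Mellin inversion and the interchange of summation and integration valid uniformly on the strip $-1<\realpart z<1$, so it must be carried through the whole computation rather than discarded.
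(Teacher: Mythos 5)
Your overall architecture is the one the paper itself uses (its second proof of Theorem \ref{thm-45}): establish an identity of type \eqref{method-2} by converting the $\Xi$-integral into a Mellin--Barnes integral, evaluate the Bessel-transform side by Mellin/Parseval techniques, match the two, and get the modular equality for free from the evenness of the cosine. Your first paragraph is in effect a rederivation of the master identity \eqref{formula-210} from \cite{dixit-2011b}, and your observation that the factors $\tfrac12 s(s-1)$ from the two copies of $\xi$ produce $\tfrac{1}{64}\bigl(t^{2}+(z+1)^{2}\bigr)\bigl(t^{2}+(z-1)^{2}\bigr)$, cancelling the quartic denominator, is correct. So the outer steps are sound and faithful to the paper.

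The genuine gap is in your central step: computing $\widetilde\Lambda$ ``term by term'' is not valid as described, in any regrouping. Concretely, the beta evaluation
\begin{equation*}
\int_{0}^{\infty} x^{s-1}\, x^{z/2}\,\frac{n^{z}}{(n+x)^{z+1}}\, dx
= n^{\,s+\frac{z}{2}-1}\,\frac{\Gamma\!\left(s+\tfrac{z}{2}\right)\Gamma\!\left(1+\tfrac{z}{2}-s\right)}{\Gamma(1+z)}
\end{equation*}
holds only for $-\realpart{\tfrac{z}{2}} < \realpart{s} < 1+\realpart{\tfrac{z}{2}}$, whereas the summed series $\sum_{n}\sigma_{-z}(n)\,n^{s+\frac{z}{2}-1}$, which is what ``rebuilds'' the zeta product via \eqref{expansion-sigma}, converges only when $\realpart{s} < -\realpart{\tfrac{z}{2}}$ and $\realpart{s} < \realpart{\tfrac{z}{2}}$ simultaneously; these two regions are disjoint for every $z$ with $-1<\realpart{z}<1$, including $z=0$. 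Worse, the closed-form terms of \eqref{lambda-def} (pure powers and a power times $\log x$) have no Mellin transform in any strip, and neither do the subtracted pairs $\frac{n^{z}}{(n+x)^{z+1}}-\frac{1}{n}$, since each tends to $-\frac1n\neq 0$ as $x\to\infty$. So the $-1/n$ subtraction cannot ``keep the term-by-term Mellin inversion valid''; its actual role is different. The rigorous substitute is exactly what the paper does: prove the inverse-Mellin representation \eqref{formula-313} of $\Lambda(x,z)$ by contour shifting --- move to a line where \eqref{expansion-sigma} converges, and when the beta-function formula \eqref{melfs} is used outside its strip of validity, collect the residue at $w=1+\tfrac{z}{2}$, which is precisely what generates the $-1/n$ terms (formula \eqref{formula-322}), while the poles at $w=-\tfrac z2$, $1-\tfrac z2$ and the double pole at $1+\tfrac z2$ generate the $\zeta(1\pm z)$, $\log x$, $\psi(1+z)$ and $\zeta'(1+z)$ terms. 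Once that lemma (equivalently, the statement $f(x,z)=\Lambda(x,z)$ in the paper's first proof) is in place, the rest of your plan --- Parseval \eqref{parseval-1} against the Mellin transform \eqref{mellin-KBessel} of $K_{z/2}$, and the gamma-function bookkeeping via reflection and duplication --- does go through and yields \eqref{formula-121}.
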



\begin{theorem}
\label{thm-46}
Assume $-1 < \realpart{z} < 1$. Let 
\begin{equation}
\mu(x,z) = \frac{\Gamma( 1 + z) \zeta(1+z) x^{-1 -z/2}}{(2 \pi)^{2+z}} 
\end{equation}
\noindent
and define
\begin{equation}
\Phi(x,z)  =  2 \sum_{n=1}^{\infty} \sigma_{-z}(n) n^{z/2} K_{z}(
4 \pi \sqrt{nx} ) - \mu(x,z) - \mu(x,-z).
\label{def-Phi}
\end{equation}
\noindent
Then, for $\alpha,\beta > 0$ and $\alpha\beta=1$, 
\begin{multline}
\sqrt{\alpha^{3}} \int_{0}^{\infty} x K_{z/2}(2 \pi \alpha x) \Phi(x,z) dx =\sqrt{\beta^{3}} \int_{0}^{\infty} x K_{z/2}( 2 \pi \beta x) 
\Phi(x,z) dx\\
= \frac{2}{\pi^{4}} \int_{0}^{\infty} 
\Gamma \left( \frac{z+3+it}{4} \right) 
\Gamma \left( \frac{z+3-it}{4} \right) 
\Gamma \left( \frac{-z+3+it}{4} \right)  \\
\times \Gamma \left( \frac{-z+3-it}{4} \right) 
\Xi \left( \frac{t + i z}{2} \right) 
\Xi \left( \frac{t - i z}{2} \right) 
\frac{\cos \left( \tfrac{1}{2} t \log \alpha \right) \, dt}
{(t^{2} + (z+1)^{2})(t^{2} + (z-1)^{2})}. \label{thm-46id}
\end{multline}
\end{theorem}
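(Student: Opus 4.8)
The plan is to use the second of the two methods described at the start of Section~\ref{sec-param-intro}: I would prove the single evaluation of the left-hand integral of \eqref{thm-46id} (the one in $\alpha$) as the integral involving the Riemann $\Xi$-function on the right, and then deduce the modular relation from the manifest symmetry of that $\Xi$-integral. Since $\cos$ is even and $\log\beta=-\log\alpha$ whenever $\alpha\beta=1$, the $\Xi$-integral is unchanged under $\alpha\mapsto\beta$; this is precisely the mechanism \eqref{method-1}--\eqref{method-2}. Thus it suffices to show that $\sqrt{\alpha^{3}}\int_{0}^{\infty}x\,K_{z/2}(2\pi\alpha x)\,\Phi(x,z)\,dx$ equals the stated $\Xi$-integral, after which replacing $\alpha$ by $\beta$ reproduces the same right-hand side and yields the equality of the two integral sides.

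First I would compute the two Mellin transforms that feed into Parseval's identity \eqref{parseval-1}. For the outer weight, applying \eqref{mellin-KBessel} with $\nu=z/2$ and $a=2\pi\alpha$ shows that the Mellin transform of $xK_{z/2}(2\pi\alpha x)$ is a product of two Gamma factors times a power $\alpha^{-s-1}$. For $\Phi(x,z)$, I would first record the Mellin transform of the single term $K_{z}(4\pi\sqrt{nx})$, again via \eqref{mellin-KBessel} after the substitution $x\mapsto\sqrt{x}$, and then sum against the Dirichlet series $\sum_{n\ge 1}\sigma_{-z}(n)n^{z/2}n^{-s}=\zeta\!\left(s-\tfrac{z}{2}\right)\zeta\!\left(s+\tfrac{z}{2}\right)$, the two-parameter analogue of \eqref{dirichlet-1}. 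The two functions $\mu(x,z)$ and $\mu(x,-z)$ subtracted in \eqref{def-Phi} are exactly the pure-power terms that cancel the contributions of the poles of these two zeta factors, so that the Mellin transform of $x\Phi(x,z)$ is analytic in a vertical strip overlapping the one required for $xK_{z/2}(2\pi\alpha x)$; tracking this strip is where the hypothesis $-1<\realpart z<1$ enters.

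With both transforms available, Parseval's identity \eqref{parseval-1} expresses the left-hand integral as a contour integral of the schematic form $\tfrac{1}{2\pi i}\int_{(c)}(\text{Gamma factors})\,\zeta\!\left(1-s-\tfrac{z}{2}\right)\zeta\!\left(1-s+\tfrac{z}{2}\right)\alpha^{-s}\,ds$. I would then apply the functional equation \eqref{functional-zeta} to each zeta factor --- this is the squaring of the functional equation emphasized throughout the paper --- converting them back to $\zeta(s+z/2)\zeta(s-z/2)$ at the cost of further Gamma and exponential factors. Collecting and simplifying all Gamma functions by the duplication formula reorganizes them into the symmetric product $\Gamma\!\left(\tfrac{z+3\pm it}{4}\right)\Gamma\!\left(\tfrac{-z+3\pm it}{4}\right)$ of \eqref{thm-46id}, with the rational factor $(t^{2}+(z+1)^{2})^{-1}(t^{2}+(z-1)^{2})^{-1}$ and the precise Gamma arguments following from repeated use of $\Gamma(w+1)=w\Gamma(w)$, while each zeta recombines with its Gamma partner into $\xi$, i.e.\ $\Xi\!\left(\tfrac{t\pm iz}{2}\right)$. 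Finally, the prefactor $\sqrt{\alpha^{3}}$ combines with the $\alpha^{-s-1}$ from the Bessel transform so that, on the line parametrized by $s=\tfrac12+\tfrac{it}{2}$, the power of $\alpha$ reduces to $\alpha^{-it/2}$; symmetrizing the $s\mapsto 1-s$--invariant integrand in $t$ then produces $\cos\!\left(\tfrac12 t\log\alpha\right)$, and carrying the powers of $2$ and $\pi$ through the computation gives the constant $2/\pi^{4}$.

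The main obstacle will be the analytic bookkeeping rather than any single conceptual step: one must identify exactly which residues produce $\mu(x,z)+\mu(x,-z)$ in \eqref{def-Phi}, confirm that the two Parseval strips genuinely overlap so that \eqref{parseval-1} is applicable (using Stirling estimates for the four Gamma factors to control convergence and to justify the contour shifts and interchange of sum and integral), and verify that the many Gamma factors collapse cleanly into the claimed product with the correct constant. I expect the uniformity in $z$ across the strip $-1<\realpart z<1$, together with the precise normalizations $\sqrt{\alpha^{3}}$ and $2/\pi^{4}$, to demand the most care; the argument should otherwise run parallel to the proof of Theorem~\ref{thm-45}, differing only in the Bessel weight $xK_{z/2}$ in place of $K_{z/2}$ and in the corresponding shifts of the Gamma arguments.
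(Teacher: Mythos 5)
Your proposal is correct and is essentially the paper's own argument: the published proof of Theorem \ref{thm-46} likewise identifies the Mellin transform of $\Phi(x,z)$ on the strip $\pm\realpart{\frac{z}{2}}<\realpart{s}<1\pm\realpart{\frac{z}{2}}$ (the subtracted terms $\mu(x,z)$ and $\mu(x,-z)$ arising precisely as the residues at $s=1\pm\frac{z}{2}$), applies Parseval's identity \eqref{parseval-1} together with the transform \eqref{mell-1} of $xK_{z/2}(2\pi\alpha x)$, reassembles the Gamma and zeta factors via the duplication formula into $\xi\left(s\pm\frac{z}{2}\right)$, and then invokes the line-integral identity \eqref{formula-210} on $\realpart{s}=\tfrac12$ to produce the $\Xi$-integral with the cosine, after which invariance under $\alpha\to1/\alpha$ yields the modular relation, exactly as you describe. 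The only differences are inessential: the paper assigns the Parseval slots so that the zeta factors already appear as $\zeta\left(s\pm\frac{z}{2}\right)$, making your functional-equation round trip unnecessary, and it additionally records an independent derivation of the first equality in \eqref{thm-46id} through its self-reciprocal-function machinery (Theorem \ref{production-2} with $F\equiv 1$ plus Corollary \ref{cor5.6}), a step your route does not need.
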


The series $\sum_{n=1}^{\infty}\sigma_{-z}(n)n^{\frac{z}{2}}K_{z}(4\pi\sqrt{nx})$, along with some of its special cases, is treated at length in \cite{cohen-2010a}. The special case $z=0$ of the above theorem is interesting enough to be singled out. 

\begin{corollary}\label{cor-47}
Let $\alpha,\beta > 0$ and $\alpha\beta=1$. Then 
\begin{multline}
\sqrt{\alpha^{3}} 
\int_{0}^{\infty} x K_{0}(2 \pi \alpha x) 
\left( 2 \sum_{n=1}^{\infty} d(n) K_{0}(4 \pi \sqrt{nx}) + 
\frac{\log( 4 \pi^{2} x)}{4 \pi^{2}x} \right) dx = 
\label{corollary-47} \\
=\sqrt{\beta^{3}}\int_{0}^{\infty} x K_{0}(2 \pi \beta x) 
\left( 2 \sum_{n=1}^{\infty} d(n) K_{0}(4 \pi \sqrt{nx}) + 
\frac{\log( 4 \pi^{2} x)}{4 \pi^{2}x} \right) dx\\
=\frac{1}{128\pi^{4}} \int_{0}^{\infty} 
\Gamma^{2} \left( \frac{-1-it}{4} \right) 
\Gamma^{2} \left( \frac{-1+it}{4} \right) 
\Xi^{2} \left( \frac{t}{2} \right) \cos \left( \tfrac{1}{2} t \log \alpha 
\right) dt. 
\end{multline}
\end{corollary}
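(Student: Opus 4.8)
The plan is to obtain Corollary~\ref{cor-47} as the limiting case $z \to 0$ of Theorem~\ref{thm-46}. Although the defining expression for $\Phi(x,z)$ involves $\mu(x,\pm z)$, each of which is singular at $z=0$ because of the factor $\zeta(1\pm z)$, the combination $\mu(x,z)+\mu(x,-z)$ has a removable singularity there; granting this, both sides of \eqref{thm-46id} are analytic in $z$ on a neighbourhood of the origin inside the strip $-1 < \realpart{z} < 1$, so the identity survives passage to the limit. I would treat the two sides separately, handling the right-hand side first since it is purely algebraic.

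For the right-hand side, setting $z=0$ collapses the four gamma factors to $\Gamma^{2}\!\left(\tfrac{3+it}{4}\right)\Gamma^{2}\!\left(\tfrac{3-it}{4}\right)$, the two $\Xi$-factors to $\Xi^{2}(t/2)$, and the denominator to $(1+t^{2})^{2}$. The key step is the functional equation $\Gamma(w+1)=w\Gamma(w)$ applied with $w=\tfrac{-1\pm it}{4}$, which gives $\Gamma\!\left(\tfrac{3\pm it}{4}\right)=\tfrac{-1\pm it}{4}\,\Gamma\!\left(\tfrac{-1\pm it}{4}\right)$. Squaring and multiplying the two signs produces the factor $\tfrac{1}{256}(-1+it)^{2}(-1-it)^{2}=\tfrac{1}{256}(1+t^{2})^{2}$, which cancels the denominator $(1+t^{2})^{2}$ and converts the prefactor $\tfrac{2}{\pi^{4}}$ into $\tfrac{1}{128\pi^{4}}$. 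This reproduces the integral on the right of \eqref{corollary-47}.

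For the left-hand side, the only point requiring care is the evaluation of $\Phi(x,0)$. The series $\sum_{n\ge 1}\sigma_{-z}(n)n^{z/2}K_{z}(4\pi\sqrt{nx})$ specializes continuously to $\sum_{n\ge 1}d(n)K_{0}(4\pi\sqrt{nx})$, since $\sigma_{0}=d$ and $K_{\nu}$ is continuous in $\nu$. For the $\mu$-terms I would insert the Laurent expansions $\zeta(1+z)=\tfrac{1}{z}+\gamma+O(z)$ and $\Gamma(1+z)=1-\gamma z+O(z^{2})$, whose product $\Gamma(1+z)\zeta(1+z)$ has vanishing constant term and so equals $\tfrac{1}{z}+O(z)$. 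Combining this with the factors $x^{-1\mp z/2}$ and $(2\pi)^{-(2\pm z)}$, the symmetric sum $\mu(x,z)+\mu(x,-z)$ has its two simple poles cancel, leaving the finite limit $-\log(4\pi^{2}x)/(4\pi^{2}x)$ as $z\to 0$. Hence $\Phi(x,0)=2\sum_{n\ge 1}d(n)K_{0}(4\pi\sqrt{nx})+\log(4\pi^{2}x)/(4\pi^{2}x)$, matching the integrand on the left of \eqref{corollary-47}.

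The main obstacle I anticipate is analytic rather than computational: one must justify interchanging the limit $z\to 0$ with the $x$-integral and with the summation over $n$ on the left-hand side, and confirm that the cancellation of the two poles of $\mu(x,\pm z)$ is uniform enough (say via dominated convergence, exploiting the exponential decay of $K_{z/2}$ and $K_{z}$) to legitimize taking the limit inside. Once this continuity in $z$ is established, the equality of all three expressions in \eqref{corollary-47} follows at once from the corresponding equality in \eqref{thm-46id}.
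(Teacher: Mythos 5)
Your proposal is correct and is exactly the paper's approach: the paper presents Corollary \ref{cor-47} simply as the special case $z=0$ of Theorem \ref{thm-46}, and your computation supplies precisely the details left implicit there, namely that $\mu(x,z)+\mu(x,-z)$ has a removable singularity at $z=0$ with limit $-\log(4\pi^{2}x)/(4\pi^{2}x)$, and that the shift $\Gamma\left(\tfrac{3\pm it}{4}\right)=\tfrac{-1\pm it}{4}\,\Gamma\left(\tfrac{-1\pm it}{4}\right)$ produces the factor $(1+t^{2})^{2}/256$ cancelling the denominator and turning $\tfrac{2}{\pi^{4}}$ into $\tfrac{1}{128\pi^{4}}$. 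Both checks are numerically correct, so the corollary follows as you describe.
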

Using Vorono\"{\dotlessi}'s identity \cite[Equations (5), (6)]
{voronoi-1904a}
\begin{equation}\label{lostomega}
2\sum_{n=1}^{\infty}d(n)K_{0}(4\pi\sqrt{nx})=\frac{a}{\pi^2}\sum_{n=1}^{\infty}\frac{d(n)\log(x/n)}{x^2-n^2}-\frac{\gamma}{2}-\left(\frac{1}{4}+\frac{1}{4\pi^2x}\right)\log x-\frac{\log 2\pi}{2\pi^2x},
\end{equation}
the above transformation is written in an equivalent different form. It 
should be pointed out that these identities also appear in 
Ramanujan's Lost Notebook \cite[p.~254]{ramanujan-1988a} 
(see also \cite[Equation (4.1)]{berndt-2008a}).

This is now rephrased into yet another form. This provides an interesting 
modular-type transformation between two double integrals as shown below.
\begin{theorem}\label{jbesstra}
Let $J_{\nu}(x)$ denote the Bessel function of the first kind of order $\nu$. For $\alpha, \beta>0$, $\alpha\beta=1$, we have
\begin{align}\label{corollary-47r}
&\sqrt{\alpha}\int_{0}^{\infty}\int_{0}^{\infty}\frac{y}{(y^2+t^2)^{3/2}}\left(J_{0}(2\alpha y)+\frac{4\pi t}{e^{2\pi t}-1}\left(\frac{1}{e^{2\pi\alpha y}-1}-\frac{1}{2\pi\alpha y}\right)\right)\, dy\, dt\nonumber\\
&=\sqrt{\beta}\int_{0}^{\infty}\int_{0}^{\infty}\frac{y}{(y^2+t^2)^{3/2}}\left(J_{0}(2\beta y)+\frac{4\pi t}{e^{2\pi t}-1}\left(\frac{1}{e^{2\pi\beta y}-1}-\frac{1}{2\pi\beta y}\right)\right)\, dy\, dt\nonumber\\
&=\frac{1}{8\pi^2}\int_{0}^{\infty}\Gamma^{2}\left(\frac{-1-it}{4}\right)\Gamma^{2}\left(\frac{-1+it}{4}\right)\Xi^{2}\left(\frac{t}{2}\right)\cos\left( \frac{1}{2}t\log\alpha\right)\, dt.
\end{align}
\end{theorem}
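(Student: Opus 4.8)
My plan is to deduce Theorem \ref{jbesstra} from Corollary \ref{cor-47} rather than from the $\Xi$-integral directly. The key observation is that the right-hand side of \eqref{corollary-47r} is exactly $16\pi^{2}$ times the right-hand side of \eqref{corollary-47}: the two $\Xi$-integrals are identical and $\tfrac{1}{8\pi^{2}}=16\pi^{2}\cdot\tfrac{1}{128\pi^{4}}$. Since Corollary \ref{cor-47} is available, it therefore suffices to establish, for every $\alpha>0$, the purely real-variable identity
\begin{equation*}
\ift\ift \frac{y}{(y^{2}+t^{2})^{3/2}}\left(J_{0}(2\alpha y)+\frac{4\pi t}{e^{2\pi t}-1}\left(\frac{1}{e^{2\pi\alpha y}-1}-\frac{1}{2\pi\alpha y}\right)\right)\,dy\,dt = 16\pi^{2}\alpha\ift x\,K_{0}(2\pi\alpha x)\,\Psi(x)\,dx,
\end{equation*}
where $\Psi(x)=2\sum_{n\ge1}d(n)K_{0}(4\pi\sqrt{nx})+\log(4\pi^{2}x)/(4\pi^{2}x)$ is the function appearing in \eqref{corollary-47}. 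Once this holds for all $\alpha>0$, multiplying by $\sqrt{\alpha}$ and applying \eqref{corollary-47} gives $\sqrt{\alpha}\,(\text{double integral})=\tfrac{1}{8\pi^{2}}(\Xi\text{-integral})$; replacing $\alpha$ by $\beta=1/\alpha$ changes nothing on the right because $\cos(\tfrac12 t\log\beta)=\cos(\tfrac12 t\log\alpha)$, which produces the full three-way equality claimed in \eqref{corollary-47r}.

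To prove the displayed identity I would first carry out the inner integration over $y$. The first term is handled by the Weber--Schafheitlin evaluation
\begin{equation*}
\ift \frac{y\,J_{0}(2\alpha y)}{(y^{2}+t^{2})^{3/2}}\,dy = \frac{e^{-2\alpha t}}{t},
\end{equation*}
which is the case $\mu=0,\ \nu=\tfrac12$ of \cite[6.565.3]{gradshteyn-2007a} together with $K_{1/2}(z)=\sqrt{\pi/2z}\,e^{-z}$. For the second term I would insert the Mittag-Leffler expansion
\begin{equation*}
\frac{1}{e^{2\pi\alpha y}-1}-\frac{1}{2\pi\alpha y} = -\frac{1}{2} + \frac{1}{\pi}\sum_{k=1}^{\infty}\frac{\alpha y}{\alpha^{2}y^{2}+k^{2}},
\end{equation*}
and integrate term by term against $y/(y^{2}+t^{2})^{3/2}$, using $\ift y(y^{2}+t^{2})^{-3/2}\,dy=1/t$ for the constant piece and an elementary partial-fraction reduction for each $k$. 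The essential feature is that the $J_{0}$ contribution $e^{-2\alpha t}/t$ and the $-\tfrac12$ contribution $-\tfrac{1}{2t}\cdot\tfrac{4\pi t}{e^{2\pi t}-1}=-\tfrac{2\pi}{e^{2\pi t}-1}$ each blow up like $1/t$ as $t\to0^{+}$, but their sum is bounded since $\tfrac{e^{-2\alpha t}-1}{t}$ and $\tfrac1t-\tfrac{2\pi}{e^{2\pi t}-1}$ are both bounded there; the remaining series produces at worst an integrable logarithmic singularity at $t=0$, so the $t$-integral converges.

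The final and hardest step is to identify the resulting single integral over $t$ with $16\pi^{2}\alpha\ift x\,K_{0}(2\pi\alpha x)\,\Psi(x)\,dx$. Here I would re-expand $K_{0}(2\pi\alpha x)$ through the same Hankel formula (or through a Schläfli-type integral) to recover the $e^{-2\alpha t}$ weight, and use Vorono\"{\dotlessi}'s identity \eqref{lostomega} together with the Mellin transform \eqref{mellin-KBessel} to convert the divisor sum $\sum d(n)K_{0}(4\pi\sqrt{nx})$ and the logarithmic term into precisely the Mittag-Leffler series generated above. I expect the main obstacle to lie in exactly this matching: the $J_{0}$ piece and the Mittag-Leffler series are only conditionally convergent, so the term-by-term $y$-integration and its interchange with the $t$-integration and the $n$-summation must be justified (for instance by Abel summation, or by inserting a convergence factor $e^{-\epsilon y}$ and letting $\epsilon\to0^{+}$), with the cancellation of the $1/t$ singularities tracked carefully through these interchanges. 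An alternative and possibly cleaner route bypasses Corollary \ref{cor-47} altogether: one factors the integrand of the $\Xi$-integral as $\bigl[\,\lvert\Gamma(\tfrac{-1+it}{4})\rvert^{2}\,\Xi(t/2)\,\bigr]^{2}$ and applies the Parseval identity \eqref{parseval-1}, replacing one factor by the integral representation that supplies the kernel $y/(y^{2}+t^{2})^{3/2}$ and the other by its cosine-transform inverse, thereby producing the double integral directly at the cost of computing two Mellin/cosine transforms.
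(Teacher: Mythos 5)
Your reduction step is correct, and it is in fact exactly how the paper begins: since the $\Xi$-integral in \eqref{corollary-47r} is $16\pi^{2}$ times the one in \eqref{corollary-47}, the theorem follows from Corollary \ref{cor-47} once one proves the real-variable identity equating the double integral with $16\pi^{2}\alpha\int_{0}^{\infty}xK_{0}(2\pi\alpha x)\Psi(x)\,dx$ (this is precisely the paper's equation \eqref{ramdit}). The problem is that this identity \emph{is} the content of the theorem, and your proposal does not prove it: you explicitly label the matching of the two sides "the final and hardest step" and leave it as an expectation. A proof that defers exactly the step carrying all the difficulty is not a proof, and the paper's treatment shows how much machinery that step actually consumes: Ramanujan's Lost Notebook identity $\int_{0}^{\infty}\frac{dt}{t(e^{2\pi t}-1)(e^{2\pi x/t}-1)}=2\sum_{n\geq 1}d(n)K_{0}(4\pi\sqrt{nx})$, which is what couples the divisor sum to a product of two Bose--Einstein factors; Lemma \ref{lemdit}, a genuinely nontrivial Mellin-transform computation resting on Kloosterman's formula for $\zeta(s)/\sin\pi s$; the representation \eqref{berndtdixit} for $\gamma-\log(\pi/\alpha)$; and the Laplace-transform pair \eqref{psilog}, \eqref{jbesslap} which converts $\psi(x+1)-\log x$ into the kernel $y/(y^{2}+t^{2})^{3/2}$. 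None of these ingredients (or substitutes for them) appear in your outline.

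Moreover, the route you sketch for the matching is unlikely to go through as stated. After the Mittag-Leffler expansion and partial fractions, the inner $y$-integral produces, for each $k$, terms involving $\int_{0}^{\infty}dy/\bigl(\sqrt{y^{2}+t^{2}}\,(y^{2}+k^{2}/\alpha^{2})\bigr)$, i.e.\ logarithmic/inverse-hyperbolic expressions in $t$ and $k$; you then need to resum these against $4\pi t/(e^{2\pi t}-1)$ and recognize the result as $16\pi^{2}\alpha\int xK_{0}(2\pi\alpha x)\Psi(x)\,dx$, but no mechanism is offered for this beyond invoking Vorono\"{\dotlessi}'s identity \eqref{lostomega} and "Mellin transforms." On the other side, the integrals $\int_{0}^{\infty}xK_{0}(2\pi\alpha x)K_{0}(4\pi\sqrt{nx})\,dx$ (or, after applying \eqref{lostomega}, integrals of $x^{2}K_{0}(2\pi\alpha x)\log(x/n)/(x^{2}-n^{2})$) are never evaluated, and they are not standard; the paper avoids them entirely by trading the divisor sum for the double Bose--Einstein integral \emph{before} integrating against $K_{0}$. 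A cleaner repair of your plan, closer to the paper, is to keep the inner $y$-integral intact and write the kernel as $y/(y^{2}+t^{2})^{3/2}=\int_{0}^{\infty}e^{-yu}J_{0}(tu)\,u\,du$ (a case of \eqref{jbesslap}), use \eqref{psilog} to convert the Bose--Einstein factor in $y$ into $\log x-\psi(x+1)$, and then invoke Lemma \ref{lemdit} to pass to $\int_{0}^{\infty}xK_{0}(2\pi\alpha x)(e^{2\pi x/t}-1)^{-1}dx$ — but Lemma \ref{lemdit} and Ramanujan's identity must then be proved or cited, and they are absent from your proposal.
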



\smallskip 

An identity of the same type as \eqref{rama-44} was established by W. L. Ferrar 
\cite{ferrar-1936a}, written below in the form provided in \cite{dixit-2013f}.
For $\alpha, \beta>0, \alpha\beta=1$, we have:
\begin{multline}\label{ferid}
\sqrt{\alpha} \int_{0}^{\infty} e^{ - \pi \alpha^{2} x^{2}} 
\left( \sum_{n=1}^{\infty} K_{0}(2 \pi n x) - \frac{1}{4x} \right) dx  \\
=\sqrt{\beta} \int_{0}^{\infty} e^{ - \pi \beta^{2} x^{2}} 
\left( \sum_{n=1}^{\infty} K_{0}(2 \pi n x) - \frac{1}{4x} \right) dx\\
= \frac{-1}{2 \pi^{3/2}} \int_{0}^{\infty} 
\Gamma \left( \frac{1+it}{4} \right)
\Gamma \left( \frac{1-it}{4} \right)
\Xi \left( \frac{t}{2} \right) 
\frac{\cos( \tfrac{1}{2} t \log \alpha) \, dt}{1+t^{2}}.
\end{multline}

\smallskip

The next theorem gives a generalization of Ferrar's result \eqref{ferid}.

\begin{theorem}\label{fergenthm}
Assume $-1 < \realpart{z} < 1$ and define 
\begin{multline}\label{frakf}
\mathfrak{F}(x,z) = x^{z/2} \Gamma \left( \frac{1+z}{2} \right) 
\left( \left[ 3 \gamma + 2 \log x + \psi \left( \tfrac{1}{2}(1+z) \right) 
\right] \zeta(1+z) + 2 \zeta'(1+z) \right) \\
\quad - \sqrt{\pi} \, \Gamma \left( \frac{z}{2} \right) \zeta(1-z)
x^{-z/2} + 
2 x^{z/2} \Gamma \left( \frac{1+z}{2} \right) 
\sum_{n=1}^{\infty} \sigma_{-z}(n) 
\left( \frac{n^{z}}{(x^{2}+n^{2})^{\tfrac{1+z}{2}} } - \frac{1}{n} 
\right).
\end{multline}
\noindent
Then, for $\alpha, \beta > 0$, $\alpha\beta=1$, 
\begin{multline}\label{fergen}
\sqrt{\alpha} \int_{0}^{\infty} K_{z/2}(2 \pi \alpha x) \mathfrak{F}(x,z) dx=\sqrt{\beta} 
\int_{0}^{\infty} K_{z/2}(2 \pi \beta x) \mathfrak{F}(x,z) dx\\
= \frac{8}{\pi} \int_{0}^{\infty} 
\Gamma \left( \frac{z+1+it}{4} \right) 
\Gamma \left( \frac{z+1-it}{4} \right) 
\Xi \left( \frac{t + iz}{2} \right)
\Xi \left( \frac{t - iz}{2} \right) \\
\quad \times \frac{\cos( \tfrac{1}{2} t \log \alpha ) dt}
{(t^{2} + (z+1)^{2})(t^{2} + (z-1)^{2})}.
\end{multline}
\end{theorem}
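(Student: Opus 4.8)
The plan is to establish the second equality in \eqref{fergen}---the representation of $\sqrt{\alpha}\int_0^\infty K_{z/2}(2\pi\alpha x)\mathfrak{F}(x,z)\,dx$ as the $\Xi$-integral---since the first equality (the modular relation $\alpha\leftrightarrow\beta$) is then immediate: the right-hand $\Xi$-integral depends on $\alpha$ only through $\cos(\tfrac12 t\log\alpha)$, which is invariant under $\alpha\mapsto1/\alpha=\beta$, exactly as in the scheme \eqref{method-1}--\eqref{method-2}. So the whole content is the integral evaluation, and I would prove it by the Mellin--Parseval method used throughout the paper.

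First I would put the $\Xi$-integral into Mellin--Barnes form. Writing $\Xi\!\left(\tfrac{t\pm iz}{2}\right)=\xi\!\left(\tfrac{1\mp z}{2}+\tfrac{it}{2}\right)$ and substituting $s=\tfrac12+\tfrac{it}{2}$, the two $\Xi$-factors become $\xi(s-\tfrac z2)\,\xi(s+\tfrac z2)$, the Gamma factors become $\Gamma(\tfrac s2+\tfrac z4)$ and $\Gamma(\tfrac{1-s}{2}+\tfrac z4)$, and expanding the cosine into $\tfrac12(\alpha^{it/2}+\alpha^{-it/2})$ produces the weight $\alpha^{s}/\sqrt\alpha$ on the line $\realpart s=\tfrac12$. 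Since the function $f(z,t)=\Gamma(\tfrac{z+1+it}{4})\Gamma(\tfrac{z+1-it}{4})/\{(t^2+(z+1)^2)(t^2+(z-1)^2)\}$ is even in $t$ and factors in the form \eqref{factorization-0}, the Parseval/master formula of the paper applies and turns the $\Xi$-integral into $\tfrac{1}{\sqrt\alpha}\int_{(1/2)}(\cdots)\alpha^s\,ds$. Now insert $\xi(w)=\tfrac12 w(w-1)\pi^{-w/2}\Gamma(\tfrac w2)\zeta(w)$: the polynomial $\tfrac14(s-\tfrac z2)(s-\tfrac z2-1)(s+\tfrac z2)(s+\tfrac z2-1)$ coming from the two $\xi$'s cancels the denominator $(t^2+(z+1)^2)(t^2+(z-1)^2)=16(s-\tfrac z2)(s-\tfrac z2-1)(s+\tfrac z2)(s+\tfrac z2-1)$ exactly (leaving the constant $\tfrac1{64}$), and one is left with a clean Barnes integrand $\Gamma(\tfrac s2+\tfrac z4)^2\,\Gamma(\tfrac{1-s}{2}+\tfrac z4)\,\Gamma(\tfrac s2-\tfrac z4)\,\zeta(s-\tfrac z2)\zeta(s+\tfrac z2)\,\pi^{-s}\alpha^{s}$.

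Next I would split off the Bessel function. By \eqref{mellin-KBessel} with $\nu=z/2$ and $a=2\pi\alpha$, the pair $\Gamma(\tfrac s2-\tfrac z4)\Gamma(\tfrac s2+\tfrac z4)$ is precisely (up to the factor $2^{s-2}(2\pi\alpha)^{-s}$) the Mellin transform of $K_{z/2}(2\pi\alpha x)$; pairing it off by Parseval rewrites the Barnes integral as $\sqrt\alpha\int_0^\infty K_{z/2}(2\pi\alpha x)\,g(x,z)\,dx$, where $g$ is the inverse Mellin transform of the surviving factor $\Gamma(\tfrac s2+\tfrac z4)\Gamma(\tfrac{1-s}{2}+\tfrac z4)\zeta(s-\tfrac z2)\zeta(s+\tfrac z2)\pi^{-s}$ (times constants). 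It then remains to identify $g=\mathfrak{F}$. For this I would move the line to $\realpart s$ large, where $\zeta(s-\tfrac z2)\zeta(s+\tfrac z2)=\sum_{n\ge1}\sigma_{-z}(n)n^{z/2}n^{-s}$, and evaluate term by term: the Beta-integral $\int_0^\infty x^{u-1}(x^2+n^2)^{-(1+z)/2}\,dx=\tfrac12 n^{u-1-z}\Gamma(\tfrac u2)\Gamma(\tfrac{1+z-u}{2})/\Gamma(\tfrac{1+z}{2})$ reproduces exactly the series $x^{z/2}\Gamma(\tfrac{1+z}{2})\sum_n\sigma_{-z}(n)\,n^{z}(x^2+n^2)^{-(1+z)/2}$ in \eqref{frakf} (the $\Gamma(\tfrac{1+z}{2})$ in its denominator being cancelled by the prefactor of $\mathfrak{F}$), while the poles crossed in the shift---at $s=1-\tfrac z2$ and $s=1+\tfrac z2$, where $\zeta(s+\tfrac z2)$ and $\zeta(s-\tfrac z2)$ respectively have their simple pole---produce the non-series terms: the $x^{-z/2}$ term $-\sqrt\pi\,\Gamma(\tfrac z2)\zeta(1-z)$ and the $x^{z/2}$ bracket $[3\gamma+2\log x+\psi(\tfrac12(1+z))]\zeta(1+z)+2\zeta'(1+z)$.

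The main obstacle is precisely this last matching. The $x^{z/2}\log x$, $\psi(\tfrac12(1+z))$, $3\gamma$ and $\zeta'(1+z)$ contributions do not come from a naive residue but from expanding the full integrand---the $\zeta(1)$-pole tail together with the factors $\Gamma(\tfrac s2\pm\tfrac z4)$, $\pi^{-s}$ and $x^{-s}$---to first order about the pole, so one must carefully track the expansion $\zeta(w)=\tfrac1{w-1}+\gamma+\cdots$ and the derivatives of $\Gamma$ and of $x^{-s}$ (the latter supplying $\log x$). Subtracting the $-1/n$ regularising term in \eqref{frakf} is what makes the interchange of sum and integral legitimate and absorbs the otherwise-divergent constant, and keeping the bookkeeping of the constant $\tfrac{8}{\pi}$ consistent through all the factors of $2$, $\pi$ and $\tfrac1{64}$ is where errors are easiest to make. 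Beyond this, the routine analytic work is the usual Stirling estimate on vertical lines to justify the contour shifts and the vanishing of the horizontal segments, and a final appeal to analytic continuation in $z$ to cover the full strip $-1<\realpart z<1$ from the real case.
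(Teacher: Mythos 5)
Your proposal is correct and is essentially the route the paper itself indicates for this theorem (its proof sketch explicitly offers exactly this alternative): convert the $\Xi$-integral into a Barnes integral via \eqref{formula-210} with $\phi(z,s)=\Gamma\bigl(\tfrac{z+1}{4}+\tfrac{s}{2}\bigr)/\{(2s+z+1)(2s+z-1)\}$, cancel the polynomial factors against the two $\xi$'s, peel off the Mellin transform of the $K$-Bessel function by Parseval, identify the remaining inverse Mellin transform with $\mathfrak{F}$ by shifting the contour, expanding via \eqref{expansion-sigma}, evaluating the Beta integrals termwise, and computing the residues at $s=1\pm\tfrac{z}{2}$ (with the double pole at $s=1+\tfrac{z}{2}$, coming from $\zeta(s-\tfrac z2)$ together with $\Gamma(\tfrac{1-s}{2}+\tfrac z4)$, supplying the $\log x$, $3\gamma$, $\psi$ and $\zeta'$ terms), and finally invoke the invariance under $\alpha\mapsto 1/\alpha$. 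One bookkeeping correction: after removing $2^{s-2}(2\pi\alpha)^{-s}\Gamma(\tfrac s2-\tfrac z4)\Gamma(\tfrac s2+\tfrac z4)$ from the Barnes integrand, the surviving factor is $4\,\Gamma(\tfrac s2+\tfrac z4)\Gamma(\tfrac{1-s}{2}+\tfrac z4)\zeta(s-\tfrac z2)\zeta(s+\tfrac z2)\,\alpha^{2s}$ rather than your $(\cdots)\pi^{-s}$, so the direct pairing produces the $\beta$-side $\sqrt{\beta}\int_0^\infty K_{z/2}(2\pi\beta x)\mathfrak{F}(x,z)\,dx$ (the $\alpha^{2s}$ enacts the inversion $x\mapsto x/\alpha^{2}$); this is harmless, since the $\alpha\leftrightarrow\beta$ symmetry you already invoke then gives both equalities, or equivalently one first converts $\zeta(s\mp\tfrac z2)$ into $\zeta(1-s\mp\tfrac z2)$ by the functional equation \eqref{functional-zeta} — which is precisely the mechanism packaged in the paper's Theorem \ref{production-1} with $F(s,z)=\Gamma(\tfrac s2+\tfrac z4)\Gamma(\tfrac12-\tfrac s2+\tfrac z4)$, its stated primary proof — to land directly on the $\alpha$-side.
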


%

\begin{note}
The special value $z=0$ gives the identity
\begin{multline*}
\sqrt{\alpha} \int_{0}^{\infty} 
K_{0}(2 \pi \alpha x) \mathfrak{F}(x,0) dx =  \sqrt{\beta} \int_{0}^{\infty} 
K_{0}(2 \pi \beta x) \mathfrak{F}(x,0) dx\\
=\frac{8}{\pi} \int_{0}^{\infty} 
\Gamma \left( \frac{1+it}{4} \right) 
\Gamma \left( \frac{1-it}{4} \right) 
\frac{\Xi^{2} \left( t/2 \right)}{(1+t^{2})^{2}} 
\cos( \tfrac{1}{2} t \log \alpha ) \, dt,
\end{multline*}
\noindent
where the function $\mathfrak{F}(x,0)$ has the explicit form 
\begin{eqnarray}
\mathfrak{F}(x,0) & = & \frac{\pi^{2}}{24} + \frac{\gamma^{2}}{2} - 
\gamma \log 2 + \frac{\log^{2}2}{4} - \gamma_{1} + \gamma \log x \\
& + & \frac{\log x}{4} \log \left( \frac{x}{4} \right) + 
\frac{1}{2} \sum_{n=1}^{\infty} d(n) 
\left( \frac{1}{\sqrt{n^{2}+x^{2}}} - \frac{1}{n} \right),
\nonumber
\end{eqnarray}
\noindent
and where $\gamma, \, \gamma_{1}$ and $d(n)$ are as before. 
\end{note}

%
%

\section{Proofs}
\label{sec-proofs}

The Mellin transform
\begin{equation}
F(s) := \mathfrak{M}[f;s] = \int_{0}^{\infty} x^{s-1} f(x) dx 
\end{equation}
\noindent
is defined for a locally integrable function $f$. The existence of $F$ 
depends on the 
asymptotic behavior of $f$ at $x=0$ and $\infty$. In detail, if 
\begin{equation}
f(x) = \begin{cases}
O(x^{-a-\varepsilon}) & \quad \text{ as } x \to 0^{+}, \\
O(x^{-b+\varepsilon}) & \quad \text{ as } x \to + \infty,
\end{cases}
\end{equation}
\noindent
where $\varepsilon > 0$ and $a < b$, then $F(s)$ is an analytic function 
in the strip $a < \realpart{s} < b$. The properties of the Mellin transform 
used here include the inversion formula 
\begin{equation}
f(x) = \frac{1}{2 \pi i} \int_{c - i \infty}^{c + i \infty} 
x^{-s} F(s) ds
\end{equation}
\noindent
and Parseval's identity 
\begin{equation}
\int_{0}^{\infty} f(x) g(x) dx = \frac{1}{2 \pi i } 
\int_{c - i \infty}^{c + i \infty} F(1-s)G(s) ds,
\label{parseval-1}
\end{equation}
\noindent 
where the vertical line $\realpart{s} = c$ lies in the common strip of 
analyticity of the Mellin transforms $F(1-s)$ and $G(s)$. See \cite[p.~83]{paris-2001a} for the conditions
on the validity of this formula. A variant of this 
identity is
\begin{equation}
\int_{0}^{\infty} f(t) g \left( \frac{x}{t} \right) \frac{dt}{t} = 
\frac{1}{2 \pi i } \int_{c - i \infty}^{c + i \infty} 
x^{-s} F(s)G(s) ds.
\label{parseval-2}
\end{equation}
\noindent
For more details, see pages $79-83$ in \cite{paris-2001a}. 

\smallskip

The basic idea in the first method employs self-reciprocal functions. These 
are functions that are reproduced after integration against a kernel.
The key formulas required here are the remarkable identities 
derived by Koshlyakov \cite{koshlyakov-1938a} for $\tfrac{-1}{2}<\nu<\tfrac{1}{2}$:
\begin{equation}
\int_{0}^{\infty} K_{\nu}(t) \left( \cos(\nu \pi) M_{2 \nu}(2 \sqrt{xt}) -
\sin(\nu \pi) J_{2 \nu}(2 \sqrt{xt}) \right) dt = K_{\nu}(x),\label{koshlyakov-1}
\end{equation}
\noindent
and 
\begin{equation}
\int_{0}^{\infty} tK_{\nu}(t) \left( \sin(\nu \pi) J_{2 \nu}(2 \sqrt{xt}) -
\cos(\nu \pi) L_{2 \nu}(2 \sqrt{xt}) \right) dt = xK_{\nu}(x),\label{koshlyakov-2}
\end{equation}
\noindent 
where 
\begin{equation}
M_{\nu}(x) = \frac{2}{\pi} K_{\nu}(x) - Y_{\nu}(x) \text{ and }
L_{\nu}(x) = - \frac{2}{\pi} K_{\nu}(x) - Y_{\nu}(x)
\end{equation}
\noindent 
are the functions introduced by G.~H.~Hardy. It is easy to see that the above 
identities actually hold for  $ - \tfrac{-1}{2}<$ Re $\nu<\tfrac{1}{2}$.

\begin{lemma}
\label{lemma1}
Assume $ \pm \realpart{ \frac{z}{2}} < \realpart{s} < \tfrac{3}{4}$ and 
$y > 0$. Then 
\begin{multline}
\int_{0}^{\infty} x^{s-1} \left( \cos \left( \tfrac{1}{2}\pi z \right) 
M_{z}( 4 \pi \sqrt{xy}) - \sin \left( \tfrac{1}{2} \pi z \right) 
J_{z}(4 \pi \sqrt{xy}) \right) dx \\
= \frac{1}{2^{2s}\pi^{1+2s} y^{s}} 
\Gamma \left( s - \frac{z}{2} \right)
\Gamma \left( s + \frac{z}{2} \right)
\left( \cos \left( \tfrac{1}{2}\pi z \right) + \cos(\pi s) \right).
\end{multline}
\end{lemma}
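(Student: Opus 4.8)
The plan is to reduce the integral to a combination of three standard Mellin transforms of Bessel functions. First I would substitute $u = 4\pi\sqrt{xy}$, which turns $\int_{0}^{\infty} x^{s-1}(\cdots)\,dx$ into a constant multiple of $\int_{0}^{\infty} u^{2s-1}(\cdots)\,du$; a short computation gives the prefactor $1/(2^{4s-1}\pi^{2s}y^{s})$ and replaces the evaluation point by $w := 2s$. The condition $\realpart{s} < \tfrac34$ in the statement is then exactly $\realpart{w} < \tfrac32$, which is the natural upper endpoint for the Bessel Mellin transforms below, while $\pm\realpart{\tfrac{z}{2}} < \realpart{s}$ becomes $|\realpart{z}| < \realpart{w}$.

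Next I would unfold $M_{z} = \tfrac2\pi K_{z} - Y_{z}$ and treat the three pieces separately. The $K_{z}$ transform is precisely \eqref{mellin-KBessel} with $a=1$, giving $\int_{0}^{\infty} u^{w-1}K_{z}(u)\,du = 2^{w-2}\Gamma(\tfrac{w-z}{2})\Gamma(\tfrac{w+z}{2})$, and I would invoke the analogous tabulated formulas for $J_{z}$ and $Y_{z}$. Writing $P := \Gamma(\tfrac{w+z}{2})\Gamma(\tfrac{w-z}{2})$ and using the reflection formula $\Gamma(a)\Gamma(1-a) = \pi/\sin(\pi a)$ to eliminate the denominator gamma factor in the $J_{z}$ transform (and rewriting $Y_{z}$ through $Y_{z} = (J_{z}\cos(\pi z) - J_{-z})/\sin(\pi z)$), I would bring all three to the common shape $\tfrac{2^{w-1}}{\pi}P$ times a trigonometric factor: namely $\tfrac{2^{w-1}}{\pi}P\sin(\tfrac{\pi(w-z)}{2})$ for $J_{z}$ and $-\tfrac{2^{w-1}}{\pi}P\cos(\tfrac{\pi(w-z)}{2})$ for $Y_{z}$, so that the transform of $M_{z}$ becomes $\tfrac{2^{w-1}}{\pi}P\,(1+\cos(\tfrac{\pi(w-z)}{2}))$.

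Assembling the Koshlyakov combination then leaves the trigonometric expression
\[
\cos\left(\tfrac{\pi z}{2}\right)\left(1+\cos\left(\tfrac{\pi(w-z)}{2}\right)\right) - \sin\left(\tfrac{\pi z}{2}\right)\sin\left(\tfrac{\pi(w-z)}{2}\right),
\]
and the decisive step is the collapse $\cos C\,(1+\cos D) - \sin C \sin D = \cos C + \cos(C+D)$ with $C = \tfrac{\pi z}{2}$ and $C+D = \tfrac{\pi w}{2}$; this produces exactly $\cos(\tfrac{\pi z}{2}) + \cos(\pi s)$ once $w=2s$. Multiplying by the prefactor $1/(2^{4s-1}\pi^{2s}y^{s})$ and rewriting $P = \Gamma(s-\tfrac z2)\Gamma(s+\tfrac z2)$ gives the claimed right-hand side, the constants combining to $1/(2^{2s}\pi^{1+2s}y^{s})$.

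I expect the main obstacle to be bookkeeping rather than conceptual: getting the sign of the $Y_{z}$ transform right and verifying the trigonometric collapse, and—more importantly—justifying that all three tabulated transforms are simultaneously valid on the stated strip. Since $J_{z}$ and $Y_{z}$ decay only like $u^{-1/2}$ with oscillation, the relevant integrals converge merely conditionally, so I would check that the common strip $|\realpart{z}| < \realpart{2s} < \tfrac32$ is nonempty (guaranteed by the standing assumption $-1 < \realpart{z} < 1$) and that the termwise splitting of the conditionally convergent integrand is thereby legitimate; the $K_{z}$ piece converges absolutely and poses no difficulty.
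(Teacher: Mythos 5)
Your proposal is correct and follows essentially the same route as the paper's proof: both decompose $M_z = \tfrac{2}{\pi}K_z - Y_z$ and reduce everything to the tabulated Mellin transforms of $K_z$, $Y_z$ and $J_z$ (the paper applies them directly to $x^{s-1}$ against the argument $4\pi\sqrt{xy}$, which is your substitution $u = 4\pi\sqrt{xy}$ in disguise), use the reflection formula to turn the $J_z$ gamma quotient into $\Gamma\left(s-\tfrac{z}{2}\right)\Gamma\left(s+\tfrac{z}{2}\right)\sin\left(\pi s - \tfrac{\pi z}{2}\right)/\pi$, and finish with the same trigonometric collapse to $\cos\left(\tfrac{\pi z}{2}\right)+\cos(\pi s)$. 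Your constants, prefactor bookkeeping, and strip conditions all check out against the lemma.
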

\begin{proof}
The Mellin transform of 
the modified Bessel function $K_{z}(x)$, given in \eqref{mellin-KBessel}, is 
\begin{equation}
\int_{0}^{\infty} x^{s-1} K_{z}(ax)dx = 2^{s-2} a^{-s} 
\Gamma \left( \frac{s-z}{2} \right)
\Gamma \left( \frac{s+z}{2} \right)
\label{mellin-K}
\end{equation}
\noindent
for $\realpart{s} > \pm \realpart{z}$ and $a > 0$. 
This yields 
\begin{equation}
\int_{0}^{\infty} x^{s-1} \frac{2}{\pi} K_{z}(4 \pi \sqrt{xy}) dx = 
2^{-2s}\pi^{-1-2s}y^{-s} 
\Gamma \left( s - \frac{z}{2} \right)
\Gamma \left( s + \frac{z}{2} \right)
\end{equation}
for Re $s>\pm$ Re $\left(\frac{z}{2}\right)$ and $y>0$.

The Bessel function of the second kind $Y_{z}(x)$, satisfies 
\cite[p. 93, formula 
10.2]{oberhettinger-1974a} 
\begin{equation*}
\int_{0}^{\infty} x^{s-1} Y_{z}(ax)dx = - \frac{1}{\pi}2^{s-1} a^{-s} 
\cos \left( \tfrac{\pi}{2}(s-z) \right) 
\Gamma \left( \frac{s-z}{2} \right)
\Gamma \left( \frac{s+z}{2} \right),
\end{equation*}
\noindent
for $\pm$ Re $z<$ Re $s<\frac{3}{2}$, which produces 
\begin{equation*}
\int_{0}^{\infty} x^{s-1} Y_{z}(4 \pi \sqrt{xy})dx = 
- \pi^{-1-2s}2^{-2s} y^{-s} 
\cos \left( \pi s - \tfrac{1}{2} \pi z  \right) 
\Gamma \left( s - \frac{z}{2} \right)
\Gamma \left( s + \frac{z}{2} \right),
\end{equation*}
\noindent 
for $\pm \realpart{\frac{z}{2}} < \realpart{s} < \tfrac{3}{4}$. Thus for $\pm \realpart{ \frac{z}{2}} < \realpart{s} < \tfrac{3}{4}$,
\begin{multline*}
\int_{0}^{\infty} x^{s-1} \cos \left( \tfrac{1}{2} \pi z \right) 
M_{z}( 4 \pi \sqrt{xy}) dx \\
= 2^{-2s} \pi^{-1-2s} y^{-s} 
\Gamma \left( s - \frac{z}{2} \right)
\Gamma \left( s + \frac{z}{2} \right)
\cos \left( \tfrac{1}{2} \pi z \right) 
\left( 1 + \cos \left( \pi s - \tfrac{1}{2} \pi z \right) \right).
\end{multline*}

Similarly, using 
\cite[p. 93, formula 10.1]{oberhettinger-1974a}, 
\begin{equation}
\label{Mellin-besselJ}
\int_{0}^{\infty} x^{s-1} J_{z}(ax) \, dx = 
\frac{1}{2} \left( \frac{a}{2} \right)^{-s} \frac{\Gamma \left( \frac{s+z}{2}
\right)}{\Gamma \left( 1 + \frac{z-s}{2} \right)}
\end{equation}
for $- \realpart{z}< \realpart{s}<\frac{3}{2}$, yields 
\begin{equation*}
\int_{0}^{\infty} x^{s-1} J_{z}( 4 \pi \sqrt{xy} ) dx = 
2^{-2s} \pi^{-2s} y^{-s} 
\frac{\Gamma \left( s + \tfrac{z}{2} \right)}
{\Gamma \left( 1 - s + \tfrac{z}{2} \right)}
\end{equation*}
for $-\realpart{\frac{z}{2}} < \realpart{s}<\frac{3}{4}$. Combining
these evaluations completes the proof.
\end{proof}

Similar arguments prove the next result. 

\begin{lemma}
\label{lemma2}
Assume $ \pm \realpart{ \frac{z}{2}} < \realpart{s} < \tfrac{3}{4}$ and $y > 0$. Then 
\begin{multline}
\int_{0}^{\infty} x^{s-1} \left( \sin \left( \tfrac{1}{2}\pi z \right) 
J_{z}( 4 \pi \sqrt{xy}) - \cos \left( \tfrac{1}{2} \pi z \right) 
L_{z}(4 \pi \sqrt{xy}) \right) dx \\
= \frac{1}{2^{2s}\pi^{1+2s} y^{s}} 
\Gamma \left( s - \frac{z}{2} \right)
\Gamma \left( s + \frac{z}{2} \right)
\left( \cos \left( \tfrac{1}{2}\pi z \right) - \cos(\pi s) \right).
\end{multline}
\end{lemma}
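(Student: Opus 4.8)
The plan is to follow the proof of Lemma \ref{lemma1} essentially verbatim, swapping $M_z$ for $L_z$ and tracking the induced sign changes. Since $L_z(x) = -\tfrac{2}{\pi}K_z(x) - Y_z(x)$, the three building blocks are exactly the Mellin transforms already computed in the proof of Lemma \ref{lemma1}: the transform of $\tfrac{2}{\pi}K_z(4\pi\sqrt{xy})$ and of $Y_z(4\pi\sqrt{xy})$ coming from \eqref{mellin-K}, and the transform of $J_z(4\pi\sqrt{xy})$ coming from \eqref{Mellin-besselJ}. First I would assemble, from the $K_z$ and $Y_z$ transforms,
\[
\int_0^\infty x^{s-1} L_z(4\pi\sqrt{xy})\,dx = 2^{-2s}\pi^{-1-2s}y^{-s}\Gamma\left(s-\tfrac{z}{2}\right)\Gamma\left(s+\tfrac{z}{2}\right)\left(\cos\left(\pi s - \tfrac{1}{2}\pi z\right) - 1\right),
\]
where the $-1$ comes from the $-\tfrac{2}{\pi}K_z$ term and the cosine from the $-Y_z$ term.

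The one genuinely new manipulation is rewriting the $J_z$ transform into symmetric form. From \eqref{Mellin-besselJ} the transform of $J_z(4\pi\sqrt{xy})$ carries the factor $\Gamma(s+\tfrac{z}{2})/\Gamma(1-s+\tfrac{z}{2})$, rather than the symmetric product $\Gamma(s-\tfrac{z}{2})\Gamma(s+\tfrac{z}{2})$ appearing on the right-hand side. To reconcile them I would apply the reflection formula $\Gamma(s-\tfrac{z}{2})\Gamma(1-s+\tfrac{z}{2}) = \pi/\sin\!\left(\pi(s-\tfrac{z}{2})\right)$, which turns $1/\Gamma(1-s+\tfrac{z}{2})$ into $\pi^{-1}\sin(\pi s - \tfrac{1}{2}\pi z)\,\Gamma(s-\tfrac{z}{2})$. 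After this step both the $J_z$ and the $L_z$ contributions share the common prefactor $2^{-2s}\pi^{-1-2s}y^{-s}\Gamma(s-\tfrac{z}{2})\Gamma(s+\tfrac{z}{2})$, and the whole computation reduces to a scalar trigonometric identity.

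Finally I would collect that trigonometric bracket. Writing $A = \tfrac{1}{2}\pi z$ and $B = \pi s - \tfrac{1}{2}\pi z$, so that $A+B = \pi s$, the combination $\sin\!\left(\tfrac{1}{2}\pi z\right)J_z - \cos\!\left(\tfrac{1}{2}\pi z\right)L_z$ produces the factor
\[
\sin A\,\sin B + \cos A\,(1 - \cos B) = \cos A + (\sin A\,\sin B - \cos A\,\cos B) = \cos\left(\tfrac{1}{2}\pi z\right) - \cos(\pi s),
\]
using $\sin A\,\sin B - \cos A\,\cos B = -\cos(A+B) = -\cos(\pi s)$. This is precisely the claimed bracket, completing the proof. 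I expect no serious obstacle here: the only delicate point is the bookkeeping of signs, since relative to Lemma \ref{lemma1} both the $Y_z$ contribution (through $L_z$ in place of $M_z$) and the $J_z$ contribution change sign, which is exactly what flips the $+\cos(\pi s)$ of Lemma \ref{lemma1} into the $-\cos(\pi s)$ required here, while the $K_z$ term still delivers the common $\cos(\tfrac{1}{2}\pi z)$. The strip $\pm\realpart{\frac{z}{2}} < \realpart{s} < \tfrac{3}{4}$ is inherited unchanged from the three component transforms.
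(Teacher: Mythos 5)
Your proposal is correct and is exactly what the paper intends: the paper dismisses this lemma with ``Similar arguments prove the next result,'' meaning precisely the rerun of the Lemma \ref{lemma1} computation that you carry out --- the same three Mellin transforms \eqref{mellin-K}, the $Y_z$ transform, and \eqref{Mellin-besselJ}, with the reflection formula symmetrizing the $J_z$ term and the sign flips from $L_z = -\tfrac{2}{\pi}K_z - Y_z$ converting $+\cos(\pi s)$ into $-\cos(\pi s)$. Your sign bookkeeping, trigonometric collapse to $\cos\left(\tfrac{1}{2}\pi z\right) - \cos(\pi s)$, and the strip $\pm\realpart{\tfrac{z}{2}} < \realpart{s} < \tfrac{3}{4}$ are all accurate.
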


The next statement shows how to produce functions self-reciprocal with 
respect to the kernel \eqref{kernel-100}.

\begin{theorem}
\label{production-1}
Assume $\pm \realpart{\frac{z}{2}} < c = \realpart{s} < 1 \pm 
\realpart{\frac{z}{2}}$. Define $f(x, z)$ by 
\begin{equation}
f(x,z) = \frac{1}{2 \pi i} \int_{c - i \infty}^{c + i \infty} 
x^{-s} F(s,z) \zeta(1 - s -z/2) \zeta(1-s + z/2) \, ds,
\end{equation}
where $F(s,z)$ is a function satisfying $F(s,z) = F(1-s,z)$ and is such that the above integral converges.
\noindent
Then $f$ is self-reciprocal (as a function of $x$) with respect to the kernel 
\begin{equation}
\label{kernel-100}
2 \pi \left( \cos \left( \tfrac{1}{2} \pi z \right) M_{z}( 4 \pi \sqrt{xy}) -
\sin \left( \tfrac{1}{2} \pi z \right) J_{z}( 4 \pi \sqrt{xy}) \right),
\end{equation}
\noindent
that is,
\begin{equation}\label{thm5.3fyz}
f(y,z) = 2 \pi \int_{0}^{\infty} f(x,z) 
\left[ \cos \left( \tfrac{1}{2} \pi z \right) M_{z}( 4 \pi \sqrt{xy}) -
\sin \left( \tfrac{1}{2} \pi z \right) J_{z}( 4 \pi \sqrt{xy}) \right] dx.
\end{equation}
\end{theorem}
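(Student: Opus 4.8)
The plan is to feed the Mellin--Barnes definition of $f(x,z)$ into the right-hand side of \eqref{thm5.3fyz}, interchange the two integrations, and collapse the result using Lemma~\ref{lemma1} together with the functional equation of $\zeta$ applied twice. The conceptual content is entirely in one short computation; the rest is bookkeeping and convergence.

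First I would substitute
\[
f(x,z)=\frac{1}{2\pi i}\int_{c-i\infty}^{c+i\infty}x^{-s}\,F(s,z)\,\zeta\!\left(1-s-\tfrac{z}{2}\right)\zeta\!\left(1-s+\tfrac{z}{2}\right)ds
\]
into $2\pi\int_{0}^{\infty}f(x,z)\bigl[\cos(\tfrac12\pi z)M_{z}(4\pi\sqrt{xy})-\sin(\tfrac12\pi z)J_{z}(4\pi\sqrt{xy})\bigr]\,dx$ and, assuming absolute convergence, exchange the order of integration. The inner $x$-integral is then $\int_{0}^{\infty}x^{-s}\bigl[\cos(\tfrac12\pi z)M_{z}(4\pi\sqrt{xy})-\sin(\tfrac12\pi z)J_{z}(4\pi\sqrt{xy})\bigr]\,dx$, which is exactly the integral of Lemma~\ref{lemma1} with its parameter replaced by $1-s$, since $x^{-s}=x^{(1-s)-1}$. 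Evaluating it and using $\cos(\pi(1-s))=-\cos(\pi s)$ produces the factor
\[
\frac{1}{2^{2(1-s)}\pi^{1+2(1-s)}y^{1-s}}\,\Gamma\!\left(1-s-\tfrac{z}{2}\right)\Gamma\!\left(1-s+\tfrac{z}{2}\right)\bigl(\cos(\tfrac12\pi z)-\cos(\pi s)\bigr).
\]

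Next I would make the substitution $s\mapsto 1-s$, which restores the kernel $y^{-s}$, turns $F(s,z)$ into $F(1-s,z)=F(s,z)$, and replaces the two $\zeta$-factors coming from the definition of $f$ by $\zeta(s-\tfrac z2)\,\zeta(s+\tfrac z2)$. The integrand becomes $F(s,z)\,y^{-s}$ times
\[
2(2\pi)^{-2s}\,\Gamma\!\left(s-\tfrac z2\right)\Gamma\!\left(s+\tfrac z2\right)\bigl(\cos(\tfrac12\pi z)+\cos(\pi s)\bigr)\zeta\!\left(s-\tfrac z2\right)\zeta\!\left(s+\tfrac z2\right).
\]
The crucial identity is that this last quantity equals $\zeta(1-s-\tfrac z2)\,\zeta(1-s+\tfrac z2)$. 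To see it, apply the asymmetric functional equation $\zeta(1-w)=2(2\pi)^{-w}\cos(\tfrac12\pi w)\Gamma(w)\zeta(w)$ (a consequence of \eqref{functional-zeta}) to $w=s+\tfrac z2$ and to $w=s-\tfrac z2$, and collapse the product of the two resulting cosines by $\cos A\cos B=\tfrac12[\cos(A-B)+\cos(A+B)]$ with $A=\tfrac{\pi}{2}(s+\tfrac z2)$ and $B=\tfrac{\pi}{2}(s-\tfrac z2)$, so that $A-B=\tfrac12\pi z$ and $A+B=\pi s$. Hence the integrand is precisely $F(s,z)\,\zeta(1-s-\tfrac z2)\,\zeta(1-s+\tfrac z2)\,y^{-s}$, i.e. the integrand defining $f(y,z)$; a contour shift from $\realpart{s}=1-c$ back to $\realpart{s}=c$ (legitimate because the only poles of the $\zeta$-factors lie at $\realpart{s}=\pm\realpart{\tfrac z2}$, outside the strip) yields $2\pi\int_{0}^{\infty}f(x,z)[\cdots]\,dx=f(y,z)$.

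The main obstacle is analytic rather than algebraic. One must choose $c$ in the (nonempty) overlap of the strip where $f$ is defined and the strip $\realpart{(1-s)}<\tfrac34$ where Lemma~\ref{lemma1} applies at $1-s$, and then justify the Fubini interchange and the final contour shift. This rests on decay estimates along vertical lines: Stirling's formula controls the $\Gamma$-factors, standard growth bounds control $\zeta$, and the hypothesis that $F$ is chosen so that the defining integral for $f$ converges is exactly what makes the interchanged double integral absolutely convergent. Once these technical points are in place, the trigonometric identity plus the functional equation do all the work, and the value of the integral is independent of the admissible $c$.
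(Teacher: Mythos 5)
Your proposal is correct and takes essentially the same route as the paper: the paper simply invokes Parseval's identity \eqref{parseval-1} (which packages exactly your Fubini interchange followed by the substitution $s\mapsto 1-s$), applies Lemma \ref{lemma1}, and then finishes with the same two applications of the asymmetric functional equation \eqref{zetafe} and the same product-to-sum cosine identity to recognize the integrand as that of $f(y,z)$. The only cosmetic difference is that unwinding Parseval by hand makes you apply Lemma \ref{lemma1} at $1-s$ rather than at $s$, shifting the admissible range of $c$ from $\realpart{s}<\tfrac34$ (paper) to $\realpart{s}>\tfrac14$ (yours); both choices are harmless since the integral is independent of $c$ within the strip.
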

\begin{proof}
Apply Parseval's identity \eqref{parseval-1} to the product of 
the functions $f(x,z)$ and $g(x,z) = 
2 \pi \left[ \cos \left( \tfrac{1}{2} \pi z \right) M_{z}( 4 \pi \sqrt{xy}) -
\sin \left( \tfrac{1}{2} \pi z \right) J_{z}( 4 \pi \sqrt{xy}) \right]$, 
with $z$ a parameter. Lemma \ref{lemma1} 
gives the identity 
\begin{multline*}
2 \pi \int_{0}^{\infty} f(x,z) \left[ \cos \left( \tfrac{1}{2} \pi z \right)
M_{z}(4 \pi \sqrt{xy}) - \sin \left( \tfrac{1}{2} \pi z \right) 
J_{z}(4 \pi \sqrt{xy}) \right] dx = \\
\frac{1}{2 \pi i} \int_{C} F(s,z) 
\zeta \left( s - \frac{z}{2} \right)
\zeta \left( s + \frac{z}{2} \right)
2^{1-2s} \pi^{-2s} y^{-s}  \\
\times \Gamma \left( s - \frac{z}{2} \right)
\Gamma \left( s + \frac{z}{2} \right)
\left[ \cos \left( \tfrac{1}{2} \pi z \right) + \cos(\pi s ) \right] ds,
\end{multline*}
\noindent
where $C$ is the line $\realpart{s} = c$ and $\pm$ Re $\frac{z}{2}<c< \text{min}\left\{\frac{3}{4}, 1\pm\text{Re}\, \frac{z}{2}\right\}$. Now use the 
functional equation for 
the Riemann zeta function \eqref{functional-zeta} in the form 
\begin{equation}
\zeta(s) = 2^{s} \pi^{s-1} \Gamma(1-s) \zeta(1-s) 
\sin \left( \tfrac{1}{2} \pi s \right)\label{zetafe}
\end{equation}
\noindent
to simplify the line integral and produce 
\begin{multline*}
2 \pi \int_{0}^{\infty} f(x,z) \left[ \cos \left( \tfrac{1}{2} \pi z \right)
M_{z}(4 \pi \sqrt{xy}) - \sin \left( \tfrac{1}{2} \pi z \right) 
J_{z}(4 \pi \sqrt{xy}) \right] dx = \\
\frac{1}{2 \pi i } \int_{C} F(s,z) 
\zeta \left(1-s- \frac{z}{2} \right)
\zeta \left(1-s+ \frac{z}{2} \right)y^{-s} ds.
\end{multline*}
\noindent
This last line is $f(y,z)$ and the result has been established.
\end{proof}

\begin{corollary}\label{cor5.4}
Let $f(x,z)$ be as in the previous theorem. Then, if $\alpha, \, \beta > 0$ 
and $\alpha \beta = 1$ and $-1 < \realpart{z} < 1$, 
\begin{equation}\label{cor5.4result}
\sqrt{\alpha} \int_{0}^{\infty} K_{z/2}(2 \pi \alpha x) f(x,z) dx = 
\sqrt{\beta} \int_{0}^{\infty} K_{z/2}(2 \pi \beta x) f(x,z) dx.
\end{equation}
\end{corollary}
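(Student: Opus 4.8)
The plan is to exploit the fact that both $f(x,z)$ and a rescaled Bessel kernel are self-reciprocal with respect to the \emph{same} kernel, so that the reflection $\alpha \mapsto 1/\alpha$ emerges by composing the two reciprocities and invoking $\alpha\beta = 1$. Concretely, I would start from the self-reciprocity of Theorem \ref{production-1}, i.e.\ equation \eqref{thm5.3fyz}, and substitute it into the left-hand side
\[
I(\alpha) := \sqrt{\alpha} \int_{0}^{\infty} K_{z/2}(2\pi\alpha x)\, f(x,z)\, dx .
\]
This replaces $f(x,z)$ by its single integral against the kernel $2\pi\bigl[\cos\left(\tfrac{1}{2}\pi z\right) M_{z}(4\pi\sqrt{xu}) - \sin\left(\tfrac{1}{2}\pi z\right) J_{z}(4\pi\sqrt{xu})\bigr]$, turning $I(\alpha)$ into an iterated integral in $x$ and $u$.

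Next I would interchange the order of integration and isolate the inner $x$-integral
\[
\int_{0}^{\infty} K_{z/2}(2\pi\alpha x)\Bigl(\cos\left(\tfrac{1}{2}\pi z\right) M_{z}(4\pi\sqrt{xu}) - \sin\left(\tfrac{1}{2}\pi z\right) J_{z}(4\pi\sqrt{xu})\Bigr)\, dx .
\]
This is exactly where Koshlyakov's reciprocity \eqref{koshlyakov-1} enters: taking $\nu = z/2$ there makes the Bessel order $2\nu$ equal to $z$, matching the kernel above, and the admissible range $-\tfrac{1}{2} < \realpart{\nu} < \tfrac{1}{2}$ becomes precisely $-1 < \realpart{z} < 1$. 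Under the substitution $t = 2\pi\alpha x$ one checks $4\pi\sqrt{xu} = 2\sqrt{t\,(2\pi u/\alpha)}$, so \eqref{koshlyakov-1} evaluates the inner integral in closed form to $\tfrac{1}{2\pi\alpha}\, K_{z/2}(2\pi u/\alpha)$.

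Finally, using $\alpha\beta = 1$ to rewrite $2\pi u/\alpha = 2\pi\beta u$ and $\sqrt{\alpha}/\alpha = 1/\sqrt{\alpha} = \sqrt{\beta}$, the prefactor $2\pi\alpha$ cancels against the $2\pi$ carried by the kernel, leaving
\[
I(\alpha) = \sqrt{\beta}\int_{0}^{\infty} K_{z/2}(2\pi\beta u)\, f(u,z)\, du = I(\beta),
\]
which is the assertion \eqref{cor5.4result}. I expect the main obstacle to be the analytic justification of the interchange of integration, which requires controlling the joint absolute integrability of $f(x,z)$ against the oscillatory kernel near $x = 0$ and $x = \infty$; the decay of $K_{z/2}$ at infinity together with the strip condition on $c = \realpart{s}$ in Theorem \ref{production-1} should furnish the needed bounds, but confirming that $-1 < \realpart{z} < 1$ simultaneously accommodates all the Mellin transforms and validates Fubini is the delicate point.
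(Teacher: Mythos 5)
Your proof is correct and is essentially the paper's own argument run in mirror image: the paper first expands $K_{z/2}(2\pi\alpha x)$ as an integral via Koshlyakov's reciprocity \eqref{koshlyakov-1} and then collapses the inner integral using the self-reciprocity \eqref{thm5.3fyz} of $f$, whereas you expand $f$ via \eqref{thm5.3fyz} and collapse the inner integral with \eqref{koshlyakov-1}. Since both routes rest on exactly the same two identities, the same single interchange of integration, and the same final use of $\alpha\beta = 1$, they constitute the same proof up to the order of the two steps.
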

\begin{proof}
The identity \eqref{koshlyakov-1} produces 
\begin{equation}\label{selfkoshcv1}
K_{\frac{z}{2}}(2\pi\alpha x)=\frac{2\pi}{\alpha}\int_{0}^{\infty}K_{\frac{z}{2}}\left(\frac{2\pi y}{\alpha}\right)\left(\cos\left(\frac{\pi z}{2}\right)M_{z}(4\pi\sqrt{yx})-\sin\left(\frac{\pi z}{2}\right)J_{2\nu}(4\pi\sqrt{yx})\right)\, dy.
\end{equation}
Thus,
\begin{equation*}
\int_{0}^{\infty}K_{\frac{z}{2}}(2\pi\alpha x)f(x,z)\, dx  =  
\end{equation*}
\begin{equation*}
\frac{2\pi}{\alpha}\int_{0}^{\infty}K_{\frac{z}{2}}
\left(\frac{2\pi y}{\alpha}\right)\int_{0}^{\infty}f(x,z) 
\left(\cos\left(\frac{\pi z}{2}\right)M_{z}(4\pi\sqrt{yx})-
\sin\left(\frac{\pi z}{2}\right)J_{2\nu}(4\pi\sqrt{yx})\right)\, dx\, dy,
\end{equation*}
where the interchange of the order of integration can be easily justified. Now 
apply \eqref{thm5.3fyz} and use the fact $\alpha\beta=1$, to 
deduce \eqref{cor5.4result}.
\end{proof}

The next statement admits a similar proof as the previous theorem.

\begin{theorem}
\label{production-2}
Assume $\pm \realpart{\frac{z}{2}} < c = \realpart{s} < 1 \pm 
\realpart{\frac{z}{2}}$. Define $f(x, z)$ by
\begin{equation}
f(x,z) = \frac{1}{2 \pi i} \int_{c - i \infty}^{c + i \infty} 
\frac{F(s,z)}{(2 \pi)^{2s}}  
\Gamma \left( s - \frac{z}{2} \right) 
\Gamma \left( s + \frac{z}{2} \right) 
\zeta\left(s -\frac{z}{2}\right) \zeta\left(s + \frac{z}{2}\right) x^{-s} \, ds,
\end{equation}
where $F(s,z)$ is a function satisfying $F(s,z) = F(1-s,z)$ and is such that the above integral converges.
\noindent
Then $f$ is self-reciprocal (as a function of $x$) with respect to the kernel 
\begin{equation}
2 \pi \left( \sin \left( \tfrac{1}{2} \pi z \right) J_{z}( 4 \pi \sqrt{xy}) -
\cos \left( \tfrac{1}{2} \pi z \right) L_{z}( 4 \pi \sqrt{xy}) \right),
\end{equation}
\noindent
that is,
\begin{equation}\label{thm5.5fyz}
f(y,z) = 2 \pi \int_{0}^{\infty} f(x,z) 
\left[ \sin \left( \tfrac{1}{2} \pi z \right) J_{z}( 4 \pi \sqrt{xy}) -
\cos \left( \tfrac{1}{2} \pi z \right) L_{z}( 4 \pi \sqrt{xy}) \right] dx.
\end{equation}
\end{theorem}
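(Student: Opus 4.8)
The plan is to mirror the proof of Theorem~\ref{production-1} essentially line by line, with Lemma~\ref{lemma2} playing the role that Lemma~\ref{lemma1} played there. Write $g(x,z)=2\pi\left(\sin\left(\tfrac{1}{2}\pi z\right)J_{z}(4\pi\sqrt{xy})-\cos\left(\tfrac{1}{2}\pi z\right)L_{z}(4\pi\sqrt{xy})\right)$ for the kernel, so that the right-hand side of \eqref{thm5.5fyz} is $\int_{0}^{\infty}f(x,z)g(x,z)\,dx$. First I would apply Parseval's identity \eqref{parseval-1}. The defining integral for $f$ identifies its Mellin transform as $\frac{F(s,z)}{(2\pi)^{2s}}\Gamma\!\left(s-\tfrac{z}{2}\right)\Gamma\!\left(s+\tfrac{z}{2}\right)\zeta\!\left(s-\tfrac{z}{2}\right)\zeta\!\left(s+\tfrac{z}{2}\right)$, while Lemma~\ref{lemma2} gives the Mellin transform of $g$ as $2^{1-2s}\pi^{-2s}y^{-s}\Gamma\!\left(s-\tfrac{z}{2}\right)\Gamma\!\left(s+\tfrac{z}{2}\right)\left(\cos\left(\tfrac{1}{2}\pi z\right)-\cos(\pi s)\right)$. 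After the substitution $s\mapsto 1-s$ in the first transform (using $F(1-s,z)=F(s,z)$) and multiplication, the right-hand side of \eqref{thm5.5fyz} becomes a single contour integral along $\realpart{s}=c$.

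The second step is to collapse that integrand back to the one defining $f(y,z)$. The substitution $s\mapsto 1-s$ pairs the gamma factors as $\Gamma\!\left(1-s-\tfrac{z}{2}\right)\Gamma\!\left(s+\tfrac{z}{2}\right)$ and $\Gamma\!\left(1-s+\tfrac{z}{2}\right)\Gamma\!\left(s-\tfrac{z}{2}\right)$, each of which the reflection formula $\Gamma(w)\Gamma(1-w)=\pi/\sin(\pi w)$ turns into $\pi/\sin(\pi(s+\tfrac{z}{2}))$ and $\pi/\sin(\pi(s-\tfrac{z}{2}))$ respectively; combined with the prefactor powers of $2$ and $\pi$, this leaves the constant $\tfrac{1}{2}$ together with the denominator $\sin(\pi(s-\tfrac{z}{2}))\sin(\pi(s+\tfrac{z}{2}))$. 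To reconcile the two zeta-products I would invoke the functional equation \eqref{zetafe} in the form $\zeta(w)=2^{w}\pi^{w-1}\Gamma(1-w)\zeta(1-w)\sin\left(\tfrac{1}{2}\pi w\right)$ at $w=s\pm\tfrac{z}{2}$, exactly as in Theorem~\ref{production-1}.

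The crux --- and essentially the only place where this proof differs from that of Theorem~\ref{production-1} --- is a single trigonometric identity. There the kernel contributed the factor $\cos\left(\tfrac{1}{2}\pi z\right)+\cos(\pi s)$, which cancelled against $2\cos\!\bigl(\tfrac{\pi}{2}(s-\tfrac{z}{2})\bigr)\cos\!\bigl(\tfrac{\pi}{2}(s+\tfrac{z}{2})\bigr)$. Here Lemma~\ref{lemma2} supplies the \emph{difference} $\cos\left(\tfrac{1}{2}\pi z\right)-\cos(\pi s)$, and the extra pair of gamma functions in the present definition of $f$ produces $\sin(\pi(s-\tfrac{z}{2}))\sin(\pi(s+\tfrac{z}{2}))$ after reflection; the identity that must hold is
\[
\sin\!\left(\pi\!\left(s-\tfrac{z}{2}\right)\right)\sin\!\left(\pi\!\left(s+\tfrac{z}{2}\right)\right)=\left(\cos\left(\tfrac{1}{2}\pi z\right)-\cos(\pi s)\right)\left(\cos\left(\tfrac{1}{2}\pi z\right)+\cos(\pi s)\right).
\]
Both sides equal $\cos^{2}\left(\tfrac{1}{2}\pi z\right)-\cos^{2}(\pi s)$ --- the left by the product-to-sum formula $\sin A\sin B=\tfrac{1}{2}(\cos(A-B)-\cos(A+B))$ followed by the double-angle identity --- so the verification is immediate once one sees which identity is needed. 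With this in hand the integrand collapses to $\frac{F(s,z)}{(2\pi)^{2s}}\Gamma\!\left(s-\tfrac{z}{2}\right)\Gamma\!\left(s+\tfrac{z}{2}\right)\zeta\!\left(s-\tfrac{z}{2}\right)\zeta\!\left(s+\tfrac{z}{2}\right)y^{-s}$, whose inverse Mellin transform is precisely $f(y,z)$, establishing \eqref{thm5.5fyz}.

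Finally, as in Corollary~\ref{cor5.4} and the proof of Theorem~\ref{production-1}, I would record that the hypotheses $\pm\realpart{\tfrac{z}{2}}<c<1\pm\realpart{\tfrac{z}{2}}$ (together with the bound $c<\tfrac{3}{4}$ required by Lemma~\ref{lemma2} and the assumed convergence of the defining integral for $f$) place the line $\realpart{s}=c$ in the common strip of analyticity, so that Parseval's identity applies and the interchange of integration order is legitimate. I do not expect any genuine difficulty here beyond routine bookkeeping on the strip; the entire content of the theorem sits in the trigonometric cancellation above.
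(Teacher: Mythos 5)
Your proposal is correct and is precisely the argument the paper intends: the paper's entire "proof" of Theorem \ref{production-2} is the remark that it "admits a similar proof as the previous theorem," and your line-by-line mirroring of the proof of Theorem \ref{production-1} — Parseval's identity \eqref{parseval-1}, Lemma \ref{lemma2} in place of Lemma \ref{lemma1}, reflection on the two gamma pairs, and the functional equation \eqref{zetafe} — is exactly that proof, with the correct common strip $\pm\realpart{\tfrac{z}{2}}<c<\min\{\tfrac{3}{4},1\pm\realpart{\tfrac{z}{2}}\}$ noted. The key cancellation you isolate, $\sin\left(\pi\left(s-\tfrac{z}{2}\right)\right)\sin\left(\pi\left(s+\tfrac{z}{2}\right)\right)=\left(\cos\left(\tfrac{1}{2}\pi z\right)-\cos(\pi s)\right)\left(\cos\left(\tfrac{1}{2}\pi z\right)+\cos(\pi s)\right)=\cos^{2}\left(\tfrac{1}{2}\pi z\right)-\cos^{2}(\pi s)$, is the right one and verifies.
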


\begin{corollary}\label{cor5.6}
Let $f(x,z)$ be as in the previous theorem. Then, if $\alpha, \, \beta > 0$ 
and $\alpha \beta = 1$ and $-1 < \realpart{z} < 1$, 
\begin{equation}
\sqrt{\alpha^{3}} \int_{0}^{\infty} x K_{z/2}(2 \pi \alpha x) f(x,z) dx = 
\sqrt{\beta^{3}} \int_{0}^{\infty} x K_{z/2}(2 \pi \beta x) f(x,z) dx.
\end{equation}
\end{corollary}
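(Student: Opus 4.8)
The plan is to mirror the proof of Corollary \ref{cor5.4} almost verbatim, replacing Koshlyakov's first identity \eqref{koshlyakov-1} by his second identity \eqref{koshlyakov-2}, and the self-reciprocity \eqref{thm5.3fyz} by its companion \eqref{thm5.5fyz} from Theorem \ref{production-2}. The extra factor of $t$ inside \eqref{koshlyakov-2} is exactly what will convert the weight into $x$ in the integrand and replace the power $\alpha$ by $\alpha^{3}$ in the resulting symmetry.

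First I would recast \eqref{koshlyakov-2} with $\nu = z/2$ (so that $2\nu = z$) and perform the substitutions $t = 2\pi y/\alpha$ and $x \mapsto 2\pi\alpha x$, chosen precisely so that the Bessel argument becomes $2\sqrt{xt} = 4\pi\sqrt{xy}$. Tracking the Jacobian $\tfrac{2\pi}{\alpha}$ together with the weight $t = 2\pi y/\alpha$ yields the analogue of \eqref{selfkoshcv1}, namely
\begin{equation*}
x K_{z/2}(2\pi\alpha x) = \frac{2\pi}{\alpha^{3}} \int_{0}^{\infty} y\, K_{z/2}\!\left(\frac{2\pi y}{\alpha}\right) \left( \sin\!\left(\tfrac{\pi z}{2}\right) J_{z}(4\pi\sqrt{xy}) - \cos\!\left(\tfrac{\pi z}{2}\right) L_{z}(4\pi\sqrt{xy}) \right) dy.
\end{equation*}

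Next I would multiply this identity by $f(x,z)$, integrate over $x \in (0,\infty)$, and interchange the order of integration. The inner $x$-integral is then precisely the self-reciprocity relation \eqref{thm5.5fyz} of Theorem \ref{production-2}, which collapses it to $\tfrac{1}{2\pi} f(y,z)$. After cancelling the constants $\tfrac{2\pi}{\alpha^{3}} \cdot \tfrac{1}{2\pi}$ this gives
\begin{equation*}
\int_{0}^{\infty} x\, K_{z/2}(2\pi\alpha x)\, f(x,z)\, dx = \frac{1}{\alpha^{3}} \int_{0}^{\infty} y\, K_{z/2}\!\left(\frac{2\pi y}{\alpha}\right) f(y,z)\, dy.
\end{equation*}
Writing $\beta = 1/\alpha$ turns the right-hand side into $\alpha^{-3} \int_{0}^{\infty} y\, K_{z/2}(2\pi\beta y)\, f(y,z)\, dy$; multiplying both sides by $\alpha^{3/2}$ and using $\alpha^{-3/2} = \beta^{3/2}$ produces exactly the claimed $\sqrt{\alpha^{3}}$--$\sqrt{\beta^{3}}$ symmetry.

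The one step requiring genuine care, exactly as in Corollary \ref{cor5.4}, is the appeal to Fubini's theorem to interchange the $x$- and $y$-integrations; this is where the hypothesis $-1 < \realpart{z} < 1$ must be combined with the exponential decay of $K_{z/2}$ against the slower growth and oscillation of $J_{z}$ and $L_{z}$ (the latter built from $Y_{z}$ and $K_{z}$) to verify absolute integrability of the double integral. Everything else is routine bookkeeping of constants identical to the earlier corollary.
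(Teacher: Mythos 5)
Your proposal is correct and is essentially the paper's own argument: the paper proves Corollary \ref{cor5.6} by simply declaring it analogous to Corollary \ref{cor5.4}, and your proof is precisely that analogue, with Koshlyakov's second identity \eqref{koshlyakov-2} (correctly yielding the factor $\tfrac{2\pi}{\alpha^{3}}$ after the substitutions) playing the role of \eqref{koshlyakov-1}, and the self-reciprocity \eqref{thm5.5fyz} of Theorem \ref{production-2} collapsing the inner integral exactly as \eqref{thm5.3fyz} does there. The constant bookkeeping and the final $\sqrt{\alpha^{3}}$--$\sqrt{\beta^{3}}$ step are all correct.
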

\begin{proof}
The proof of similar to that of Corollary \ref{cor5.4}, so details are 
omitted.
\end{proof}
\smallskip

The proofs of the theorems stated in
Section \ref{sec-param-intro} are given now. In the proofs below, the 
convergence of the integrals involving the gamma function
can be easily proved using Stirling's formula  for 
$\Gamma(s)$, $s=\sigma+it$, in a vertical 
strip $\alpha\leq\sigma\leq\beta$ given by
\begin{equation}\label{strivert}
|\Gamma(\sigma+it)|=(2\pi)^{\tfrac{1}{2}}|t|^{\sigma-\tfrac{1}{2}}e^{-\tfrac{1}{2}\pi |t|}\left(1+O\left(\frac{1}{|t|}\right)\right)
\end{equation}
as $|t|\to\infty$. The 
vanishing of the integrals which involve gamma function along 
the horizontal segments of a contour as the height $T\to\infty$ is 
also established using \eqref{strivert}.

The interchange of the order of summation and integration, or of the order of integration
in the case of double integrals, is permissible because of absolute convergence of the series and integrals involved.

\smallskip

\noindent
\textit{\textbf{Proof of Theorem \ref{thm-46}}}. Take 
$F(s,z) \equiv 1$ in Theorem \ref{production-2}. The conclusion requires 
the evaluation of 
\begin{multline}
\label{form-f}
f(x,z) = \frac{1}{2 \pi i } \int_{c - i \infty}^{c + i \infty} \left(4\pi^2 x\right)^{-s}\\
\times \Gamma \left( s - \frac{z}{2} \right)
\Gamma \left( s + \frac{z}{2} \right)
\zeta \left( s - \frac{z}{2} \right)
\zeta \left( s + \frac{z}{2} \right)  ds
\end{multline}
\noindent
for $ \pm \realpart{ \tfrac{z}{2} } < c = \realpart{s} < 1 \pm 
\realpart{\tfrac{z}{2}}$. It will be shown next that 
\begin{equation}
f(x,z) = \Phi(x,z), 
\end{equation}
\noindent
using the notation of Theorem \ref{thm-46}. 

\medskip

This computation begins by using the expansion 
\begin{equation}
\zeta \left( s - \frac{z}{2} \right) 
\zeta \left( s + \frac{z}{2} \right)  = \sum_{n=1}^{\infty} 
\frac{\sigma_{-z}(n)}{n^{s - z/2}}
\label{expansion-sigma}
\end{equation}
\noindent
valid for $\realpart{s} > 1  \pm \realpart{ \frac{z}{2}}$. See 
\cite[p. $8$, equation $1.3.1$]{titchmarsh-1986a}. Thus, in order to use 
\eqref{expansion-sigma} in \eqref{form-f}, it is required to shift the 
line of integration to the vertical line $\lambda = \realpart{s} > 
1 \pm \realpart{\frac{z}{2}}$. This shift captures
two poles of the integrand at $s = 1 +z/2$ and $s = 1 - z/2$. Since the 
contributions of the horizontal segments at $\imagpart{s} = \pm T$ vanish
as $T \to \infty$, the residue theorem gives 
\begin{multline*}
f(x,z)  =  \sum_{n=1}^{\infty} \sigma_{-z}(n) n^{z/2} 
\frac{1}{2 \pi i} \int_{\lambda - i \infty}^{\lambda + i \infty} 
\Gamma \left( s - \frac{z}{2} \right) 
\Gamma \left( s + \frac{z}{2} \right) 
(4 \pi^{2} n x)^{-s} ds \\
 -   \lim\limits_{s \to 1 + z/2} 
\left( s - \frac{z}{2} - 1 \right) 
\zeta \left( s - \frac{z}{2} \right)
\zeta \left( s + \frac{z}{2} \right)
\Gamma \left( s - \frac{z}{2} \right)
\Gamma \left( s + \frac{z}{2} \right)
(4 \pi^{2} x)^{-s} \\
\quad -  \lim\limits_{s \to 1 - z/2} 
\left( s + \frac{z}{2} - 1 \right) 
\zeta \left( s - \frac{z}{2} \right)
\zeta \left( s + \frac{z}{2} \right)
\Gamma \left( s - \frac{z}{2} \right)
\Gamma \left( s + \frac{z}{2} \right)
(4 \pi^{2} x)^{-s}.
\end{multline*}

The line integral above is evaluated using \eqref{mellin-K} that gives 
\begin{equation}
\frac{1}{2 \pi i} \int_{\lambda - i \infty}^{\lambda + i \infty} \Gamma \left( s - 
\frac{z}{2} \right) \Gamma\left( s + \frac{z}{2} \right) 
(4 \pi^{2} nx)^{-s} ds = 2 K_{z}(4 \pi \sqrt{nx}). 
\end{equation}
\noindent
Then the residues at the poles are computed using 
\begin{equation}
\lim\limits_{s \to 1} (s-1) \zeta(s) = 1,
\end{equation}
\noindent 
that gives 
\begin{multline*}
f(x,z) = 2 \sum_{n=1}^{\infty} \sigma_{-z}(n) n^{z/2} K_{z}(4 \pi \sqrt{nx})
\\
- \frac{\Gamma(1+z) \zeta(1+z) x^{-1-z/2}}{(2 \pi)^{2 + z}} 
- \frac{\Gamma(1-z) \zeta(1-z) x^{-1+z/2}}{(2 \pi)^{2 - z}}.
\end{multline*}
\noindent
This shows $f(x,z) = \Phi(x,z)$. Corollary \eqref{cor5.6} now 
establishes the first equality in \eqref{thm-46id}.

\smallskip

The equality between the extreme left and right sides of \eqref{thm-46id} is 
establihsed next. The invariance of the latter under $\alpha\to 1/\alpha$ then 
easily establishes the other equality, thus giving another proof of the 
transformation.

The proof begins by replacing $s$ by $s+1$, $a$ by $2 \pi \alpha$ and 
$z$ by $z/2$ in 
\eqref{mellin-K} to obtain
\begin{multline}
\label{mell-1}
\frac{1}{2 \pi i} \int_{c - i \infty}^{c + i \infty} 2^{s-1} 
(2 \pi \alpha)^{-s-1} 
\Gamma \left( \frac{s+1}{2} - \frac{z}{4} \right)
\Gamma \left( \frac{s+1}{2} + \frac{z}{4} \right)
x^{-s} ds  \\
= x K_{z/2}( 2 \pi \alpha x)
\end{multline}
for Re $s>-1\pm$ Re $\frac{z}{2}$.
\noindent
Then \eqref{parseval-1}, \eqref{form-f} and \eqref{mell-1} give
\begin{multline}
\sqrt{\alpha^{3}} \int_{0}^{\infty} x K_{z/2}( 2 \pi \alpha x) \Phi(x,z) dx = \\
\frac{\sqrt{\alpha^{3}}}{2 \pi i} 
\int_{c - i \infty}^{c + i \infty} 
\left[ \frac{ (\pi \alpha)^{s-2}}{4} 
\Gamma \left( 1 - \frac{s}{2} - \frac{z}{4} \right)
\Gamma \left( 1 - \frac{s}{2} + \frac{z}{4} \right)
\right] \\
\times \frac{1}{( 2 \pi)^{2s}} 
\Gamma \left( s - \frac{z}{2} \right)
\Gamma \left( s + \frac{z}{2} \right)
\zeta \left( s - \frac{z}{2} \right)
\zeta \left( s + \frac{z}{2} \right)
\, ds,
\end{multline}
\noindent
for $\pm \realpart{ \frac{z}{2}} < c = \realpart{s} < 1 \pm 
\realpart{ \frac{z}{2} }$. Now use the relation
\begin{equation}
\Gamma(s) \Gamma \left( s + \tfrac{1}{2} \right) = 
\frac{\sqrt{\pi}}{2^{2s-1}} \Gamma(2s)
\end{equation}
\noindent
to express each of $\Gamma(s \pm z/2)$ as a product of two gamma factors and 
obtain
\begin{multline}
\sqrt{\alpha^{3}} \int_{0}^{\infty} x K_{z/2}( 2 \pi \alpha x ) \Phi(x,z) dx = 
\\
\frac{1}{32 \pi^{4} i \sqrt{\alpha}} 
\int_{c - i \infty}^{c + i \infty} 
\Gamma \left( 1 - \frac{s}{2} - \frac{z}{4} \right) 
\Gamma \left( 1 - \frac{s}{2} + \frac{z}{4} \right) 
\Gamma \left( \frac{s}{2} - \frac{z}{4} \right) 
\Gamma \left( \frac{s}{2} + \frac{z}{4} \right)  \\
\times \Gamma \left( \frac{s}{2} - \frac{z}{4} + \frac{1}{2} \right) 
\Gamma \left( \frac{s}{2} + \frac{z}{4} + \frac{1}{2} \right) 
\zeta \left( s - \frac{z}{2} \right) 
\zeta \left( s + \frac{z}{2} \right) 
\left( \frac{\pi}{\alpha} \right)^{-s} ds.
\end{multline}
The integrand is now expressed 
in terms of the Riemann $\xi$-function \eqref{def-xi}
with $c = \tfrac{1}{2}$ and $-1 < \realpart{z} < 1$. This yields 
\begin{multline}\label{final4.6}
\sqrt{\alpha^{3}} \int_{0}^{\infty} x K_{z/2}( 2 \pi \alpha x ) \Phi(x,z) dx = 
\\
\frac{1}{128 \pi^{4} i \sqrt{\alpha}} 
\int_{\tfrac{1}{2} - i \infty}^{\tfrac{1}{2} + i \infty} 
\Gamma \left( - \frac{s}{2} - \frac{z}{4} \right) 
\Gamma \left( - \frac{s}{2} + \frac{z}{4} \right) 
\Gamma \left( \frac{s}{2} - \frac{z}{4} - \frac{1}{2} \right)  \\
\times \Gamma \left( \frac{s}{2} + \frac{z}{4} - \frac{1}{2} \right)  
\xi \left( s - \frac{z}{2} \right)
\xi \left( s + \frac{z}{2} \right)\alpha^{s} ds.
\end{multline}

The last step is to use the identity \footnote{There is a typo in the identity in \cite{dixit-2011b}.
One of the $\Xi$-functions should have $(t-iz)/2$ as its argument.}
\begin{multline}
\label{formula-210}
\int_{0}^{\infty} f \left(z, \frac{t}{2} \right)
\Xi \left( \frac{t + i z}{2} \right)
\Xi \left( \frac{t - i z}{2} \right)
\cos \left( \tfrac{1}{2} t \log \alpha \right) dt = \\
\frac{1}{i \sqrt{\alpha}} 
\int_{\tfrac{1}{2} - i \infty}^{\tfrac{1}{2} + i \infty} 
\phi \left( z, s - \tfrac{1}{2} \right)
\phi \left( z, \tfrac{1}{2} -s \right)
\xi \left( s -  \frac{z}{2} \right)
\xi \left( s +  \frac{z}{2} \right)
\alpha^{s} ds,
\end{multline}
\noindent
established in \cite{dixit-2011b}, with 
$f(z,t) = \phi(z,it) \phi(z,-it)$, to rewrite the integral on the right side of \eqref{final4.6}. Taking 
\begin{equation}
\phi(z,s) = \frac{1}{8 \sqrt{2} \, \pi^{2}} 
\Gamma \left( - \frac{s}{2} + \frac{z}{4} - \frac{1}{4} \right)
\Gamma \left( - \frac{s}{2} - \frac{z}{4} - \frac{1}{4} \right)
\end{equation}
\noindent
produces 
\begin{multline}
\sqrt{\alpha^{3}} \int_{0}^{\infty} x K_{z/2}( 2 \pi \alpha x) \Phi(x,z) dx
= \\
\frac{1}{128 \pi^{4}} 
\int_{0}^{\infty} 
\Gamma \left( \frac{z-1 + it}{4} \right) 
\Gamma \left( \frac{z-1 - it}{4} \right) 
\Gamma \left( \frac{-z-1 + it}{4} \right) \\
\times 
\Gamma \left( \frac{-z-1 - it}{4} \right) 
\Xi \left( \frac{t+iz}{2} \right)
\Xi \left( \frac{t-iz}{2} \right)
\cos \left( \tfrac{1}{2} t \log \alpha \right) dt.
\end{multline}
\noindent
Finally, the integrand on the right side can be simplified, using 
$\Gamma(u+1) = u \Gamma(u)$, to the form given 
in \eqref{thm-46id}. This completes the proof.

\medskip

\textit{\textbf{Proof of Theorem \ref{jbesstra}}}. In view of Corollary \ref{cor-47}, it suffices to show that
\begin{align}\label{ramdit}
&\\
&\sqrt{\alpha^{3}} 
\int_{0}^{\infty} x K_{0}(2 \pi \alpha x) 
\left( 2 \sum_{n=1}^{\infty} d(n) K_{0}(4 \pi \sqrt{nx}) + 
\frac{\log( 4 \pi^{2} x)}{4 \pi^{2}x} \right) dx\nonumber\\
&=\frac{\sqrt{\alpha}}{16\pi^2}\int_{0}^{\infty}\int_{0}^{\infty}\frac{y}{(y^2+t^2)^{3/2}}\left(J_{0}(2\alpha y)+\frac{4\pi t}{e^{2\pi t}-1}\left(\frac{1}{e^{2\pi\alpha y}-1}-\frac{1}{2\pi\alpha y}\right)\right)\, dy\, dt.\nonumber
\end{align}

The proof begins with an auxiliary result.

\begin{lemma}\label{lemdit}
For $\alpha, t>0$, we have
\begin{align}
\label{lambertkbess}
\int_{0}^{\infty}\frac{xK_{0}(2\pi\alpha x)}{e^{2\pi x/t}-1}\, dx=\frac{t}{8\pi\alpha}-\frac{t^2}{2}\int_{0}^{\infty}xJ_{0}(2\pi t\alpha x)\left(\psi(x+1)-\log x\right)\, dx.
\end{align}
\end{lemma}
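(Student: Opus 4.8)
The plan is to pass from the right-hand side of \eqref{lambertkbess} to the left-hand side by inserting a Binet-type integral representation for $\psi(x+1)-\log x$ and then collapsing the resulting double integral with two classical Bessel-integral evaluations. Differentiating Binet's second formula for $\log\Gamma$ and using $\psi(x+1)=\psi(x)+1/x$ gives, for $x>0$,
\[
\psi(x+1)-\log x=\frac{1}{2x}-2\int_{0}^{\infty}\frac{u\,du}{(u^{2}+x^{2})(e^{2\pi u}-1)}.
\]
I would substitute this into the inner integral on the right side of \eqref{lambertkbess} and treat the two resulting pieces separately.

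The piece coming from $1/(2x)$ contributes $-\tfrac{t^{2}}{4}\int_{0}^{\infty}J_{0}(2\pi t\alpha x)\,dx=-\tfrac{t^{2}}{4}\cdot\tfrac{1}{2\pi t\alpha}=-\tfrac{t}{8\pi\alpha}$, using the classical value $\int_{0}^{\infty}J_{0}(ax)\,dx=1/a$; this cancels the free term $t/(8\pi\alpha)$ exactly. What remains is the double integral
\[
t^{2}\int_{0}^{\infty}\int_{0}^{\infty}\frac{x\,u\,J_{0}(2\pi t\alpha x)}{(u^{2}+x^{2})(e^{2\pi u}-1)}\,du\,dx.
\]
Interchanging the order of integration and applying the standard evaluation $\int_{0}^{\infty}\frac{x\,J_{0}(bx)}{x^{2}+u^{2}}\,dx=K_{0}(bu)$ with $b=2\pi t\alpha$ reduces this to $t^{2}\int_{0}^{\infty}\frac{u\,K_{0}(2\pi t\alpha u)}{e^{2\pi u}-1}\,du$. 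The substitution $u=x/t$ then rescales this precisely into $\int_{0}^{\infty}\frac{x\,K_{0}(2\pi\alpha x)}{e^{2\pi x/t}-1}\,dx$, which is the left-hand side, completing the argument.

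The main obstacle is the handling of convergence at infinity: since $J_{0}(ax)\sim (2/\pi a x)^{1/2}\cos(ax-\pi/4)$, the integral $\int_{0}^{\infty}J_{0}(2\pi t\alpha x)\,dx$ and the full integral $\int_{0}^{\infty}xJ_{0}(2\pi t\alpha x)(\psi(x+1)-\log x)\,dx$ converge only conditionally, so the splitting into the $1/(2x)$ term and the Binet term, together with the interchange of integrations, must be justified (for instance by truncating the $x$-integration at a finite radius $R$ and passing to the limit, the double integral being absolutely convergent once the $1/(2x)$ contribution is removed). Apart from this point, every step is a routine application of the classical Bessel integrals quoted above and the blanket absolute-convergence justification already recorded in Section \ref{sec-proofs}.
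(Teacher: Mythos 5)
Your proof is correct, but it follows a genuinely different route from the paper's. The paper proves \eqref{lambertkbess} entirely by Mellin--Barnes methods: it represents $xK_{0}(2\pi\alpha x)$ and $(e^{2\pi x/t}-1)^{-1}$ as inverse Mellin transforms (\eqref{bessinv} and \eqref{gammazeta}), applies Parseval's identity \eqref{parseval-1} to get a contour integral, simplifies it via the functional equation \eqref{zetafe} and the reflection formula to the kernel $\frac{\Gamma((1+s)/2)}{\Gamma((1-s)/2)}\frac{\zeta(s)}{\sin\pi s}$, shifts the line of integration across $s=0$ (the residue there produces the term $\tfrac{t}{8\pi\alpha}$), and finally reconverts to a real integral using the $J_{0}$ Mellin transform \eqref{jbess}, Kloosterman's formula \eqref{kloos} for $\zeta(s)/\sin\pi s$, and Parseval's identity \eqref{parseval-2}. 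You instead work backward from the right-hand side by purely real-variable means: differentiating Binet's second formula to write $\psi(x+1)-\log x=\tfrac{1}{2x}-2\int_{0}^{\infty}\frac{u\,du}{(u^{2}+x^{2})(e^{2\pi u}-1)}$, evaluating the $\tfrac{1}{2x}$ piece with $\int_{0}^{\infty}J_{0}(ax)\,dx=1/a$ (which exactly cancels $\tfrac{t}{8\pi\alpha}$), and collapsing the remaining absolutely convergent double integral with $\int_{0}^{\infty}\frac{xJ_{0}(bx)}{x^{2}+u^{2}}\,dx=K_{0}(bu)$ followed by the rescaling $u=x/t$. In your argument the free term $\tfrac{t}{8\pi\alpha}$ comes from the $\tfrac{1}{2x}$ part of Binet's formula rather than from a residue at $s=0$; indeed Kloosterman's formula is essentially the Mellin-transform avatar of the Binet representation you use, so the two proofs encode the same analytic fact in different languages. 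What your route buys is elementarity and transparency (no contour shifting, no functional equation, no vanishing estimates on horizontal segments); what the paper's route buys is uniformity with the rest of its machinery, which extends directly to the parametric ($z$-dependent) generalizations, and the convenience that all its contour integrals converge absolutely. Your handling of the conditional-convergence issue --- truncating at a finite radius so that the splitting into the $\tfrac{1}{2x}$ term and the Binet term is legitimate, and invoking Fubini only on the absolutely convergent double integral --- is exactly the right fix and closes the only delicate point in the argument.
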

\begin{proof}
Let $z=0$ in \eqref{mell-1} to see that for $\lambda=$ Re $s>-1$,
\begin{equation}\label{bessinv}
xK_{0}(2\pi\alpha x)=\frac{1}{2 \pi i} \int_{\lambda - i \infty}^{\lambda + i \infty} 
2^{s-1} (2\pi\alpha)^{-s-1} \Gamma^{2}\left( \frac{s+1}{2}\right) x^{-s} \, ds. 
\end{equation}
It is well-known that for Re $s>1$,
\begin{equation}
\int_{0}^{\infty}\frac{y^{s-1}}{e^{y}-1}\, dy=\Gamma(s)\zeta(s).
\end{equation}
Let $y=2\pi x/t$ with $t>0$ to obtain 
\begin{equation}\label{gammazeta}
\frac{1}{2 \pi i} \int_{\lambda' - i \infty}^{\lambda' + i \infty} \Gamma(s)\zeta(s)\left(\frac{2\pi x}{t}\right)^{-s}\, ds=\frac{1}{e^{2\pi x/t}-1},
\end{equation}
\noindent
for $\lambda' = \realpart{s} > 1$.  Using 
(\ref{bessinv}), (\ref{gammazeta}) and Parseval's identity 
\eqref{parseval-1}, for $-1<c=$ Re $s<0$, give 
\begin{align*}
&\int_{0}^{\infty}\frac{xK_{0}(2\pi\alpha x)}{e^{2\pi x/t}-1}\, dx\nonumber\\
&=\frac{1}{2 \pi i} \int_{c - i \infty}^{c + i \infty} \left(\frac{t}{2\pi}\right)^{1-s}\Gamma(1-s)\zeta(1-s)2^{s-1} (2\pi\alpha)^{-s-1} \Gamma^{2}\left( \frac{s+1}{2}\right)\, ds.\nonumber
\end{align*}
Now use the functional equation \eqref{zetafe}
and a variant of the reflection formula for the gamma function, namely,
\begin{equation}
\Gamma\left(\frac{1}{2}+w\right)\Gamma\left(\frac{1}{2}-w\right)=\frac{\pi}{\cos\pi w}, \hspace{3mm}w-\tfrac{1}{2}\notin\mathbb{Z},
\end{equation}
in the above equation and simplify to deduce that
\begin{equation}\label{lambertbess1}
\int_{0}^{\infty}\frac{xK_{0}(2\pi\alpha x)}{e^{2\pi x/t}-1}\, dx=\frac{t}{8\pi i\alpha}\int_{c - i \infty}^{c + i \infty}\frac{\Gamma\left(\frac{1+s}{2}\right)}{\Gamma\left(\frac{1-s}{2}\right)}\frac{\zeta(s)}{\sin\pi s}(t\pi\alpha)^{-s}\, ds.
\end{equation}
Next, shift the line of integration from Re $s=c$, $-1<c<0$, to 
Re $s=c'$, $0<c'<1/2$ and apply the residue theorem by considering a 
rectangular contour. In doing so, one needs  to consider the contribution 
from the pole of order $1$ at $s=0$. Noting that the integrals along the horizontal segments of the contour tend to zero as the height tends to $\infty$, 
gives
\begin{align}\label{lambertbess2} \\
&\int_{0}^{\infty}\frac{xK_{0}(2\pi\alpha x)}{e^{2\pi x/t}-1}\, dx\nonumber\\
&=\frac{t}{4\alpha}\left\{\frac{1}{2 \pi i}\int_{c' - i \infty}^{c' + i \infty}\frac{\Gamma\left(\frac{1+s}{2}\right)}{\Gamma\left(\frac{1-s}{2}\right)}\frac{\zeta(s)}{\sin\pi s}(t\pi\alpha)^{-s}\, ds-\lim_{s\to 0}s\frac{\Gamma\left(\frac{1+s}{2}\right)}{\Gamma\left(\frac{1-s}{2}\right)}\frac{\zeta(s)}{\sin\pi s}(t\pi\alpha)^{-s}\right\}\nonumber\\
&=\frac{t}{4\alpha}\left(\frac{1}{2 \pi i}\int_{c' - i \infty}^{c' + i \infty}\frac{\Gamma\left(\frac{1+s}{2}\right)}{\Gamma\left(\frac{1-s}{2}\right)}\frac{\zeta(s)}{\sin\pi s}(t\pi\alpha)^{-s}\, ds+\frac{1}{2\pi}\right).
\nonumber
\end{align}
To evaluate the above integral, first use \eqref{Mellin-besselJ} with $z=0, a=2$ and $s$ replaced by $s+1$ so that for $-1<d=$ Re $s<\tfrac{1}{2}$,
\begin{equation}\label{jbess}
\frac{1}{2\pi i}\int_{d-i\infty}^{d+i\infty}\frac{\Gamma\left(\frac{1+s}{2}\right)}{\Gamma\left(\frac{1-s}{2}\right)}x^{-s}\, ds=2xJ_{0}(2x).
\end{equation}
The next steps employs a formula of Kloosterman 
\cite[p. 24-25, equations (2.9.1), (2.9.2)]{titchmarsh-1986a}, for 
$0<d'=$ Re $s<1$,
\begin{equation}
\frac{\zeta(s)}{\sin\pi s}=-\frac{1}{\pi}\int_{0}^{\infty}\left(\psi(x+1)-\log x\right)x^{-s}\, dx.
\end{equation}
Replacing $x$ by $1/x$ in the above formula, produces 
\begin{equation}\label{kloos}
\frac{1}{2\pi i}\int_{d'-i\infty}^{d'+i\infty}\frac{\zeta(s)}{\sin\pi s}x^{-s}\, ds=-\frac{1}{\pi x}\left(\psi\left(\frac{1}{x}+1\right)+\log x\right),
\end{equation}
for $0<d'=$ Re $s<1$. Since 
(\ref{jbess}) and (\ref{kloos}) are both valid in the region $0<$ Re $s<\tfrac{1}{2}$, using \eqref{parseval-2}, it follows that
\begin{align}
&\frac{1}{2 \pi i}\int_{c' - i \infty}^{c' + i \infty}\frac{\Gamma\left(\frac{1+s}{2}\right)}{\Gamma\left(\frac{1-s}{2}\right)}\frac{\zeta(s)}{\sin\pi s}(t\pi\alpha)^{-s}\, ds\nonumber\\
&=\frac{-1}{\pi^2t\alpha}\int_{0}^{\infty}2xJ_{0}(2x)\left(\psi\left(\frac{x}{\pi t\alpha}+1\right)-\log\frac{x}{\pi t\alpha}\right)\, dx.
\end{align}
Now let $x\to\pi t\alpha x$ and substitute in (\ref{lambertbess2}) to 
obtain (\ref{lambertkbess}).
\end{proof}

A proof of \eqref{ramdit} is presented next. 

\begin{proof}
Let
\begin{equation}\label{halpha}
H(\alpha):=\sqrt{\alpha^{3}} 
\int_{0}^{\infty} x K_{0}(2 \pi \alpha x) 
\left( 2 \sum_{n=1}^{\infty} d(n) K_{0}(4 \pi \sqrt{nx}) + 
\frac{\log( 4 \pi^{2} x)}{4 \pi^{2}x} \right) dx.
\end{equation}
Page 254 in the Lost Notebook \cite{ramanujan-1988a} (see also equation 
(4.1) in \cite{berndt-2008a}) gives 
\begin{equation}
\int_{0}^{\infty}\frac{dt}{t(e^{2\pi t}-1)(e^{2\pi x/t}-1)}=2\sum_{n=1}^{\infty}d(n)K_{0}\left(4\pi\sqrt{nx}\right).
\end{equation}
Hence 
\begin{align}\label{integ1}
H(\alpha)&=\sqrt{\alpha^{3}}\bigg\{\frac{ \log( 4 \pi^{2})}{4\pi^2}\int_{0}^{\infty}K_{0}(2\pi\alpha x)\, dx+\frac{1}{4\pi^2}\int_{0}^{\infty}K_{0}(2\pi\alpha x)\log x\, dx\nonumber\\
&\quad\quad\quad\quad+\int_{0}^{\infty}xK_{0}(2\pi\alpha x)\, dx\int_{0}^{\infty}\frac{dt}{t(e^{2\pi t}-1)(e^{2\pi x/t}-1)}\bigg\}.
\end{align}
Now formula $6.511.12$ in \cite{gradshteyn-2007a} gives 
\begin{equation}
\int_{0}^{\infty}K_{0}(2\pi\alpha x)\, dx=\frac{1}{4\alpha},\label{int-K0}
\end{equation}
and formula $2.16.20.1$ on page $365$ of \cite{prudnikov-1986a} states 
that for 
$| \realpart{w} | > \realpart{\nu}$ and real $m>0$,
\begin{multline}\label{mkl}
\int_{0}^{\infty} x^{w-1} K_{\nu}(mx) \log x \, dx = 
\frac{2^{w-3}}{m^{w}} 
\Gamma \left( \frac{w + \nu}{2} \right)
\Gamma \left( \frac{w - \nu}{2} \right) \\
\quad \left\{ \psi \left( \frac{w + \nu}{2} \right) +
\psi \left( \frac{w - \nu}{2} \right)
-2 \log \left( \frac{m}{2} \right) \right\}. 
\end{multline}
\noindent
Then \eqref{int-K0} and the above formula with $w=1$, $\nu=0$ 
and $m=2\pi\alpha$ converts  (\ref{integ1}) to 
\begin{align}\label{integ2}
H(\alpha)&=\sqrt{\alpha^{3}}\bigg\{\frac{ \log( 4 \pi^{2})}{16\pi^2\alpha}-\frac{1}{4\pi^2}\left(\frac{\gamma+\log(4\pi\alpha)}{4\alpha}\right)
+\int_{0}^{\infty}\frac{dt}{t(e^{2\pi t}-1)}\int_{0}^{\infty}\frac{xK_{0}(2\pi\alpha x)}{e^{2\pi x/t}-1}\, dx\bigg\}\nonumber\\
&=\frac{\sqrt{\alpha}}{16\pi^2}\left\{-\left(\gamma-\log\left(\frac{\pi}{\alpha}\right)\right)+16\pi^2\alpha\int_{0}^{\infty}\frac{dt}{t(e^{2\pi t}-1)}\int_{0}^{\infty}\frac{xK_{0}(2\pi\alpha x)}{e^{2\pi x/t}-1}\, dx\right\}.
\end{align} 
Lemma \ref{lemdit} along with the 
integral representation \cite[equation 3.5]{berndt-2010a}
\begin{equation}\label{berndtdixit}
\gamma-\log\left(\frac{\pi}{\alpha}\right)=\int_{0}^{\infty}\left(\frac{2\pi}{e^{2\pi t}-1}-\frac{e^{-2\alpha t}}{t}\right)\, dt
\end{equation}
is used in (\ref{integ2}) to obtain
\begin{align}\label{integ3}
H(\alpha)&=\frac{\sqrt{\alpha}}{16\pi^2}\bigg\{\int_{0}^{\infty}\left(\frac{e^{-2\alpha t}}{t}-\frac{2\pi}{e^{2\pi t}-1}\right)\, dt\nonumber\\
&\quad\quad\quad\quad+\int_{0}^{\infty}\frac{dt}{t(e^{2\pi t}-1)}\left(2\pi t-8\pi^2\alpha t^2\int_{0}^{\infty}xJ_{0}(2\pi t\alpha x)\left(\psi(x+1)-\log x\right)\, dx\right)\bigg\}\nonumber\\
&=\frac{\sqrt{\alpha}}{16\pi^2}\int_{0}^{\infty}\left(\frac{e^{-2\alpha t}}{t}-\frac{8\pi^2\alpha t}{e^{2\pi t}-1}\int_{0}^{\infty}xJ_{0}(2\pi t\alpha x)\left(\psi(x+1)-\log x\right)\, dx\right)\, dt.
\end{align}
The standard formulas
\begin{equation}\label{psilog}
2\pi\int_{0}^{\infty}\left(\frac{1}{e^{2\pi y}-1}-\frac{1}{2\pi y}\right)e^{-2\pi xy}\, dy=\log x-\psi(x+1),
\end{equation}
which can be obtained from 
\cite[p.~360, 3.427.7]{gradshteyn-2007a} and 
\cite[p.~702, 6.623.2]{gradshteyn-2007a}
\begin{equation}\label{jbesslap}
\int_{0}^{\infty}e^{-ax}J_{\nu}(bx)x^{\nu+1}\, dx=\frac{2a(2b)^{\nu}\Gamma\left(\nu+\tfrac{3}{2}\right)}{\sqrt{\pi}(a^2+b^2)^{\nu+\tfrac{3}{2}}},
\end{equation}
for Re $\nu>-1$ and Re $a>|$ Im $b|$. Substitute 
(\ref{psilog}) on the extreme right of (\ref{integ3}), and then use (\ref{jbesslap}) with $\nu=0, a=2\pi y$ and $b=2\pi t\alpha$ in the resulting equation 
to see (after simplification) that
\begin{align}\label{integ4}
H(\alpha)&=\frac{\sqrt{\alpha}}{16\pi^2}\int_{0}^{\infty}\left(\frac{e^{-2\alpha t}}{t}+\frac{4\pi\alpha t}{e^{2\pi t}-1}\int_{0}^{\infty}\frac{y}{(y^2+(\alpha t)^2)^{3/2}}\left(\frac{1}{e^{2\pi y}-1}-\frac{1}{2\pi y}\right)\, dy\right)\, dt\nonumber\\
&=\frac{\sqrt{\alpha}}{16\pi^2}\int_{0}^{\infty}\left(\frac{e^{-2\alpha t}}{t}+\frac{4\pi t}{e^{2\pi t}-1}\int_{0}^{\infty}\frac{y}{(y^2+t^2)^{3/2}}\left(\frac{1}{e^{2\pi\alpha y}-1}-\frac{1}{2\pi\alpha y}\right)\, dy\right)\, dt
\end{align}
Finally, use \cite[p.~675, 6.554.4]{gradshteyn-2007a} 
\begin{equation}
\int_{0}^{\infty}\frac{yJ_{0}(2\alpha y)}{(y^2+t^2)^{3/2}}\, dy=\frac{e^{-2\alpha t}}{t}, \quad \text{ for } \alpha, \, t > 0,
\end{equation}
on the extreme right of (\ref{integ4}) to prove  \eqref{ramdit}.
\end{proof}

\textit{\textbf{Proof of Theorem \ref{thm-45}}}. The first proof uses 
Theorem \ref{production-1} with 
$F(s,z) = \Gamma \left( s + z/2 \right) \Gamma(1 - s + z/2)$. This requires 
the evaluation of 
\begin{multline}
\label{formula-313}
f(x,z) = \frac{1}{2 \pi i } \int_{c - i \infty}^{c + i \infty} 
\Gamma \left( s + \frac{z}{2} \right)
\Gamma \left( 1- s + \frac{z}{2} \right) \\
\times \zeta \left( 1- s - \frac{z}{2} \right)
\zeta \left( 1- s + \frac{z}{2} \right)
x^{-s} ds
\end{multline}
\noindent
for $\pm \realpart{\frac{z}{2}} < c = \realpart{s} < 
1 \pm \realpart{\frac{z}{2}}$. 

First assume $c=$ Re $s>1\pm$ Re $\frac{z}{2}$. Denote the right-hand side of \eqref{formula-313} by $I(x,z)$. The change of 
variables $s= 1 - w$, gives
\begin{equation}\label{ixw}
I(x,w) = \frac{1}{2 \pi i} \int_{\lambda - i \infty}^{\lambda + i \infty}
H(x,w,z) dw,
\end{equation}
\noindent
where $\lambda = \realpart{w} < \pm \realpart{\frac{z}{2}}$ and 
\begin{equation}
H(x,w, z) = 
\Gamma \left( w + \frac{z}{2} \right) 
\Gamma \left( 1 - w + \frac{z}{2} \right) 
\zeta \left( w - \frac{z}{2} \right) 
\zeta \left( w + \frac{z}{2} \right) 
x^{w -1}.
\end{equation}
The line of integration is now shifted from $\realpart{w} = \lambda 
< \pm \realpart{\frac{z}{2}}$ to $\realpart{w} = \lambda' > 
1 \pm \realpart{\frac{z}{2}}$. These two lines are closed to form the 
rectangular contour with sides $( \lambda - iT, \lambda' - iT), \, 
( \lambda' - iT, \lambda' + iT), \, 
( \lambda' + iT, \lambda + iT), \, 
( \lambda + iT, \lambda - iT)$, where $T > 0$. This shift 
encounters poles at $w = -z/2, \, 1 + z/2$ and 
$1 - z/2$ of orders $1, \, 2$ and $1$, respectively. The residue theorem
gives
\begin{multline}
\int_{\lambda - iT}^{\lambda + iT} H(x,w,z) dw = 
\left[ 
\int_{\lambda - iT}^{\lambda'-  iT} +
\int_{\lambda' - iT}^{\lambda'+  iT} +
\int_{\lambda' + iT}^{\lambda +  iT} 
\right] H(x,w,z) dw  \\
- 2 \pi i \left[ R(-z/2) + R(1+z/2) + R(1-z/2) \right]
\end{multline}
\noindent
where $R(a)$ denotes the residue at the pole $a$. It is easy to see that 
the integrals along the horizontal segments tend to $0$ as $T \to \infty$. 
Therefore 
\begin{multline}
\int_{\lambda - i \infty}^{\lambda + i \infty} H(x,w,z) dw = 
\int_{\lambda' - i \infty}^{\lambda'+  i \infty} 
H(x,w,z) dw  \\
- 2 \pi i \left[ R(-z/2) + R(1+z/2) + R(1-z/2) \right].
\label{int-a1}
\end{multline}
The line integral is evaluated using \eqref{expansion-sigma} to obtain 
{\allowdisplaybreaks\begin{multline}
\label{formula-319}
\int_{\lambda' - i \infty}^{\lambda' + i \infty} H(x,w,z) dw =  \\
\frac{1}{x} \sum_{n=1}^{\infty} \sigma_{-z}(n) n^{z/2} 
\int_{\lambda' - i \infty}^{\lambda' + i \infty} 
\Gamma \left( w + \frac{z}{2} \right)
\Gamma \left( 1- w + \frac{z}{2} \right)
\left( \frac{n}{x} \right)^{-w} dw,
\end{multline}}
\noindent
for $\realpart{ (w \pm z/2)} > 1$. Now, for 
$ 0 < w_{0} = \realpart{w} < \realpart{z}$, 
\begin{equation}
\frac{1}{2 \pi i} \int_{w_{0} - i \infty}^{w_{0} + i \infty} 
\frac{\Gamma(w) \Gamma(z-w)}{\Gamma(z)} x^{-w} dw = 
\frac{1}{(1+x)^{z}},
\end{equation}
which, upon replacement of $w$ by $w+\frac{z}{2}$ and $z$ by $1+z$, gives
\noindent
\begin{equation}\label{melfs}
\frac{1}{2\pi i}\int_{d-i\infty}^{d+i\infty}\Gamma\left(w+\frac{z}{2}\right)\Gamma\left(1-w+\frac{z}{2}\right)x^{-w}\, dw=\frac{x^{\frac{z}{2}}\Gamma(1+z)}{(1+x)^{1+z}}
\end{equation}
for $- \realpart{\frac{z}{2}} < d = \realpart{w} < 1 + \realpart{\frac{z}{2}}$. Another application of the residue theorem leads to
\begin{multline}
\label{formula-322}
\frac{1}{2 \pi i} \int_{d - i \infty}^{d + i \infty} 
\Gamma \left(w + \frac{z}{2} \right) 
\Gamma \left( 1 - w + \frac{z}{2} \right)
x^{-w} dw = \\
\Gamma(1+z) \left[ \frac{x^{z/2}}{(1+x)^{1+z}} - x^{-1-z/2} \right]
\end{multline}
\noindent
for $d=\realpart{w}>1+\realpart{\frac{z}{2}}$.  Then \eqref{formula-319} and \eqref{formula-322} give
\begin{equation}
\frac{1}{2 \pi i } 
\int_{\lambda' - i \infty}^{\lambda' + i \infty} H(x,w,z) dw = 
x^{z/2} \Gamma(1+z) \sum_{n=1}^{\infty} \sigma_{-z}(n) 
\left( \frac{n^{z}}{(n+x)^{z+1}} - \frac{1}{n} \right),
\end{equation}
and \eqref{int-a1} gives
\begin{multline}
\label{int-a2}
\frac{1}{2 \pi i } 
\int_{\lambda - i \infty}^{\lambda + i \infty} H(x,w,z) dw = 
x^{z/2} \Gamma(1+z) \sum_{n=1}^{\infty} \sigma_{-z}(n) 
\left( \frac{n^{z}}{(n+x)^{z+1}} - \frac{1}{n} \right) \\
- R(-z/2) - R(1 + z/2) - R(1-z/2). 
\end{multline}

The computation of the residues at the poles yields the values
\begin{eqnarray}\label{residues}
\\ 
R(-z/2) 
& = & - \tfrac{1}{2} \Gamma(1+z) \zeta(-z)x^{-1-z/2} \nonumber\\
R(1+ z/2) & = & -x^{z/2} \Gamma(1+z) \left[ (2 \gamma + \log x + 
\psi(1+z)) \zeta(1+z) + \zeta'(1+z) \right] \nonumber\\
R(1-z/2) & = & x^{-z/2} \Gamma(z) \zeta(1-z). \nonumber 
\end{eqnarray} 
\noindent
For example, 
\begin{eqnarray*}
R(-z/2)& = &  \lim\limits_{w \to -\tfrac{z}{2}} H(x,w,z) \\
& = &  \lim\limits_{w \to -\tfrac{z}{2}} \left(w + \frac{z}{2} \right) 
\Gamma\left(w + \frac{z}{2} \right)
\Gamma \left(1-w+\frac{z}{2} \right) 
\zeta \left(w -\frac{z}{2} \right) 
\zeta \left(w+\frac{z}{2} \right)x^{w-1} \\
& = &  \lim\limits_{w \to - \tfrac{z}{2}} 
\Gamma \left(w + \frac{z}{2} + 1 \right) 
\Gamma \left(1-w+ \frac{z}{2} \right) 
\zeta \left(w - \frac{z}{2} \right) \zeta \left(w+ \frac{z}{2} \right)
x^{w-1}  \\
& = & -\frac{1}{2} \Gamma(1+z)\zeta(-z) x^{-z/2-1},
\end{eqnarray*}
\noindent
using $\Gamma(1)=1$ and $\zeta(0) = -1/2$. 

Now \eqref{lambda-def}, \eqref{ixw}, \eqref{int-a2} and \eqref{residues} give for $c=$ Re $s>1\pm$ Re $\frac{z}{2}$,
\begin{equation}
I(x,z)=\Lambda(x,z)+\frac{1}{2}\Gamma(1+z)\zeta(-z)x^{-1-\frac{z}{2}}.
\end{equation}
Finally, if $\pm$ \textup{Re} $\frac{z}{2}<c=$ \textup{Re} $s<1\pm $ \textup{Re} $\frac{z}{2}$, the residue theorem again produces
\begin{equation}
I(x, z)=\Lambda(x, z).
\end{equation}
Then,  \eqref{formula-313} implies $f(x, z)=\Lambda(x, z)$. Corollary 
\ref{cor5.4} now establishes the first equality in \eqref{formula-121}.

\smallskip

\noindent
\textbf{Second proof:} An alternative proof of the first equality begins with the integral on the right-hand side of 
\eqref{formula-121} written in the form
\begin{multline}
\label{formula-326}
I(z; \alpha)  =  \int_{0}^{\infty} 
\Gamma \left( \frac{z-1+ it}{4} \right) 
\Gamma \left( \frac{z-1- it}{4} \right) 
\Gamma \left( \frac{z+1+ it}{4} \right)  \\
\times \Gamma \left( \frac{z+1- it}{4} \right) 
\Xi \left( \frac{t + iz}{2} \right) 
\Xi \left( \frac{t - iz}{2} \right) 
\frac{\cos( \tfrac{1}{2} t \log \alpha)}{(z+1)^{2} + t^{2}} \, dt.
\end{multline}
 
Let 
\begin{equation}
f(z,t) = \frac{1}{(z+1)^{2} + 4t^{2}} 
\Gamma \left( \frac{z-1}{4} + \frac{it}{2} \right)
\Gamma \left( \frac{z-1}{4} - \frac{it}{2} \right)
\Gamma \left( \frac{z+1}{4} + \frac{it}{2} \right)
\Gamma \left( \frac{z+1}{4} - \frac{it}{2} \right)
\end{equation}
\noindent
that admits a factorization of the type \eqref{factorization-0} with 
\begin{equation}
\phi(z,s) = \frac{1}{1+z+2s} 
\Gamma \left( \frac{z-1}{4} + \frac{s}{2} \right)
\Gamma \left( \frac{z+1}{4} + \frac{s}{2} \right).
\end{equation}
\noindent
Using \eqref{formula-210}, 
\begin{eqnarray*}
I(z,\alpha) & = & \frac{1}{i \sqrt{\alpha}} 
\int_{\tfrac{1}{2} - i \infty}^{\tfrac{1}{2} + i \infty} 
\frac{1}{(z+2s)(z+2-2s)}  \\
& & \times \Gamma \left( \frac{z}{4} + \frac{s-1}{2} \right)
\Gamma \left( \frac{z}{4} + \frac{s}{2} \right)
\Gamma \left( \frac{z}{4} - \frac{s}{2} \right)
\Gamma \left( \frac{z}{4} + \frac{1-s}{2} \right) \nonumber \\
& & \times \frac{1}{2} \left(s - \frac{z}{2}\right) \left(s - \frac{z}{2} -1 \right) 
\pi^{-(s-z/2)/2} 
\Gamma \left( \frac{s}{2} - \frac{z}{4} \right)
\zeta \left( s - \frac{z}{2} \right) \nonumber \\
& & \times \frac{1}{2} \left(s + \frac{z}{2}\right) \left(s + \frac{z}{2} -1 \right) 
\pi^{-(s+z/2)/2} 
\Gamma \left( \frac{s}{2} + \frac{z}{4} \right)
\zeta \left( s + \frac{z}{2} \right)  \alpha^{s} \, ds.
\nonumber
\end{eqnarray*}
\noindent 
Simplifying the integrand, this is written as 
\begin{equation}
\label{int-forG}
I(z,\alpha) = \frac{\pi}{2^{z+1}i \sqrt{\alpha}} 
\int_{\tfrac{1}{2} - i \infty}^{\tfrac{1}{2} + i \infty} 
G(\alpha, s, z) ds,
\end{equation}
\noindent 
with
\begin{multline}
\label{formula-329} 
G(\alpha,s,z) = 
\Gamma \left( s + \frac{z}{2} \right)
\Gamma \left( \frac{z}{2} -s + 1 \right)
\Gamma \left( \frac{s}{2} - \frac{z}{4} \right)
\Gamma \left( \frac{s}{2} + \frac{z}{4} \right) \\
\times \zeta \left( s - \frac{z}{2} \right)
\zeta \left( s + \frac{z}{2} \right)
\left( \frac{\pi}{\alpha} \right)^{-s}.
\end{multline}

To evaluate this last integral one employs \eqref{expansion-sigma}. As 
before, this requires to move the line of integration from $\realpart{s} = 
\tfrac{1}{2}$ to $\realpart{s} = \tfrac{3}{2}$. In this 
shift, one encounters a simple 
pole  at $s = 1 - z/2$ and a double pole at $s = 1 + z/2$. The residue 
theorem produces 
\begin{equation}\label{resG}
\int_{\tfrac{1}{2} - i \infty}^{\tfrac{1}{2} + i \infty} 
G(\alpha, s, z) ds = 
\int_{\tfrac{3}{2} - i \infty}^{\tfrac{3}{2} + i \infty} 
G(\alpha, s, z) ds  - 2 \pi i \left( R(1- z/2) + R(1 +z/2) \right),
\end{equation}
\noindent
where $R(a)$ is the residue of $G(\alpha,s,z)$ at the pole $s= a$. A direct 
computation shows that 
\begin{equation}\label{r1}
R( 1 - z/2) = \frac{1}{z} \alpha^{1-z/2} \pi^{-z/2} \Gamma \left( \frac{z}{2} 
\right) \Gamma(z+1) \zeta(z)
\end{equation}
\noindent
and
\begin{align}
\label{r2}
R( 1 + z/2)  & = - \frac{\alpha^{1+z/2}}{2 \pi^{\tfrac{1}{2}(z+1)}} 
\Gamma \left( \frac{z+1}{2} \right) \Gamma(z+1)  \\
&\quad\times  
\left[ \left( 3 \gamma - 2 \log(2 \pi /\alpha) + \psi 
\left( \tfrac{1}{2}(z+1) \right) + 2 \psi(z+1) \right) \zeta(z+1) +
2 \zeta'(z+1) \right].
\nonumber
\end{align}

The expansion \eqref{expansion-sigma} gives
\begin{multline*}
\int_{\tfrac{3}{2} - i \infty}^{\tfrac{3}{2} + i \infty} 
G(\alpha, s, z) ds   = \\
\sum_{n=1}^{\infty} \sigma_{-z}(n) n^{z/2} 
\int_{\tfrac{3}{2} - i \infty}^{\tfrac{3}{2} + i \infty} 
\Gamma \left( s + \frac{z}{2} \right)
\Gamma \left(  \frac{z}{2} -s + 1\right)
\Gamma \left(  \frac{s}{2} - \frac{z}{4} \right)
\Gamma \left(  \frac{s}{2} + \frac{z}{4} \right)
\left( \frac{\pi n}{\alpha} \right)^{-s} ds.
\end{multline*}

In order to evaluate the integral on the right-hand side, one would like to 
use 
\eqref{parseval-2} with 
\begin{equation}\label{FG}
F(s) = \Gamma \left( s + \frac{z}{2} \right) 
\Gamma \left( \frac{z}{2} - s + 1 \right) \text{ and }
G(s) = \Gamma \left( \frac{s}{2} - \frac{z}{4} \right) 
\Gamma \left( \frac{s}{2} + \frac{z}{4} \right). 
\end{equation}
\noindent
Now \eqref{mellin-K} yields 
\begin{equation}\label{mellin-Kz2}
\frac{1}{2 \pi i} \int_{c - i \infty}^{c + i \infty} 
\Gamma \left( \frac{s}{2} - \frac{z}{4} \right) 
\Gamma \left( \frac{s}{2} + \frac{z}{4} \right) x^{-s} ds = 4 K_{z/2}(2x),
\end{equation}
\noindent 
for $c = \realpart{s} > \pm \realpart{z/2}$ and this is true when $c=\frac{3}{2}$. However, (\ref{melfs}) (with $w$ replaced by $s$) holds only for $-$ Re $\frac{z}{2}<d=$ Re $s<1+$ Re $\frac{z}{2}$, which is not satisfied when Re $s=\frac{3}{2}$ and $-1<$ Re $z<1$. Thus, the line of integration has to be moved from $\realpart{s} = \tfrac{3}{2}$ to 
$\realpart{s} = \tfrac{1}{2}$. This process captures a pole at $s=1+\frac{z}{2}$ and the residue 
theorem yields 
\begin{multline}
\int_{\tfrac{3}{2} - i \infty}^{\tfrac{3}{2} + i \infty} 
\Gamma \left( s + \frac{z}{2} \right)
\Gamma \left(  \frac{z}{2} -s + 1\right)
\Gamma \left(  \frac{s}{2} - \frac{z}{4} \right)
\Gamma \left(  \frac{s}{2} + \frac{z}{4} \right)
\left( \frac{\pi n}{\alpha} \right)^{-s} ds  \label{formula-338} \\
 = \int_{\tfrac{1}{2} - i \infty}^{\tfrac{1}{2} + i \infty} 
\Gamma \left( s + \frac{z}{2} \right)
\Gamma \left(  \frac{z}{2} -s + 1\right)
\Gamma \left(  \frac{s}{2} - \frac{z}{4} \right)
\Gamma \left(  \frac{s}{2} + \frac{z}{4} \right)
\left( \frac{\pi n}{\alpha} \right)^{-s} ds \nonumber \\
\quad - 2 \pi i \Gamma(z+1) \Gamma \left( \frac{z+1}{2} \right) 
\frac{\alpha^{1 + z/2}}{\pi^{\tfrac{z+1}{2}} n^{ 1 + \tfrac{z}{2}}}.
\nonumber
\end{multline}
\noindent
Now \eqref{parseval-2}, \eqref{melfs}, \eqref{FG} and \eqref{mellin-Kz2} yield
\begin{multline}
\int_{\tfrac{1}{2} - i \infty}^{\tfrac{1}{2} + i \infty} 
\Gamma \left( s + \frac{z}{2} \right)
\Gamma \left(  \frac{z}{2} -s + 1\right)
\Gamma \left(  \frac{s}{2} - \frac{z}{4} \right)
\Gamma \left(  \frac{s}{2} + \frac{z}{4} \right)
\left( \frac{\pi n}{\alpha} \right)^{-s} ds \\
= 4 \Gamma(z+1) \int_{0}^{\infty} \frac{x^{z/2} K_{z/2} \left( 
\frac{2 \pi n x}{\alpha} \right) }{(1+x)^{z+1}} \, dx
\end{multline}
\noindent 
and so this leads to 
\begin{multline}
\int_{\tfrac{3}{2} - i \infty}^{\tfrac{3}{2} + i \infty} 
G( \alpha, s ,z ) ds = \\
8 \pi i \Gamma(z+1) 
\int_{0}^{\infty} x^{z/2} K_{z/2} \left( \frac{2 \pi x}{\alpha} \right) 
\sum_{n=1}^{\infty} 
\sigma_{-z}(n) \left( \frac{n^{z}}{(n+x)^{z+1}} - \frac{1}{n} \right),
\end{multline}
where we used the integral representation
\begin{equation*}
\Gamma\left(\frac{z+1}{2}\right)\frac{\alpha^{1+\frac{z}{2}}}{\pi^{\frac{z+1}{2}}n^{1+\frac{z}{2}}}=4\int_{0}^{\infty}x^{z/2}K_{\frac{z}{2}}\left(\frac{2\pi nx}{\alpha}\right)\, dx
\end{equation*}
has been used. Using 
\eqref{resG}, the integral \eqref{int-forG} now becomes 
\begin{multline}\label{izalpha} 
I(z, \alpha) = \frac{\pi^{2}}{2^{z} \sqrt{\alpha}} 
\left\{ -\left( R \left(1-\frac{z}{2} \right) + 
R \left(1+ \frac{z}{2} \right) \right) \right. \\
\left.  \quad + 4 \Gamma(z+1) 
\int_{0}^{\infty} x^{z/2} K_{z/2} \left( \frac{2 \pi x}{\alpha} \right) 
\sum_{n=1}^{\infty} 
\sigma_{-z}(n) \left( \frac{n^{z}}{(n+x)^{z+1}} - \frac{1}{n} \right)
\right\}.
\end{multline}

The next step in the proof is to obtain an integral representation for the sum of the residues in the form 
\begin{equation*}
\int_{0}^{\infty} x^{z/2} K_{z/2}(2 \pi \alpha x) \lambda(x,z) dx 
\end{equation*}
\noindent
for an appropriate function $\lambda(x,z)$. 

The choice $w = 1 + z/2, \, \nu = z/2$ and $m = 2 \pi /\alpha$ in \eqref{mkl} gives 
\begin{multline}\label{r3}
\int_{0}^{\infty} 4 x^{z/2} K_{z/2}\left( \frac{2 \pi x}{\alpha} \right)
 \log x \, dx =  \\
\frac{\alpha^{1 + z/2}}{2 \pi^{\tfrac{z+1}{2}}} 
\Gamma \left( \frac{z + 1}{2} \right)
\left[ -\gamma 
-2 \log \left( \frac{2 \pi}{\alpha} \right) +
\psi \left( \frac{z+1}{2} \right) \right]. 
\end{multline}
\noindent
Now \eqref{mellin-K} produces 
\begin{equation}\label{r4}
\int_{0}^{\infty} 4 x^{z/2} K_{z/2} \left( \frac{2 \pi x}{\alpha} \right) 
\frac{x^{-z}}{-z} \zeta(1-z) dx = 
-\frac{1}{z} \pi^{-z/2} \alpha^{1-z/2} \Gamma \left( \frac{z}{2} \right) 
\zeta(z),
\end{equation}
\noindent
and also
\begin{multline}\label{r5}
\int_{0}^{\infty} 4x^{z/2} K_{z/2} \left( \frac{2 \pi x}{\alpha} \right) 
\left[ \left(2 \gamma + \psi(z+1) \right) \zeta(z+1) + \zeta'(z+1) \right] dx \\
= \frac{\alpha^{1+z/2}}{\pi^{\tfrac{z+1}{2}}}
\Gamma \left( \frac{z+1}{2} \right)
\left[ 
\left(2 \gamma + \psi(z+1) \right) \zeta(z+1) + \zeta'(z+1) \right].
\end{multline}
Now \eqref{r1}, \eqref{r2}, \eqref{r3}, \eqref{r4} and \eqref{r5} yield
\begin{multline}\label{sumres}
R(1-\frac{z}{2})+R(1+\frac{z}{2})\\
=-\Gamma(z+1)\int_{0}^{\infty}4x^{\frac{z}{2}}K_{\frac{z}{2}}\left(\frac{2\pi x}{\alpha}\right)\bigg\{\frac{x^{-z}}{-z}\zeta(1-z)+(2\gamma+\log x+\psi(z+1))\zeta(z+1)+\zeta'(z+1)\bigg\}\, dx.
\end{multline}
Substituting \eqref{sumres} in \eqref{izalpha} gives the equality between the extreme left and right sides of \eqref{formula-121}, and the invariance of the right side under $\alpha\to 1/\alpha$ establishes the other as well.

\smallskip

\noindent
\textit{\textbf{Proof of Theorem \ref{fergenthm}}}. Since the 
proof of Theorem \ref{fergenthm} is similar to that of Theorems \ref{thm-45} 
and \ref{thm-46}, a brief outline is presented. Let 
\begin{equation}
F(s, z)=\Gamma\left(\frac{s}{2}+\frac{z}{4}\right)\Gamma\left(\frac{1}{2}-\frac{s}{2}+\frac{z}{4}\right),
\end{equation}
apply Theorem \ref{production-1} to find by means of contour integration that $f(x, z)=\mathfrak{F}(x, z)$, as in \eqref{frakf}. Then as before, Corollary \ref{cor5.4} gives the transformation in (\ref{fergen}). Also, starting from the integral on the extreme right of (\ref{fergen}), converting it into a complex integral using (\ref{formula-210}), and then using the residue theorem and Mellin transforms, one finds this integral to be equal to one of the two sides of the transformation in \eqref{fergen}. The transformation itself is then obtained by replacing $\alpha$ by $\beta$ in this integral involving the Riemann $\Xi$ function.

\bigskip

\section{A generalization of a series of Koshlyakov}
\label{sec-omega}

Koshlyakov \cite{koshlyakov-1929a} considered the function 
\begin{equation}
\Omega(x) = 2 \sum_{n=1}^{\infty} d(n) \left[ K_{0} \left( 4 \pi e^{i \pi/4} 
\sqrt{nx} \right) + 
K_{0} \left( 4 \pi e^{-i \pi/4} 
\sqrt{nx} \right) \right]\label{koshomega}
\end{equation}
and used it to give a short and clever proof of the Vorono\"{\dotlessi} summation formula.
\noindent
In \cite[Equation 5]{koshlyakov-1929a}, he established the identity 
\begin{equation}
\label{omega-11}
\Omega(x) = - \gamma - \frac{1}{2} \log x - \frac{1}{4 \pi x} +
\frac{x}{\pi} \sum_{n=1}^{\infty} \frac{d(n)}{x^{2} + n^{2}}. 
\end{equation}

The reader will find in \cite{dixit-2013g} how to establish \eqref{omega-11} 
using \eqref{lostomega}.
Koshlyakov \cite[Equation (6)]{koshlyakov-1936b}, 
\cite[Equations (21), (27)]{koshlyakov-1937a} showed that
\begin{align}\label{koshram}
\sqrt{\alpha}\int_{0}^{\infty}e^{-2\pi\alpha x}\left(\Omega(x)+\frac{1}{4\pi x}\right)\, dx &= \sqrt{\beta}\int_{0}^{\infty}e^{-2\pi\beta x}\left(\Omega(x)+\frac{1}{4\pi x}\right)\, dx\nonumber\\
&=\dfrac{1}{2\pi^{5/2}}\int_0^{\infty}\left|\Xi\left(\dfrac{1}{2}t\right)\Gamma\left(\dfrac{-1+it}{4}\right)\right|^2
\dfrac{\cos\left(\tfrac{1}{2}t\log\alpha\right)}{1+t^2}\, dt.
\end{align}
See \cite{dixit-2013g} for the proof of the equivalence 
of the formula in Theorem \ref{thm-3.4}
and (\ref{koshram}) without appealing to the integral involving the Riemann $\Xi$-function in the 
identities.  Koshlyakov \cite[Equations (8), (24)]{koshlyakov-1934a}, 
\cite[Equation (15)]{koshlyakov-1936a} also 
found another transformation involving $\Omega(x)$, namely, 
\begin{align}\label{jbessel}
&\sqrt{\alpha^3}\int_{0}^{\infty}xJ_{0}(2\pi\alpha x)\left(\Omega(x)+\frac{1}{4\pi x}\right)\, dx=\sqrt{\beta^3}\int_{0}^{\infty}xJ_{0}(2\pi\beta x)\left(\Omega(x)+\frac{1}{4\pi x}\right)\, dx\nonumber\\
&=\frac{1}{64\pi^{5}}\int_{0}^{\infty}\left|\Gamma^2\left(\frac{-1+it}{4}\right)\right|^2\Xi^{2}\left(\frac{t}{2}\right)\cosh\left(\frac{1}{2}\pi t\right)\cos\left(\tfrac{1}{2}t\log\alpha\right)\, dt.
\end{align}
A new generalization of $\Omega(x)$, defined by
\begin{equation}
\Omega(x,z) = 2 \sum_{n=1}^{\infty} \sigma_{-z}(n) n^{z/2} 
\left( e^{\pi i z/4} K_{z}( 4 \pi e^{\pi i/4} \sqrt{nx} ) +
 e^{-\pi i z/4} K_{z}( 4 \pi e^{-\pi i/4} \sqrt{nx} ) \right),
\end{equation}
\noindent
is considered next. It 
is clear that $\Omega(x, 0)=\Omega(x)$. The inverse Mellin transform
\begin{equation}
\label{Mellin-omega}
\Omega(x,z) = \frac{1}{2 \pi i} 
\int_{c - i \infty} ^{c + i \infty} 
\frac{\zeta(1 - s + \tfrac{z}{2}) \zeta(1 -s  - \tfrac{z}{2} ) }
{2 \cos\left( \tfrac{1}{2} \pi \left( s + \tfrac{z}{2} \right)\right)} x^{-s} ds
\end{equation}
is validd for $c=$ Re $s>1\pm$ Re $\frac{z}{2}$. The 
special case $z=0$ was given by Koshlyakov 
\cite[Equation (11)]{koshlyakov-1936a} and the details for deriving the general case are similar to this special case.

The function $\Omega(x, z)$ plays an important role in deriving a 
simpler proof of the generalization
of the Vorono\"{\dotlessi} summation formula. See \cite{berndt-2013a} for 
details.

\begin{proposition}\label{propomega}
For $z\notin\mathbb{Z}$, the function $\Omega(x,z)$ is given by 
\begin{equation}
\Omega(x,z) = - \frac{\Gamma(z) \zeta(z)}{(2 \pi \sqrt{x})^{z}}  +
\frac{x^{z/2-1}}{2 \pi} \zeta(z) - 
\frac{x^{z/2}}{2} \zeta(z+1)  +
\frac{x^{z/2+1}}{\pi} \sum_{n=1}^{\infty} \frac{\sigma_{-z}(n)}{n^{2}+x^{2}}.
\end{equation}
\end{proposition}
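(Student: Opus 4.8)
\medskip

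\noindent\textbf{Proof proposal for Proposition \ref{propomega}.}
The plan is to evaluate the Mellin--Barnes representation \eqref{Mellin-omega} by shifting the line of integration $\realpart{s} = c$, with $c > 1 + \abs{\realpart{z/2}}$, to the left toward $\realpart{s} = -\infty$ and summing the residues crossed. The integrand
\[
\Phi(s) = \frac{\zeta(1-s+z/2)\,\zeta(1-s-z/2)}{2\cos\left(\tfrac{1}{2}\pi(s+z/2)\right)}\,x^{-s}
\]
has singularities only at the two simple poles $s = \pm z/2$, coming from the poles of the zeta factors at argument $1$, and at the simple zeros $s = 1 - \tfrac{z}{2} + 2k$, $k \in \mathbb{Z}$, of the cosine in the denominator. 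The hypothesis $z \notin \mathbb{Z}$ guarantees that all these poles are simple and mutually distinct, so each residue is elementary. Taking $c$ just to the right of $1 + \abs{\realpart{z/2}}$, the poles crossed on moving left are exactly $s = z/2$, $s = -z/2$, $s = 1 - z/2$ (the $k=0$ cosine zero), and the cosine zeros $s = 1 - \tfrac{z}{2} - 2m$ for $m \ge 1$; the remaining cosine zeros $s = 1 - \tfrac{z}{2} + 2m$ lie to the right of the contour, and in any case carry a vanishing factor $\zeta(-2m) = 0$.

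A direct computation gives the residue at $s = -z/2$ equal to $-\tfrac12 \zeta(1+z)\,x^{z/2}$ (using $\cos 0 = 1$), which is the third term, and the residue at $s = 1 - z/2$ equal to $\tfrac{1}{2\pi}\zeta(z)\,x^{z/2-1}$ (using $\zeta(0) = -\tfrac12$ together with the residue $\tfrac{(-1)^{k+1}}{\pi}$ of $1/(2\cos)$ at its $k$-th zero), which is the second term. The residue at $s = z/2$ equals $-\tfrac{\zeta(1-z)}{2\cos(\pi z/2)}\,x^{-z/2}$; the one non-mechanical point is to recognize, via the functional equation \eqref{zetafe} and the reflection formula $\Gamma(z)\Gamma(1-z) = \pi/\sin\pi z$, that
\[
\frac{\zeta(1-z)}{2\cos(\pi z/2)} = \frac{\Gamma(z)\zeta(z)}{(2\pi)^z},
\]
so that this residue is precisely the first term $-\Gamma(z)\zeta(z)(2\pi\sqrt{x})^{-z}$.

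The residue at the cosine zero $s = 1 - \tfrac{z}{2} - 2m$, $m \ge 1$, equals $\tfrac{(-1)^{m+1}}{\pi}\,\zeta(2m)\,\zeta(2m+z)\,x^{z/2-1+2m}$. Summing over $m$ and inserting the Dirichlet series $\zeta(2m)\zeta(2m+z) = \sum_{n\ge1}\sigma_{-z}(n)\,n^{-2m}$, a special case of \eqref{expansion-sigma}, followed by an interchange of summation and the geometric evaluation $\sum_{m\ge1}(-1)^{m+1}(x/n)^{2m} = x^2/(n^2+x^2)$, collapses this family of residues to $\tfrac{x^{z/2+1}}{\pi}\sum_{n\ge1}\sigma_{-z}(n)/(n^2+x^2)$, the fourth term. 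This resummation converges for $0 < x < 1$, where the geometric series is valid for every $n \ge 1$.

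The main obstacle is justifying the contour shift, namely that the integral over the left vertical line tends to $0$. I would run the shifted line midway between two consecutive cosine zeros, at $\realpart{s} = -2N$, where $1/\cos\left(\tfrac{\pi}{2}(s+z/2)\right)$ stays bounded and decays like $e^{-\tfrac{\pi}{2}\abs{\imagpart{s}}}$, while the zeta factors tend to $1$ and $\abs{x^{-s}} = x^{2N}$. For $0 < x < 1$ this forces the remainder to vanish as $N \to \infty$, and the vanishing of the horizontal segments as $\imagpart{s} \to \pm\infty$ follows from the same decay, in the spirit of the estimates controlled by \eqref{strivert}. This establishes the identity for $0 < x < 1$; since both sides are analytic in $x$ on $(0,\infty)$ — the defining Bessel series on the left and the absolutely convergent series on the right — the identity extends to all $x > 0$ by analytic continuation.
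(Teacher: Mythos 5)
Your proof is correct, but it is not the route the paper takes: the paper deduces Proposition \ref{propomega} in one line from the theorem of H.~Cohen \cite{cohen-2010a} quoted immediately after it, by taking $k=1$ there, replacing $x$ by $ix$ and by $-ix$, and adding. Your argument instead evaluates the Mellin--Barnes representation \eqref{Mellin-omega} by pushing the contour to the left and summing residues, which is in effect the generalization to arbitrary $z$ of Koshlyakov's derivation of the $z=0$ identity \eqref{omega-11}. All of your computations check: the residues at $s=z/2$, $s=-z/2$ and $s=1-z/2$ give the first three terms (your use of \eqref{zetafe} together with the reflection formula to identify $\zeta(1-z)/(2\cos(\pi z/2))$ with $\Gamma(z)\zeta(z)(2\pi)^{-z}$ is exactly the right normalization), the poles at $s=1-z/2+2m$, $m\geq 1$, are harmless because of $\zeta(-2m)=0$, and the geometric resummation of the remaining residues against \eqref{expansion-sigma} produces the closing series for $0<x<1$, with analytic continuation in $x$ finishing the proof. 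What your route buys is self-containedness: no appeal to Cohen's nontrivial theorem, whose use in the paper also tacitly requires continuing his identity to the complex arguments $\pm ix$. What the paper's route buys is brevity and the avoidance of all contour estimates. Two points of hygiene in your version: the far-left vertical lines should be chosen genuinely midway between the pole abscissae $1-\realpart{z/2}+2k$ --- your lines $\realpart{s}=-2N$ do this only when $\abs{\realpart{z}}<1$, which is the range the paper actually uses, whereas for instance $\realpart{z}=2$, $\imagpart{z}\neq 0$ makes them pass through poles --- and on the horizontal segments the factors $\zeta(1-s\pm z/2)$ are not near $1$ but grow at most polynomially in $\abs{\imagpart{s}}$ (standard zeta bounds; \eqref{strivert} is Stirling for $\Gamma$ and is not the relevant estimate), which is still dominated by the exponential decay of $1/\cos\left(\tfrac{\pi}{2}(s+z/2)\right)$. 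Finally, both your $m=1$ resummation step and the series in the statement itself tacitly need $\realpart{z}>-1$; the paper's bare hypothesis $z\notin\mathbb{Z}$ glosses this to exactly the same extent.
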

\noindent
Actually, the above result is also true for $z=0$ (in the limiting sense) in view of (\ref{omega-11}) and the fact
that $\Omega(x, z)$ is continuous at $z=0$.

The result in the proposition can be directly derived from the next theorem of 
H. Cohen \cite{cohen-2010a}. Simply take $k=1$ and replace $x$ by 
$ix$ and then by $-ix$ and add the results.\\

\begin{theorem}[H. Cohen]
Let $z \not \in \mathbb{Z}$ be such that $\realpart{z} \geq 0$. For any 
integer $k \geq \lf \tfrac{1}{2}(\realpart{z} + 1 ) \rf$, 
\begin{multline*}
8 \pi x^{z/2} \sum_{n=1}^{\infty} 
\sigma_{-z}(n) n^{z/2} K_{z}(4 \pi \sqrt{nx}) =
A(z,x) \zeta(z) + B(z,x) \zeta(z+1) \\
+ \frac{2}{\sin(\pi z/2)} 
\left( \sum_{1 \leq j \leq k} \zeta(2j) \zeta(2j-z) x^{2j-1} +
x^{2k+1} \sum_{n=1}^{\infty} \sigma_{-z}(n) 
\frac{n^{z-2k} - x^{z-2k}}{n^{2}-x^{2}} \right),
\end{multline*}
\noindent
where
\begin{equation*}
A(z,x) = \frac{x^{z-1}}{\sin(\pi z/2)} - \frac{\Gamma(z)}{(2 \pi)^{z-1}} 
\text{ and }
B(z,x) = \frac{2}{x}\frac{\Gamma(z+1)}{(2 \pi)^{z+1}} - 
\frac{\pi x^{z}}{\cos(\pi z/2)}.
\end{equation*}
\end{theorem}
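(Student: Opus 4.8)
The plan is to apply, directly to the Bessel series on the left-hand side, the same Mellin--Barnes and residue machinery that drives the proofs of Theorems~\ref{thm-45} and~\ref{thm-46}. First I would produce a line-integral representation of the series. Writing each $K_{z}(4\pi\sqrt{nx})$ as an inverse Mellin transform in the variable $\sqrt{nx}$ by means of \eqref{mellin-K}, summing over $n$, and collapsing the resulting Dirichlet series with \eqref{expansion-sigma}, one gets, for $c=\realpart{s}$ larger than $2+\realpart{z}$,
\begin{equation*}
8\pi x^{z/2}\sum_{n=1}^{\infty}\sigma_{-z}(n)n^{z/2}K_{z}(4\pi\sqrt{nx})=\frac{1}{2\pi i}\int_{c-i\infty}^{c+i\infty}\Psi(s,z)\,x^{(z-s)/2}\,ds,
\end{equation*}
where $\Psi(s,z)$ equals, up to elementary powers of $2$ and $\pi$, the product $\Gamma\!\left(\tfrac{s-z}{2}\right)\Gamma\!\left(\tfrac{s+z}{2}\right)\zeta\!\left(\tfrac{s-z}{2}\right)\zeta\!\left(\tfrac{s+z}{2}\right)$. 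The interchange of sum and integral is legitimate by absolute convergence once $c$ is large, and the horizontal segments of every contour used below are controlled by Stirling's estimate \eqref{strivert}, exactly as announced at the start of Section~\ref{sec-proofs}. A useful structural remark is that $\Psi(s,z)$ is invariant under $z\mapsto -z$, matching the fact that the Bessel series itself is unchanged by $z\mapsto -z$ (since $\sigma_{-z}(n)n^{z/2}=\sigma_{z}(n)n^{-z/2}$ and $K_{z}=K_{-z}$).

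The core step is to move the line of integration to the left and collect residues. The integrand has simple poles of the two zeta factors at $s=2-z$ and $s=2+z$, and poles of the two gamma factors at $s=z-2m$ and $s=-z-2m$ for $m\ge 0$. Reading off the power $x^{(z-s)/2}$ at each pole, I expect the two terms of $A(z,x)\zeta(z)$ to assemble from the residues at $s=2-z$ (the $x^{z-1}$ piece) and at the $m=0$ gamma pole $s=z$ (the constant $\Gamma(z)$ piece), and similarly $B(z,x)\zeta(z+1)$ to assemble from $s=z+2$ (the $x^{-1}$ piece) and $s=-z$ (the $x^{z}$ piece). Throughout, the functional equation \eqref{zetafe} is what turns the $\zeta(1-z)$ and $\zeta(-z)$ produced at these poles into $\zeta(z)$ and $\zeta(z+1)$ and supplies the $\sin(\pi z/2)$ and $\cos(\pi z/2)$ denominators. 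The gamma poles at $s=z-2m$ with $m$ odd survive (the even trivial zeros of $\zeta$ annihilate the contributions from even $m\ge 2$) and contribute the power $x^{2j-1}$ at $m=2j-1$; there $\zeta\!\left(\tfrac{s-z}{2}\right)=\zeta\bigl(-(2j-1)\bigr)$, and applying \eqref{zetafe} to both zeta factors together with the reflection and duplication identities for $\Gamma$ is what converts these residues into $\tfrac{2}{\sin(\pi z/2)}\zeta(2j)\zeta(2j-z)x^{2j-1}$.

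The remaining integral, along a line $\realpart{s}=c_{k}$ fixed by the integer $k$, is then re-expressed. Applying \eqref{zetafe} to both zeta factors in $\Psi$ and re-expanding by \eqref{expansion-sigma} on the shifted (now convergent) line, the tail integral collapses to the compact series $\tfrac{2}{\sin(\pi z/2)}x^{2k+1}\sum_{n}\sigma_{-z}(n)\tfrac{n^{z-2k}-x^{z-2k}}{n^{2}-x^{2}}$. It is worth checking in advance that this single series does double duty: expanding $1/(n^{2}-x^{2})$ geometrically, the $n^{z-2k}$ part reproduces exactly the continuation of the finite sum $\zeta(2j)\zeta(2j-z)x^{2j-1}$ for $j>k$, while the $x^{z-2k}$ part produces the $z$-shifted terms $\zeta(2\ell+2)\zeta(2\ell+2+z)x^{z+1+2\ell}$, which are precisely the residues at the symmetric gamma poles $s=-z-2m$ that I did not list separately. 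This is both a consistency check and the reason the condition $k\ge\lf\tfrac{1}{2}(\realpart{z}+1)\rf$ appears: it is exactly what makes $\sum_{n}\sigma_{-z}(n)n^{z-2k-2}$ converge.

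The main obstacle is bookkeeping rather than analysis. The delicate part is matching the residues at $s=2\pm z$ and at the $m=0$ gamma poles to the exact closed forms of $A(z,x)$ and $B(z,x)$, and the residues at the odd gamma poles to $\zeta(2j)\zeta(2j-z)$, since each requires chaining the functional equation \eqref{zetafe} with the reflection formula $\Gamma(w)\Gamma(1-w)=\pi/\sin\pi w$ and the duplication formula, and then assembling the $\sin$ and $\cos$ factors without sign or power-of-$2$ errors. The $z\mapsto -z$ symmetry of $\Psi$ is the lever I would use to organize and halve this computation, and to see why the asymmetric-looking right-hand side is nonetheless forced.
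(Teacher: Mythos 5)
A preliminary remark: the paper never proves this theorem at all — it is quoted from Cohen \cite{cohen-2010a} and used only to deduce Proposition \ref{propomega} — so your proposal must be judged on its own merits rather than against an argument of the authors. On those merits, your skeleton is the standard route and your local computations are correct: the Mellin representation with integrand proportional to $\Gamma\left(\frac{s-z}{2}\right)\Gamma\left(\frac{s+z}{2}\right)\zeta\left(\frac{s-z}{2}\right)\zeta\left(\frac{s+z}{2}\right)x^{(z-s)/2}$ is right; the poles $s=2-z,\ z,\ 2+z,\ -z$ do yield the four pieces of $A(z,x)\zeta(z)+B(z,x)\zeta(z+1)$ after \eqref{zetafe} and reflection; the trivial zeros do annihilate the even-$m$ gamma poles; and your decomposition of $x^{2k+1}S_k(x)$, where $S_k(x)=\sum_n\sigma_{-z}(n)\frac{n^{z-2k}-x^{z-2k}}{n^2-x^2}$, into the $j>k$ continuation plus the $-z$-family powers is exact. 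The genuine gap is the contour geometry for general $k$. Your plan needs the final line $\realpart s=c_k$ to satisfy three conditions at once: (i) $c_k<\realpart z-4k+2$, so that the poles $s=z-2(2j-1)$, $1\le j\le k$, are all collected; (ii) $c_k>-\realpart z-2$, so that no pole $s=-z-2m$, $m\ge1$, is collected (your tail is supposed to regenerate all of those from the $x^{z-2k}$ part); and (iii) $c_k<-\realpart z$, so that the Dirichlet re-expansion after applying \eqref{zetafe} to both zeta factors converges. Conditions (i) and (ii) are compatible only when $k<\tfrac12(\realpart z+2)$, so the argument as described fails for every $k\ge\tfrac12(\realpart z+2)$: for such $k$ the shift unavoidably crosses $-z$-family poles, and the tail you then write down double-counts exactly those terms.

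The repair is cheap, which is why this is a gap and not a dead end. Run your contour argument only for the minimal admissible $k_0=\lf\tfrac12(\realpart z+1)\rf$, for which a window satisfying (i)--(iii) does exist, and deduce the statement for every larger $k$ from the telescoping identity
\begin{equation*}
x^{2k+1}S_k(x)-x^{2k+3}S_{k+1}(x)=x^{2k+1}\sum_{n=1}^{\infty}\sigma_{-z}(n)\,n^{z-2k-2}=\zeta(2k+2)\,\zeta(2k+2-z)\,x^{2k+1},
\end{equation*}
which says precisely that passing from $k$ to $k+1$ moves the $j=k+1$ term from the series into the finite sum; hence the right-hand side of Cohen's formula is independent of $k$, and no further contour work is needed (the hypothesis $k\ge\lf\tfrac12(\realpart z+1)\rf$ is exactly the convergence condition for $S_k$). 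One more point should be made explicit: both the closed-form evaluation of the shifted integral and your geometric-series consistency check require $x<n$ for every $n$, i.e. $0<x<1$, so the identity is first obtained on $(0,1)$ and then extended to all $x>0$ by analytic continuation, both sides being locally uniformly convergent, hence analytic, on $(0,\infty)$.
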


\medskip

A generalization of \eqref{koshram} is stated next. 
\begin{theorem}\label{thm-ram00}
Assume $-1 < \realpart{z} < 1$. Then for $\alpha, \beta>0, \alpha\beta=1$, 
\begin{multline*}
\alpha^{(z+1)/2} 
\int_{0}^{\infty} e^{-2 \pi \alpha x} x^{z/2} 
\left( \Omega(x,z) - \frac{1}{2 \pi} \zeta(z) x^{z/2-1} \right) dx \\
=\beta^{(z+1)/2} 
\int_{0}^{\infty} e^{-2 \pi \beta x} x^{z/2} 
\left( \Omega(x,z) - \frac{1}{2 \pi} \zeta(z) x^{z/2-1} \right) dx\\
=\frac{8}{\pi^{(z+5)/2}}
\int_{0}^{\infty} 
\Gamma \left( \frac{z+3+it}{4} \right)
\Gamma \left( \frac{z+3-it}{4} \right)
\Xi \left( \frac{t + iz}{2} \right)
\Xi \left( \frac{t - iz}{2} \right) \\
\times \frac{\cos( \tfrac{1}{2} t \log \alpha) \, dt}{(t^{2} + (z+1)^{2})
(t^{2} + (z-1)^{2})}.
\end{multline*}
\end{theorem}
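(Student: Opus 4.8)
The plan is to follow the second method used in the proofs of Theorems~\ref{thm-45} and \ref{thm-46}: start from the integral involving the Riemann $\Xi$-function on the extreme right, convert it into a line integral by means of \eqref{formula-210}, evaluate that line integral, and identify the result with the Laplace-type integral on the left. Since the cosine is even, the $\Xi$-integrand is manifestly invariant under $\alpha\mapsto 1/\alpha$, so once either outer side is shown to equal it, the equality of the two outer sides follows for free.

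First I would record the factorization \eqref{factorization-0}. Writing $f(z,t)=\phi(z,it)\phi(z,-it)$, a direct check shows that
\[
\phi(z,s)=\frac{\Gamma\!\left(\frac{z+3}{4}+\frac{s}{2}\right)}{(2s+z+1)(2s+z-1)}
\]
reproduces the two gamma factors $\Gamma\!\left(\tfrac{z+3\pm it}{4}\right)$ and the denominator $(t^{2}+(z+1)^{2})(t^{2}+(z-1)^{2})$ of the $\Xi$-integral in the statement. Feeding $\phi(z,s-\tfrac12)\phi(z,\tfrac12-s)$ into \eqref{formula-210} and expanding each $\xi$ through \eqref{def-xi}, the four linear factors $s\pm\tfrac{z}{2}$ and $s\pm\tfrac{z}{2}-1$ coming from the $\tfrac12 w(w-1)$ prefactors of the two $\xi$'s cancel exactly against the four denominator factors of $\phi(z,s-\tfrac12)\phi(z,\tfrac12-s)$. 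The surviving gamma factors combine: the pair $\Gamma(\tfrac{s}{2}+\tfrac{z}{4})\Gamma(\tfrac{s}{2}+\tfrac{z}{4}+\tfrac12)$ collapses by the duplication formula to a single $\Gamma(s+\tfrac{z}{2})$, while the pair $\Gamma(\tfrac{s}{2}-\tfrac{z}{4})\Gamma(1-\tfrac{s}{2}+\tfrac{z}{4})$ collapses by the reflection formula to $\pi/\sin\!\left(\pi(\tfrac{s}{2}-\tfrac{z}{4})\right)$. The upshot, up to an elementary constant and a power of $2\pi$ that I would track to recover $8/\pi^{(z+5)/2}$, is a line integral of the shape
\[
\frac{1}{\sqrt{\alpha}}\,\frac{1}{2\pi i}\int_{(1/2)}
\frac{\Gamma\!\left(s+\frac{z}{2}\right)\,\zeta\!\left(s-\frac{z}{2}\right)\,\zeta\!\left(s+\frac{z}{2}\right)}
{\sin\!\left(\pi\!\left(\frac{s}{2}-\frac{z}{4}\right)\right)}\left(\frac{\alpha}{2\pi}\right)^{s}ds .
\]

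To identify this with the left-hand side I would Laplace-transform $x^{z/2}\Omega(x,z)$ directly from its Mellin representation \eqref{Mellin-omega}, using $\int_{0}^{\infty}e^{-2\pi\alpha x}x^{z/2-w}\,dx=\Gamma\!\left(\tfrac{z}{2}-w+1\right)(2\pi\alpha)^{w-z/2-1}$ inside the contour integral. After the substitution $w=1-s$ the factor $2\cos\!\left(\tfrac{\pi}{2}(w+\tfrac{z}{2})\right)$ becomes $2\sin\!\left(\pi(\tfrac{s}{2}-\tfrac{z}{4})\right)$ and the two zeta factors become $\zeta(s\mp\tfrac{z}{2})$, so one lands on the very same integrand, now carried by $\alpha^{(z+1)/2}(2\pi\alpha)^{-z/2}=\sqrt{\alpha}$. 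The two expressions are then reconciled through the reflection $s\mapsto 1-s$ of the line $\realpart s=\tfrac12$ together with the functional equation \eqref{functional-zeta}, which converts $\zeta(1-s\pm\tfrac{z}{2})$ back into $\zeta(s\mp\tfrac{z}{2})$ and interchanges $\alpha^{s}$ with $\alpha^{-s}\sqrt{\alpha}$. The role of the subtracted term $\tfrac{1}{2\pi}\zeta(z)x^{z/2-1}$ is precisely to remove the residue of the zero of the cosine at $w=1-\tfrac{z}{2}$ (equivalently the pole of $1/\sin$ at $s=\tfrac{z}{2}$), which is the single non-symmetric Laplace contribution (a multiple of $\alpha^{(1-z)/2}$) that would otherwise destroy the $\alpha\mapsto1/\alpha$ symmetry.

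I expect the main obstacle to be exactly this last reconciliation and its residue bookkeeping. The $\Xi$-derived integral sits on $\realpart s=\tfrac12$ and carries $\alpha^{s}$, while the Laplace-derived one naturally carries $\alpha^{-s}$ on a shifted contour dictated by the competing conditions $\realpart(z/2-w+1)>0$ (convergence of the Laplace integral) and $\realpart w>1\pm\realpart(z/2)$ (validity of \eqref{Mellin-omega}); one must move the line carefully and verify that the \emph{only} pole crossed is the $1/\sin$ pole at $s=\tfrac{z}{2}$ accounted for by the subtraction, with no spurious poles of $\Gamma(s+\tfrac{z}{2})$ or further sine zeros entering. Pinning down the overall constant and this single correction term so that the outcome matches $\Omega(x,z)$ as given in Proposition~\ref{propomega} is the delicate part; convergence of all the integrals and the legitimacy of interchanging summation and integration are routine consequences of Stirling's estimate \eqref{strivert} and absolute convergence, exactly as in the proofs already given.
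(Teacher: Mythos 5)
Your proposal is correct and follows essentially the same route as the paper's proof: it converts the $\Xi$-integral into a contour integral via \eqref{formula-210} (with the same factorization $\phi$, up to a constant), and matches it against the Laplace-type integral by shifting the Mellin representation \eqref{Mellin-omega} of $\Omega(x,z)$ across the single pole at $w=1-\tfrac{z}{2}$, whose residue is exactly the subtracted term $\tfrac{1}{2\pi}\zeta(z)x^{z/2-1}$ — precisely the paper's identity \eqref{omegamellin1} — before concluding by the invariance under $\alpha\mapsto 1/\alpha$. The only differences are cosmetic: the paper expands the $\xi$-factors at the reflected arguments $1-s\pm\tfrac{z}{2}$, so its two contour integrals agree on the nose without a separate appeal to the functional equation of $\zeta$, it cites Parseval's formula \eqref{parseval-2} instead of an interchange of integrals, and it adds a final analytic-continuation step from $0<\realpart{z}<1$ to $-1<\realpart{z}<1$.
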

\begin{proof}
The theorem is proven first for $0<$ Re $z<1$ and later extend it to 
$-1<$ Re $z<1$ by analytic continuation. Using (\ref{formula-210}) 
with $\phi(z, s)=\frac{1}{2s+z+1}\Gamma\left(\frac{z-1}{4}+\frac{s}{2}\right)$ and the reflection and the duplication formulas for the gamma function, the integral on the extreme right above (say $M(z, \alpha)$) can be written as
\begin{align}\label{krg1}
M(z,\alpha):=\frac{\sqrt{\pi}}{2^{1+\frac{z}{2}}i\sqrt{\alpha}}\int_{\tfrac{1}{2}-i\infty}^{\tfrac{1}{2}+i\infty}\Gamma\left(1-s+\frac{z}{2}\right)\frac{\zeta\left(1-s+\frac{z}{2}\right)\zeta\left(1-s-\frac{z}{2}\right)}{2\cos\left(\frac{1}{2}\pi\left(s+\frac{z}{2}\right)\right)}\left(\frac{1}{2\pi\alpha}\right)^{-s}\, ds.
\end{align}
To use \eqref{parseval-2}, one needs to evaluate the inverse Mellin transforms of the two functions, namely $\zeta\left(1-s+\frac{z}{2}\right)\zeta\left(1-s-\frac{z}{2}\right)/\left(2\cos\left(\frac{1}{2}\pi\left(s+\frac{z}{2}\right)\right)\right)$ and $\Gamma\left(1-s+\frac{z}{2}\right)$ in a common region which includes the vertical line Re $s=\tfrac{1}{2}$. 

For $c= \realpart{s}<1\pm \realpart{\left(\frac{z}{2}\right)}$, using (\ref{Mellin-omega}) and invoking the residue theorem results in
\begin{align}\label{omegamellin1}
\frac{1}{2\pi i}\int_{c-i\infty}^{c+i\infty}\frac{\zeta\left(1-s+\frac{z}{2}\right)\zeta\left(1-s-\frac{z}{2}\right)}{2\cos\left(\frac{1}{2}\pi\left(s+\frac{z}{2}\right)\right)}x^{-s}\, ds=\Omega(x, z)-\frac{\zeta(z)}{2\pi}x^{\frac{z}{2}-1}.
\end{align}
Also for $c=$ Re $s<1+$ Re $\left(\frac{z}{2}\right)$,
\begin{equation}
\frac{1}{2\pi i}\int_{c-i\infty}^{c+i\infty}\Gamma\left(1-s+\frac{z}{2}\right)x^{-s}\, ds=e^{-\frac{1}{x}}x^{-1-\frac{z}{2}}.
\end{equation}
Since $0<$ Re $z<1$, shifting the line of integration to 
$c= \realpart{s}<1- \realpart{\left(\frac{z}{2}\right)}$ does not 
introduce a pole. Therefore the above formula is valid for 
$c= \realpart{s}<1\pm \realpart{\left(\frac{z}{2}\right)}$. Also, the 
choice $c = \tfrac{1}{2}$ is valid since $0< \realpart{z}<1$. Thus 
employing (\ref{parseval-2}), gives 
\begin{align}\label{krg2}
&\int_{\tfrac{1}{2}-i\infty}^{\tfrac{1}{2}+i\infty}\Gamma\left(1-s+\frac{z}{2}\right)\frac{\zeta\left(1-s+\frac{z}{2}\right)\zeta\left(1-s-\frac{z}{2}\right)}{2\cos\left(\frac{1}{2}\pi\left(s+\frac{z}{2}\right)\right)}\left(\frac{1}{2\pi\alpha}\right)^{-s}\, ds\nonumber\\
&=2\pi i\int_{0}^{\infty}e^{-2\pi\alpha x}(2\pi\alpha x)^{1+\frac{z}{2}}\left(\Omega(x,z)-\frac{\zeta(z)}{2\pi}x^{\frac{z}{2}-1}\right)\, \frac{dx}{x}.
\end{align}
Therefore (\ref{krg1}) and (\ref{krg2}) establish 
the equality between the extreme left and right sides of 
Theorem \ref{thm-ram00}.
Now using \eqref{strivert} and known bounds on the Riemann zeta function, it 
is easy to see that $M(z, \alpha)$ is analytic in $-1<$ Re $z<1$. Similarly, 
using Proposition \ref{propomega}, it is easy to see that the extreme left 
side of Theorem \ref{thm-ram00} is analytic in $-1<$ Re $z<1$. Thus by 
analytic continuation, the equality holds for $-1<$ Re $z<1$. Now, as 
usual, replace $\alpha$ by $\beta$ in the established identity and use 
the relation $\alpha\beta=1$ to obtain the second equality 
in Theorem \ref{thm-ram00}.
\end{proof}
A generalization of \eqref{jbessel} is stated next. 
\begin{theorem}
\label{thm-ram01}
Assume $-1 < \realpart{z} < 1$. Then for $\alpha, \beta>0, \alpha\beta=1$,
\begin{multline*}
\sqrt{\alpha^{3}} \int_{0}^{\infty} x J_{z/2}( 2 \pi \alpha x) 
\left( \Omega(x,z) - \frac{1}{2 \pi} \zeta(z) x^{z/2-1} \right) \, dx\\
=\sqrt{\beta^{3}} \int_{0}^{\infty} x J_{z/2}( 2 \pi \beta x) 
\left( \Omega(x,z) - \frac{1}{2 \pi} \zeta(z) x^{z/2-1} \right) \, dx\\
=\frac{8}{\pi^{3}} 
\int_{0}^{\infty} \frac{
\Gamma \left( \frac{z+ 3 + it}{4} \right)
\Gamma \left( \frac{z+ 3 - it}{4} \right)
}{
\Gamma \left( \frac{z+ 1 + it}{4} \right)
\Gamma \left( \frac{z+ 1 - it}{4} \right)
} 
\Xi \left( \frac{t-iz}{2} \right)
\Xi \left( \frac{t+iz}{2} \right) \\
\times \frac{\cos( \tfrac{1}{2} t \log \alpha) \, dt}{(t^{2} + (z+1)^{2})
(t^{2} + (z-1)^{2})}.
\end{multline*}
\end{theorem}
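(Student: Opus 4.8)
The plan is to establish the double equality exactly as Theorem~\ref{thm-ram00} was proved, with the exponential kernel $e^{-2\pi\alpha x}$ replaced by the Bessel kernel $xJ_{z/2}(2\pi\alpha x)$. The self-reciprocity apparatus of Theorems~\ref{production-1} and~\ref{production-2} is not directly usable, since Corollaries~\ref{cor5.4} and~\ref{cor5.6} produce transforms against $K_{z/2}$ and not against $J_{z/2}$; I would therefore evaluate the $\Xi$-integral on the right directly through \eqref{formula-210} and Parseval. First I would observe that this integrand factors as in \eqref{factorization-0}, namely $f(z,t/2)=\phi(z,it/2)\phi(z,-it/2)$ with
\[
\phi(z,s)=\frac{c}{2s+z+1}\,\frac{\Gamma\!\left(\frac{z-1}{4}+\frac{s}{2}\right)}{\Gamma\!\left(\frac{z+1}{4}+\frac{s}{2}\right)},
\]
the constant $c$ and a power of $\pi$ being fixed by the relation $\Gamma(w+1)=w\Gamma(w)$, which turns $\Gamma\left(\frac{z-1\pm it}{4}\right)$ into $\Gamma\left(\frac{z+3\pm it}{4}\right)$ and simultaneously generates the denominator $(t^{2}+(z+1)^{2})(t^{2}+(z-1)^{2})$.

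Applying \eqref{formula-210}, the right-hand integral becomes a contour integral over $\realpart s=\tfrac12$ with integrand $\phi(z,s-\tfrac12)\phi(z,\tfrac12-s)\,\xi(s-\tfrac z2)\,\xi(s+\tfrac z2)\,\alpha^{s}$. Expanding each $\xi$ via \eqref{def-xi}, the gamma factors of $\phi$ partly cancel and partly combine, through the reflection and duplication formulas, with the $\Gamma\left(\frac s2\pm\frac z4\right)$ arising from $\xi$; the net result is the gamma quotient that, by \eqref{Mellin-besselJ}, is the Mellin transform of $xJ_{z/2}(2\pi\alpha x)$ at $1-s$. Meanwhile the functional equation \eqref{functional-zeta} (equivalently \eqref{zetafe}), applied to both $\zeta(s-\tfrac z2)$ and $\zeta(s+\tfrac z2)$, turns them into $\zeta(1-s+\tfrac z2)\zeta(1-s-\tfrac z2)$ and, after the accompanying sines are absorbed into the gamma reflection, leaves the factor $2\cos\left(\tfrac\pi2\left(s+\tfrac z2\right)\right)$ in the denominator; by \eqref{Mellin-omega} this is the Mellin transform of $\Omega(x,z)-\frac{\zeta(z)}{2\pi}x^{z/2-1}$. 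Parseval's identity \eqref{parseval-1} then recombines the two transforms into $\sqrt{\alpha^{3}}\int_{0}^{\infty}xJ_{z/2}(2\pi\alpha x)\left(\Omega(x,z)-\frac{\zeta(z)}{2\pi}x^{z/2-1}\right)dx$, which is the equality between the extreme left and right members.

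Following Theorem~\ref{thm-ram00}, I would first carry this through for $0<\realpart z<1$, where the two Mellin transforms share a common strip of analyticity, and then extend to $-1<\realpart z<1$ by analytic continuation: the $\Xi$-integral is analytic there by the Stirling estimate \eqref{strivert} together with standard bounds on $\zeta$, and the left-hand side is analytic there by Proposition~\ref{propomega}. The remaining equality is immediate from the invariance of the right-hand $\Xi$-integral under $\alpha\mapsto1/\alpha$, that is, by replacing $\alpha$ by $\beta$ and using $\alpha\beta=1$; the case $z=0$ recovers \eqref{jbessel} since $\Omega(x,0)=\Omega(x)$.

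The main obstacle is the bookkeeping of the strips of validity. The Mellin transform of $xJ_{z/2}(2\pi\alpha x)$ supplied by \eqref{Mellin-besselJ} is valid, after the reflection $s\mapsto1-s$, only for $\realpart s>\tfrac12$, whereas \eqref{Mellin-omega} requires $\realpart s<1-\realpart\tfrac z2$, so the contour $\realpart s=\tfrac12$ handed over by \eqref{formula-210} lies on the boundary of the common strip; I would shift it slightly to the right and use the entirety of $\xi$ to confirm no pole is crossed (if a pole at $s=1\pm z/2$ were in fact encountered, as in the second proof of Theorem~\ref{thm-45}, its residue would need to be computed). The one genuinely delicate identity is to check that the combined reflection and duplication manipulation yields exactly the single cosine of \eqref{Mellin-omega} rather than the naive product of two sines coming from the zeta functional equation.
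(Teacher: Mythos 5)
Your proposal follows exactly the route the paper intends: its own (one-line) proof of Theorem \ref{thm-ram01} cites precisely the three ingredients you use --- the Mellin transform \eqref{Mellin-besselJ} of the $J$-Bessel kernel, the inverse Mellin transform \eqref{omegamellin1} of $\Omega(x,z)-\tfrac{\zeta(z)}{2\pi}x^{z/2-1}$, and Parseval's identity \eqref{parseval-1} --- assembled via \eqref{formula-210} on the template of the proof of Theorem \ref{thm-ram00}, including the restriction to $0<\realpart{z}<1$ followed by analytic continuation and the final $\alpha\mapsto 1/\alpha$ invariance. Your factorization $\phi(z,s)=\frac{c}{2s+z+1}\,\Gamma\!\left(\frac{z-1}{4}+\frac{s}{2}\right)/\Gamma\!\left(\frac{z+1}{4}+\frac{s}{2}\right)$ (with $c=\tfrac14$) and your handling of the boundary contour $\realpart{s}=\tfrac12$ are correct, so your sketch in effect supplies the details the paper omits.
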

\begin{proof}
The proof is based on the Mellin transform \eqref{Mellin-besselJ}, the inverse Mellin transform \eqref{omegamellin1} and \eqref{parseval-1}. Details 
are omitted.
\end{proof}

\begin{note}
The integral on the right-hand side of Theorem \ref{thm-ram00} appears in S.~Ramanujan 
\cite[Equation (20)]{ramanujan-1915a} and in \cite[Theorem 1.5]{dixit-2011b}, where alternate
representations for this integral have been given. Comparing the representation derived here with these, lead to the following identity:
\begin{multline}
\alpha^{(z+1)/2} \int_{0}^{\infty} e^{-2 \pi \alpha x} x^{z/2} 
\left( \Omega(x,z) - \frac{1}{2 \pi} \zeta(z) x^{z/2-1} \right) dx \\
 = \frac{1}{(2 \pi)^{z+1}} \int_{0}^{\infty} x^{z} 
\left( \frac{1}{e^{x \sqrt{\alpha}}-1} - \frac{1}{x \sqrt{\alpha}} \right)
\left( \frac{1}{e^{x/ \sqrt{\alpha}}-1} - \frac{1}{x/ \sqrt{\alpha}} \right)
dx \\
=  \alpha^{(z+1)/2} \frac{\Gamma(z+1)}{(2 \pi)^{z+1}} 
\left[ \sum_{n=1}^{\infty} \left( \zeta(z+1, n \alpha) - 
\frac{(n \alpha)^{-z}}{z} - \frac{(n \alpha)^{-z-1}}{2} \right) -
\frac{\zeta(z+1)}{2 \alpha^{z+1}} - \frac{\zeta(z)}{\alpha z} \right],
\end{multline}
\noindent
where $\zeta(z,x)$ is the Hurwitz zeta function. 
\end{note}

\medskip

\section{Related work of Guinand and of Nasim}
\label{sec-advantages}

The work presented here is related to results of 
Guinand and Nasim. These are presented next.  
Guinand \cite[Theorem 6]{guinand-1939a} (see 
also \cite[Equation (1)]{guinand-1955a} obtained 
the following summation formula involving $\sigma_{s}(n)$:
\begin{align}\label{guisum}
&\sum_{n=1}^{\infty}\sigma_{-s}(n)n^{\frac{s}{2}}f(n)-\zeta(1+s)\int_{0}^{\infty}x^{\frac{s}{2}}f(x)\, dx-\zeta(1-s)\int_{0}^{\infty}x^{-\frac{s}{2}}f(x)\, dx\nonumber\\
&=\sum_{n=1}^{\infty}\sigma_{-s}(n)n^{\frac{s}{2}}g(n)-\zeta(1+s)\int_{0}^{\infty}x^{\frac{s}{2}}g(x)\, dx-\zeta(1-s)\int_{0}^{\infty}x^{-\frac{s}{2}}g(x)\, dx.
\end{align}
Here $f(x)$ satisfies appropriate conditions 
(see \cite{guinand-1939a}  for details) and $g(x)$ is the 
transform of $f(x)$ with respect to the Fourier kernel
\begin{equation}\label{guifouker}
-2\pi\sin\left(\tfrac{1}{2}\pi s\right)J_{s}(4\pi\sqrt{x})-\cos\left(\tfrac{1}{2}\pi s\right)\left(2\pi Y_{s}(4\pi\sqrt{x})-4K_{s}(4\pi\sqrt{x})\right).
\end{equation}
Note that up to a constant factor, the above kernel is the same as the one used in \eqref{koshlyakov-1}. C.~Nasim \cite{nasim-1969a, nasim-1971a, nasim-1974a} also derived transformation formulas similar to \eqref{guisum}.

As an application of (\ref{guisum}), note that 
for $z$ fixed and $-1<$ Re $z<1$, one obtains \eqref{guinandf1} by taking $f(x)=K_{\frac{z}{2}}(2\pi\alpha x)$, and then using (\ref{koshlyakov-1}) with $\nu=z/2$ and (\ref{zetafe}). This 
is the simplest example of \eqref{guisum}, since here $f(x)=g(x)$. The 
disadvantage of (\ref{guisum}) is the difficulty in obtaining other explicit 
examples. Note that Guinand \cite{guinand-1955a} does 
not give any particular example of \eqref{guisum}\footnote{Guinand's 
proof of \eqref{guinandf1} is different from the one given above.}. The 
production of a function $g$ requires the explicit evaluation of the integral 
\begin{equation*}
\int_{0}^{\infty}f(y)\left(-2\pi\sin\left(\tfrac{1}{2}\pi s\right)J_{s}(4\pi\sqrt{xy})-\cos\left(\tfrac{1}{2}\pi s\right)\left(2\pi Y_{s}(4\pi\sqrt{xy})-4K_{s}(4\pi\sqrt{xy})\right)\right)\, dy.
\end{equation*}
Instead, considering transformations between two integrals, as done here, one 
can construct a variety of explicit examples. These are shown in 
Theorems \ref{thm-45}, \ref{thm-46} and \ref{fergenthm}.
Also in order to explicitly find a transformation using (\ref{guisum}), one has to start with a proper $f$. A priori, it is hard to conceive of an $f$ which 
would lead to an elegant transformation. The advantage in our case is that 
the results are obtained by 
considering known transformation formulas in the literature. Thus, one does
not have to begin with an unnatural choice of $f$.

\section{Future developments}\label{sec-dev}

The results presented here deal with  modular-type transformations which 
result from squaring the functional equation of $\zeta(s)$, or equivalently, 
those, whose associated integrals involving the Riemann $\Xi$-functions 
have $\frac{\Xi^{2}(t/2)}{(1+t^2)^{2}}$ in their integrand. These can be 
extended by taking higher powers of the functional equation, or 
equivalently, by taking $\frac{\Xi^{m}(t/2)}{(1+t^2)^{m}}$ in the integrand 
of the associated integrals. The consequences of this extension are discussed 
in a particular example.

In view of Hardy's result (\ref{hardyf}) and Koshlyakov's result (\ref{int-kosh1}), it is also possible to evaluate the 
integral 
$$\int_{0}^{\infty}\left(\frac{\Xi(t/2)}{(1+t^2)}\right)^{m}\frac{\cos\left(\frac{1}{2}t\log\alpha\right)}{\cosh\left(\frac{1}{2}\pi t\right)}\, dt,$$
and obtain the corresponding modular-type transformation. Note that 
when $m=1$, the transformation involves the series 
$$-\psi(x+1)-\gamma=\sum_{n=1}^{\infty}\left(\frac{1}{x+n}-\frac{1}{n}\right)$$
and when $m=2$, the series 
$$\sum_{n=1}^{\infty}d(n)\left(\frac{1}{x+n}-\frac{1}{n}\right).$$
\noindent
Thus for $m>2$, the transformation would involve the series 
$$\sum_{n=1}^{\infty}d_{m}(n)\left(\frac{1}{x+n}-\frac{1}{n}\right),$$
where $d_m(n)$ denotes the number of ways of expressing $n$ as a product 
of $m$ factors in which an expression with the same factors but in a 
different order is counted as different. The Dirichlet series 
for $d_m(n)$ is given by
\begin{equation}
\zeta^{m}(s)=\sum_{n=1}^{\infty}\frac{d_m(n)}{n^s}.
\end{equation}
The series $\sum_{n=1}^{\infty}d_{m}(n)\left(\frac{1}{x+n}-\frac{1}{n}\right)$ as well as the residues associated with it come from evaluating the integral 
\begin{equation}
\frac{1}{2\pi i}\int_{\frac{3}{2}-i\infty}^{\frac{3}{2}+i\infty}\frac{\pi}{\sin\pi s}\zeta^{m}(1-s)x^{-s}\, ds.
\end{equation}
Let $f$ be the function against which the above expression is 
integrated in order to obtain the transformation. In the case $m=1$, 
$f(x)=e^{-\pi\alpha^2 x^2}$ whose Mellin transform is 
$\frac{1}{2}\pi^{-s/2}\alpha^{-s}\Gamma\left(\frac{s}{2}\right)$, and 
when $m=2$, $f(x)=K_{0}(2\pi\alpha x)$ whose Mellin transform is 
$\frac{1}{4}\pi^{-s}\alpha^{-s}\Gamma^{2}\left(\frac{s}{2}\right)$. Thus, for 
$m>2$, the Mellin transform of $f$ should be a multiple of 
$\Gamma^{m}\left(\frac{s}{2}\right)$. These 
general transformations will be discussed in a future publication.

\medskip

\noindent
\textbf{Acknowledgments}. The second author acknowledges the partial 
support of NSF-DMS 1112656. The first author is a post-doctoral fellow, funded 
in part by the same grant.

\end{document}